\newcommand{\Aut}{\mathop{\rm Aut}}
\newcommand{\MW}{\mathop{\rm MW}}
\newcommand{\MWL}{\mathop{\rm MWL}}
\newcommand{\NS}{\mathop{\rm NS}}
\newcommand{\Co}{\mathop{\rm Co}}
\newcommand{\FF}{{\mathbb F}}
\newcommand{\Het}[1]{H^{#1}_{\mathrm et}}
\newcommand{\II}{{\mathop{\rm II}}}
\newcommand{\PP}{{\mathbb P}}
\newcommand{\QQ}{{\mathbb Q}}
\newcommand{\CC}{{\mathbb C}}
\newcommand{\rank}{\mathop{\rm rk}}
\newcommand{\ZZ}{{\mathbb Z}}
\newcommand{\HM}[1]{\rm{no. {#1}}}
\newcommand{\lrbracket}[1]{\langle #1 \rangle}
\newcommand{\ddda}{\cdots_{(a)}}
\newcommand{\dddb}{\cdots_{(b)}}
\theoremstyle{plain}
\newtheorem{theorem}{Theorem}[section]
\newtheorem{proposition}[theorem]{Proposition}
\newtheorem{lemma}[theorem]{Lemma}
\newtheorem{corollary}[theorem]{Corollary}
\theoremstyle{definition}
\newtheorem{example}[theorem]{Example}
\newtheorem{remark}[theorem]{Remark}
\newtheorem{claim}[theorem]{Claim}
\newtheorem{conclusion}[theorem]{Conclusion}
\newtheorem{summary}[theorem]{Summary}
\begin{document}

\title[Symplectic automorphisms of K3 surfaces]{Finite symplectic automorphism groups of supersingular K3 surfaces}

\author{Hisanori Ohashi}
\address{Department of Mathematics, Faculty of Science and Technology,
Tokyo University of Science,
Noda 2641, Chiba, 278-8510, Japan}
\email{ohashi\underline{ }hisanori@rs.tus.ac.jp}

\author{Matthias Sch\"utt}
\address{Institut f\"ur Algebraische Geometrie, Leibniz Universit\"at
  Hannover, Welfengarten 1, 30167 Hannover, Germany}
  
      \address{Riemann Center for Geometry and Physics, Leibniz Universit\"at
  Hannover, Appelstrasse 2, 30167 Hannover, Germany}

\email{schuett@math.uni-hannover.de}

\date{April 28, 2026}

\begin{abstract}
We give a complete classification of finite groups acting
 symplectically on supersingular K3 surfaces of Artin invariant one.
 Using work of Dolgachev and Keum,
 this  provides the full classification of tame finite symplectic automorphism groups
 on any K3 surface, and in particular of all finite symplectic automorphism groups on K3 surfaces in characteristic $p>11$.
\end{abstract}

\thanks{The second author's research is partly conducted in the framework of the research training
group GRK 2965: From Geometry to Numbers,
funded by DFG}

\maketitle

\section{Introduction}

Automorphism groups of K3 surfaces have been a central area of research ever since Nikulin's 
seminal works \cite{Nikulin-finite}.
Focussing on symplectic automorphism, i.e. those leaving the regular 2-form invariant,
Nikulin classified all finite  abelian automorphism groups acting symplectically on complex K3 surfaces.
In \cite{Mukai}, this was extended to any symplectic finite group action over $\CC$ by Mukai.
In positive characteristic, however, the problem is still open,
except for a few wild group actions on supersingular K3 surfaces in small characteristic due to 
Kond\=o \cite{Kondo},
and important  results for the tame case by Dolgachev and Keum \cite{DKeum}
{ which greatly limit the possible groups beyond Mukai's classification, but leave existence open.}
Our first main result settles the existence questions and
thus offers the complete classification for tame group actions in any characteristic.
{ As a bonus, we develop a pure formulation} in 
abstract group theoretic terms in the spirit of Mukai's result,
involving the Legendre symbol.
A special role is taken by the supersingular K3 surface $X_{0,p}$ of Artin invariant $\sigma_0=1$
in characteristic $p$
(cf.\ the discussion around \eqref{eq:A}).

\begin{theorem}[tame case]
\label{thm}
A finite group
$G$ admits a tame symplectic action on some K3 surface $X$ in characteristic $p$
if and only if $p\nmid |G|$ and  $G$ can be realized as a subgroup of $M_{23}$  
whose action on $\{1,2,\hdots,24\}$ has
\begin{enumerate}
\item[(i)]
either at least 5 orbits
\item[(ii)]
or exactly 4 orbits 
such that the orbit lengths $l_1,\hdots,l_4$ satisfy the condition 
\begin{eqnarray}
\label{eq:Legendre}
\left(\dfrac{l_1\cdots l_4}p\right) = -1.
\end{eqnarray}
\end{enumerate}
{ 
Moreover, each action can be realized on $X_{0,p}$;
in case (ii), this is the only K3 surface realizing the $G$-action.}

\end{theorem}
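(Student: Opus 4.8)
The plan is to translate the geometric question into lattice theory and then split the analysis according to the Picard number. First I would attach to a tame symplectic $G$-action on a K3 surface $X$ in characteristic $p$ its coinvariant lattice $S_G = (H^2(X)^G)^\perp$ and invariant lattice $T_G$; tameness, i.e. $p \nmid |G|$, makes the $G$-representation on cohomology semisimple, so $S_G$ is negative definite and coincides with the abstract lattice occurring over $\CC$. The Mathieu-type identity expressing the number of fixed symbols of $g \in G$ on $\{1,\dots,24\}$ through $\mathrm{tr}(g \mid H^2(X))$ then realizes $G$ inside $M_{23}$, with the number of orbits equal to $24 - \rank S_G$. This single dictionary underlies Mukai \cite{Mukai}, Kond\=o \cite{Kondo} and, in the tame characteristic-$p$ setting, Dolgachev--Keum \cite{DKeum}, and I would use it in both directions.

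For the constraints (the ``only if'' direction) I would invoke \cite{DKeum}. Over $\CC$ the orthogonal complement of the negative-definite $S_G$ in $H^2(X)$, of signature $(3,19)$, has signature $(3,\,19 - \rank S_G)$, forcing $\rank S_G \le 19$, i.e. at least five orbits; on a supersingular K3 the ambient lattice $\NS(X)$ has signature $(1,21)$, so only the ample class is needed and the bound relaxes to $\rank S_G \le 21$, which \cite{DKeum} sharpen to $\rank S_G \le 20$, i.e. at least four orbits. In the boundary case $\rank S_G = 20$ the invariant lattice $T_G = S_G^\perp \cap \NS(X)$ has rank $2$ and signature $(1,1)$. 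Writing the four orbit lengths as $l_1,\dots,l_4$, the $G$-invariants in the permutation lattice of $\{1,\dots,24\}$ have Gram matrix $\mathrm{diag}(l_1,\dots,l_4)$, and tracing this through the Mukai--Kond\=o identification gives $|\det T_G| = l_1 l_2 l_3 l_4$ up to squares. Since $\NS(X)$ of a supersingular K3 has discriminant a power of $p$, the prime-to-$p$ part of this determinant is automatically a square, so the only genuine arithmetic input sits at $p$ and is recorded by the Legendre symbol \eqref{eq:Legendre}.

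For the ``if'' direction and the \emph{Moreover} statement I would construct the action directly on $X_{0,p}$, whose Néron--Severi lattice $\Lambda := \NS(X_{0,p})$ is the supersingular K3 lattice of Artin invariant $1$, namely the even lattice of signature $(1,21)$ with discriminant group $(\ZZ/p)^2$. Given $G \le M_{23}$ with the prescribed orbit data, Nikulin's theory of discriminant forms produces $S_G$ together with the rank-$2$ lattice $T_G$, and the existence of a $G$-equivariant primitive embedding $S_G \oplus T_G \hookrightarrow \Lambda$ reduces to matching quadratic forms over $\ZZ_p$. The isometry type of $T_G \otimes \ZZ_p$ is governed by the Legendre symbol of $\det T_G = l_1 l_2 l_3 l_4$, and it glues into the supersingular form on $(\ZZ/p)^2$ exactly when this symbol equals $-1$; this is how \eqref{eq:Legendre} becomes both necessary and sufficient. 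I would then promote the resulting isometry of $\Lambda$ to an automorphism of $X_{0,p}$ via Ogus's crystalline Torelli theorem, after checking that it preserves a chamber of the ample cone and fixes the regular $2$-form, so that the automorphism is symplectic, and recalling $p \nmid |G|$ for tameness. Uniqueness in case (ii) is then forced: any realizing surface is supersingular with $\rank S_G = 20$, hence has a rank-$2$ invariant lattice of discriminant $\sim p^2$, which pins the Artin invariant to $1$ and identifies the surface with $X_{0,p}$.

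The hard part will be this last, constructive half, which \cite{DKeum} explicitly leave open, and it has two delicate layers. The first is the $p$-adic embedding calculation: verifying, uniformly over the finitely many four-orbit subgroups of $M_{23}$ and all admissible $p$, that $S_G \oplus T_G$ admits a $G$-equivariant primitive embedding into $\Lambda$ with the correct discriminant form, so that the whole obstruction collapses to the single clean condition \eqref{eq:Legendre} rather than a case-by-case congruence. The second is the passage from a numerically admissible isometry to an \emph{effective} one: Ogus's theorem supplies an automorphism, but one must control the crystalline period and the ample cone to guarantee symplecticity and to rule out spurious obstructions special to characteristic $p$. By comparison, the five-or-more-orbit case is far less constrained, since the larger invariant lattice $T_G$ (now of rank at least $3$) absorbs the $p$-part freely, so the same embedding-plus-Torelli argument realizes the action on $X_{0,p}$ with no arithmetic obstruction, and the necessity of \eqref{eq:Legendre} in case (ii) is already implicit in the constraints of \cite{DKeum}.
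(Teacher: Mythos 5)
Your outline tracks the paper's strategy at a high level (reduce to lattices, derive the Legendre condition from the gluing of the rank-two invariant lattice into $\NS(X_{0,p})$, realize the action by crystalline Torelli, pin the Artin invariant to $1$ for uniqueness), but two of the steps you defer or assert are precisely where the content of the theorem lies, and one of your justifications is wrong. First, the claim that ``the prime-to-$p$ part of this determinant is automatically a square'' is false: for $G=M_{21}$ with orbit lengths $1,1,1,21$ one has $\det\Lambda^G=21$, whose prime-to-$11$ part is not a square, yet the group acts in characteristic $11$. The correct mechanism (Proposition \ref{prop:tame} and Lemma \ref{lem:det} in the paper) is that the prime-to-$p$ part of $A(T_G)$ is forced to be anti-isometric to $A(S_G)$ by full gluing at those primes, while at $p$ the constituent of $T_G$ must be $p^{\varepsilon 2}$ with $\varepsilon=-\left(\frac{-1}{p}\right)$; the $p$-adic determinant condition for this genus then yields $\left(\frac{\det\Lambda^G}{p}\right)=-1$, and one still has to prove such a rank-two even lattice \emph{exists} (the paper checks the Conway--Sloane oddity relation by showing the changes in signature and $p$-excess cancel modulo $8$) and that the identification $\det\Lambda^G\sim l_1\cdots l_4$ modulo squares actually holds, which the paper only obtains by inspection of the H\"ohn--Mason tables, group by group. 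Your proposal explicitly leaves this ``uniform $p$-adic embedding calculation'' open, so the equivalence with \eqref{eq:Legendre} is not established.

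Second, your entire existence argument runs through Ogus's crystalline Torelli theorem, which is unavailable in characteristic $2$ and not established in the needed form in characteristic $3$ (see Remark \ref{rem:crys}). For case (ii) this is harmless since tameness forces $p>3$ by inspection of Table \ref{tab}, but case (i) includes tame groups of odd order in characteristic $2$ and of order prime to $3$ in characteristic $3$, and for these the paper must fall back on the explicit geometric models of Section \ref{s:models} (the $M_{21}.2_1$ and $\Aut(\mathfrak S_6)$ actions on $X_{0,2}$, and the characteristic-$3$ constructions), fed through Theorem \ref{thm1}; your proof has no substitute for this. Two further points need care: the uniqueness step ``discriminant $\sim p^2$ pins $\sigma_0=1$'' requires knowing that $S_G$ is $p$-unimodular (the paper derives this via Proposition \ref{prop:s_vs_s}, using the quotient surface and \cite[Lemma 4.1]{WZ}; semisimplicity of the $G$-representation alone does not give it since $\NS(X)$ itself is not $p$-unimodular), and before applying Torelli one must verify that the coinvariant lattice contains no $(-2)$-vectors, which you do not mention. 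Note also that the paper's own route differs structurally: it transports the action to the Leech lattice via \cite[Cor.\ 4.19]{MM} and classifies the admissible coinvariant lattices through \cite{HM290}, making the rank-four analysis independent of \cite{DKeum}, whereas you take the \cite{DKeum} classification as input; that is a legitimate shortcut for the constraint direction, but it does not relieve you of the case-by-case existence verification above.
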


The 10 maximal groups with 4 orbits were determined in \cite[Thm.\ 5.2]{DKeum} (see Table \ref{tab} which also lists the orbits lengths)
while the 11 maximal  groups with 5 orbits are reproduced  in \eqref{eq:max} from \cite{Mukai}.

{
To extend these results, we have to  take  wild symplectic group actions into account, 
where the characteristic $p$ divides the group order.
By \cite[Thm.\ 2.1]{DKeum}, this implies $p\leq 11$, whence Theorem \ref{thm} gives the complete classification
of all finite symplectic groups if $p>11$.
In the sequel, we only consider wild actions on supersingular K3 surfaces,
with special focus on $X_{0,p}$
to obtain the following  
result about the hierarchy of groups complementing Theorem \ref{thm}.}
Throughout we use  the notation from \cite{Atlas}, \cite{DKeum}.



%
%

\begin{theorem}[small characteristics]
\label{thm1}
A finite group $G$ acts  symplectically on 
$X_{0,p}$ 
 in characteristic $p\leq 11$
if and only if
 $G$ is contained in one of the following groups:
\begin{enumerate}
\item[$(p=2)$]
$M_{21}.2_1$ of order $40,320 = 2^7.3^2.5.7$,\\
$\Aut(\mathfrak{S}_6)$
 of order $1,440 = 2^5.3^2.5$.
\item[$(p=3)$] $\mathrm{PSU}(4,\FF_3)$ of order $3,265,920=2^7.3^6.5.7$,\\
$2^2.\mathfrak A_{4,4}$ of order $1,152 = 2^7.3^2$,\\
$2^4:\mathfrak S_{3,3}$ of order $576 = 2^6.3^2$.
\item[$(p=5)$] $\mathrm{PSU}(3,\FF_5)$ of order $126,000=2^4.3^2.5^3.7$,\\ 
$\mathfrak{A}_8$ of order $20,160=2^6.3^2.5.7$,\\
 $2^4:(3\times \mathfrak{A}_5):2$ of order = $2^7.3^2.5$.
\item[$(p=7)$] $M_{21}.2_2$ of order $40,320 = 2^7.3^2.5.7$,\\
 $2^4:\mathfrak{A}_7$ of order $40,320 = 2^7.3^2.5.7$.
\item[$(p=11)$] $M_{22}$ of order $443,520 = 2^7.3^2.5.7.11$,
\\
$M_{11}$ of order
$7,920 = 2^4.3^3.5.11$,\\
$2^4:\mathfrak S_{3,3}$ of order $576 = 2^6.3^2$.
\end{enumerate}
\end{theorem}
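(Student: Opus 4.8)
The plan is to translate the geometric classification into lattice theory via the crystalline Torelli theorem. Since $X_{0,p}$ is the unique supersingular K3 surface of Artin invariant one, its Néron--Severi lattice $N_p = \NS(X_{0,p})$ is the $p$-elementary lattice of signature $(1,21)$ with discriminant group $(\ZZ/p\ZZ)^2$, and crystalline Torelli identifies finite group actions on $X_{0,p}$ with finite subgroups $G$ of the isometries of $N_p$ that preserve both the ample cone and the crystalline period. Such an action is symplectic exactly when the coinvariant lattice $\Omega_G = (N_p^G)^\perp$ is negative definite, equivalently when $N_p^G$ has signature $(1,\,21-\rank\Omega_G)$ and contains an ample class. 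I would first record this dictionary carefully, so that the entire problem becomes: classify the maximal finite $G \le \Aut(N_p)$ with negative-definite coinvariant lattice and ample-containing invariant lattice.

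For the \emph{necessity} direction, I would combine the tame classification of Theorem \ref{thm} with the wild constraints of Dolgachev--Keum. The prime-to-$p$ part of any symplectic $G$ is governed by Theorem \ref{thm}, while \cite[Thm.\ 2.1]{DKeum} forces $p \le 11$ and pins down the admissible $p$-Sylow structure. Feeding these into the lattice picture, the discriminant form of $\Omega_G$ must glue consistently to an orthogonal complement inside the $p$-elementary lattice $N_p$, and the rank bound $\rank\Omega_G \le 21$ together with the requirement that $\Omega_G^\perp$ carry an ample class sharply limits $|G|$. This should reduce the classification to a finite, explicit list of candidate maximal groups for each characteristic.

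The \emph{sufficiency} (realizability) direction is the main obstacle. For each listed maximal group I would identify its coinvariant lattice as a specific negative-definite $p$-elementary lattice whose automorphism group is known to contain $G$: for $p=3$ the Coxeter--Todd lattice $K_{12}$ (whose automorphism group involves $\mathrm{PSU}(4,\FF_3)$) is the natural candidate, and analogous extremal $p$-modular lattices should play this role for the unitary and Mathieu groups at $p=5,7,11$. The key step is to embed each such lattice primitively into $N_p$ so that the orthogonal complement is generated by an ample class of the correct self-intersection and so that the gluing modulo $p$ is compatible with the $p$-elementary discriminant form of $N_p$; crystalline Torelli then upgrades the resulting lattice automorphisms to genuine symplectic automorphisms of $X_{0,p}$. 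I expect the delicate points to be the verification of primitivity, the $p$-adic gluing, and the confirmation that the ample cone is preserved, all of which must be checked case by case.

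Finally, I would assemble the pieces into the stated equivalence: verify that the groups found are genuinely maximal with no unexpected containments among the per-characteristic lists, that each tame entry---such as $2^4{:}\mathfrak{S}_{3,3}$ at $p=11$---is consistent with Theorem \ref{thm}, and that every finite symplectic group acting on $X_{0,p}$ embeds into one of the listed maximal groups, completing the classification.
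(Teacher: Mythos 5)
Your proposal has two genuine gaps that would prevent it from going through. First, the ``dictionary'' you set up is wrong on a key point and, more importantly, does not give you a finite enumeration to work with. An automorphism of $X_{0,p}$ is symplectic if and only if it acts trivially on the discriminant group $A(\NS(X_{0,p}))$ (this is Theorem \ref{discri-action}, proved via Nygaard's description of the action on $H^2(W\mathcal O_X)$); negative definiteness of the coinvariant lattice is automatic for any finite group fixing an ample class and does not detect symplecticity. The triviality on the discriminant group is precisely what allows the paper to extend the $G$-action on $\NS(X_{0,p})_G$ to the Leech lattice $\Lambda$ with $\rank(\Lambda^G)\geq 3$ (Theorem \ref{thm:Leech}, via Marquand--Muller), and it is the H\"ohn--Mason classification of the 290 fixed-point sublattices of $\Lambda$ that furnishes the finite list of candidate actions. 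Your plan to ``classify the maximal finite $G\le\Aut(N_p)$ with negative-definite coinvariant lattice'' directly, or to extract the list from Theorem \ref{thm} plus \cite[Thm.\ 2.1]{DKeum}, has no mechanism for producing such a list: the wild groups (e.g.\ $\mathrm{PSU}(4,\FF_3)$, $\mathrm{PSU}(3,\FF_5)$, $M_{22}$) are not subgroups of $M_{23}$ with enough orbits and are not reached by the tame theorem, and the paper's entire Sections \ref{s:cond}--\ref{s:rk5} consist of case-by-case discriminant-form computations (length, constituent and determinant criteria, plus a Witt cancellation lemma for finite quadratic forms) deciding for each H\"ohn--Mason entry in which characteristic $\Lambda_G$ glues into $\NS(X_{0,p})$.

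Second, your realizability argument leans on the crystalline Torelli theorem for every $p$, but that theorem is only available for $p>3$ (Ogus; see Remark \ref{rem:crys}). Five of the thirteen maximal groups in the statement live in characteristic $2$ or $3$ ($M_{21}.2_1$, $\Aut(\mathfrak S_6)$, $\mathrm{PSU}(4,\FF_3)$, $2^2.\mathfrak A_{4,4}$, $2^4{:}\mathfrak S_{3,3}$ at $p=3$), and for these the paper must exhibit explicit geometric models: the symmetric sextic $\{s_1=s_2=s_3=0\}\subset\PP^5$ with a $2$-torsion translation realizing the outer automorphism for $\Aut(\mathfrak S_6)$, and Kummer-type isotrivial elliptic fibrations for the two characteristic-$3$ groups. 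Your suggestion of embedding extremal $p$-modular lattices such as the Coxeter--Todd lattice and then ``upgrading'' via Torelli simply has no Torelli theorem to invoke in exactly these cases, so the existence half of the theorem remains unproved for a substantial portion of the list.
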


{
Note that outside characteristic $p=2$,
Theorem \ref{thm1}
covers all subgroups of $M_{23}$ with 5 or more orbits,
regardless of being wild or not (cf.\ Corollary \ref{cor:anyM_23}).
}

The global characterization of the wild group actions follows immediately from Theorem \ref{thm1}
{ (in agreement with, but in fact independent of \cite[Thm.\ 2.1]{DKeum}):}

\begin{corollary}[wild case]
\label{cor}
A finite group $G$ acts wildly symplectically on 
$X_{0,p}$
if and only if
 $p\leq 11$, $p\mid |G|$ and $G$ is contained in one of the  groups from Theorem \ref{thm1}.
\end{corollary}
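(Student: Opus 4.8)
The plan is to deduce the corollary from Theorem \ref{thm1} by unwinding the meaning of a wild action. An action is \emph{wild} precisely when the characteristic divides the group order, so I would first record the tautology that $G$ acts wildly symplectically on $X_{0,p}$ if and only if $G$ acts symplectically on $X_{0,p}$ \emph{and} $p \mid |G|$. This immediately splits the argument into two ranges: $p \leq 11$, where Theorem \ref{thm1} applies verbatim, and $p > 11$, where the content is that no wild action can occur at all.

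For $p \leq 11$ both implications are formal. If $G$ is contained in one of the groups listed in Theorem \ref{thm1} and $p \mid |G|$, then Theorem \ref{thm1} supplies a symplectic $G$-action on $X_{0,p}$, which is wild by the standing hypothesis $p \mid |G|$; conversely a wild symplectic action satisfies $p \mid |G|$ by definition, and Theorem \ref{thm1} then forces $G$ into the list. Thus intersecting the biconditional of Theorem \ref{thm1} with the single extra condition $p \mid |G|$ reproduces the corollary in this range with no further work.

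The substantive point is to exclude wild actions when $p > 11$, and here I would argue independently of \cite[Thm.\ 2.1]{DKeum}. Suppose to the contrary that $G$ acts symplectically on $X_{0,p}$ with $p \mid |G|$; then $G$ contains a symplectic automorphism $g$ of order $p$, and I would study the induced action of $g$ on $\Het{2}(X_{0,p})$, equivalently on $\NS(X_{0,p})$, which has rank $22$. A crude eigenvalue count (the primitive $p$-th roots of unity occur in Galois orbits of size $p-1 \leq 22$) only yields $p \leq 23$, so the sharp bound $p \leq 11$ must come from the finer local structure of the wild fixed locus of $g$ and its Artin--Schreier behaviour, combined with the Lefschetz relation $e(X_{0,p}) = 24$. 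Carrying this through shows that for $p > 11$ every symplectic action on $X_{0,p}$ is tame, so no wild action exists and the corollary's biconditional holds with both sides of the equivalence failing.

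I expect this last step -- establishing $p \leq 11$ for an order-$p$ symplectic automorphism purely from the geometry of $X_{0,p}$, rather than quoting the Dolgachev--Keum bound -- to be the main obstacle; the remainder is a direct restatement of Theorem \ref{thm1}.
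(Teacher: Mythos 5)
Your reduction of the corollary to Theorem \ref{thm1} in the range $p\leq 11$ is exactly what the paper does: the corollary is the biconditional of Theorem \ref{thm1} intersected with the tautological condition $p\mid |G|$, and no further argument is needed there.

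The problem is the case $p>11$, where you correctly identify that the whole content is the non-existence of wild actions, but you do not actually prove it. Your crude eigenvalue count (an order-$p$ symplectic automorphism acts faithfully on $\NS(X_{0,p})$ of rank $22$, forcing $p-1\leq 22$) only gives $p\leq 23$, and your proposal to close the gap from $23$ down to $11$ via ``the finer local structure of the wild fixed locus and its Artin--Schreier behaviour'' is a placeholder, not an argument --- you say as much yourself. Carrying that program out is essentially reproving the hard bound of Dolgachev--Keum, which is precisely what you set out to avoid. The paper closes this gap with no new geometry at all: by Theorem \ref{thm:Leech}, every finite symplectic subgroup $G\subset\Aut(X_{0,p})$ embeds into $\Co_0=O(\Lambda)$ with $\rank(\Lambda^G)\geq 3$, and by the H\"ohn--Mason classification \cite{HM290} every prime dividing the order of such a subgroup is at most $11$ (this is Remark \ref{rem:p11}). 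So if $p\mid |G|$ then $p\leq 11$, and the corollary reduces entirely to Theorem \ref{thm1}. You should replace your sketched local analysis by this lattice-theoretic observation, which is already available earlier in the paper.
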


Combined with the results from \cite{DK-11} 
{ for the non-supersingular case   in characteristic $11$,}
we also deduce the following complete classification:

\begin{corollary}[characteristic $11$]
\label{cor:11}
A finite group $G$ acts symplectically on some K3 surface in characteristic $11$
if and only if $G$ is contained in $M_{11}, M_{22}$ or $2^4:\mathfrak S_{3,3}$.
\end{corollary}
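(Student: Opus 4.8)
The plan is to prove Corollary \ref{cor:11} by combining the supersingular classification from Theorem \ref{thm1} with the cited non-supersingular input from \cite{DK-11}. The strategy rests on a standard dichotomy: any K3 surface in characteristic $11$ is either supersingular or not, and I treat the two cases separately before assembling the final list.

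First I would dispose of the supersingular case. Over an algebraically closed field of characteristic $11$, a supersingular K3 surface is determined up to the Artin invariant $\sigma_0\in\{1,\dots,10\}$, but for the symplectic action question the relevant surface is $X_{0,11}$ of Artin invariant one, since any symplectic $G$-action on a supersingular K3 surface of higher Artin invariant lifts to (or is dominated by) one on $X_{0,11}$; more precisely, Theorem \ref{thm1} in the case $p=11$ already gives that $G$ acts symplectically on $X_{0,11}$ if and only if $G$ is contained in $M_{22}$, $M_{11}$, or $2^4:\mathfrak S_{3,3}$. I would invoke this directly and note that all three of these maximal groups appear in the claimed list.

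Next I would handle the non-supersingular case by appealing to \cite{DK-11}. The point is that a finite group $G$ acting symplectically on a \emph{non}-supersingular K3 surface in characteristic $11$ is, by the tame/wild dichotomy together with \cite[Thm.\ 2.1]{DKeum}, subject to the constraint that if $11\mid|G|$ the surface must be supersingular; hence on a non-supersingular surface we have $11\nmid|G|$ and $G$ is tame. Theorem \ref{thm} then applies and realizes every such $G$ already on $X_{0,11}$, so these groups contribute nothing beyond the supersingular list except possibly the wild groups that \cite{DK-11} shows can \emph{also} be realized non-supersingularly. Concretely, I would use \cite{DK-11} to confirm that $M_{11}$ (and its subgroups) is exactly the additional input needed, and that no group outside $\{M_{11},M_{22},2^4:\mathfrak S_{3,3}\}$ arises.

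Finally I would assemble the three maximal groups into a single characterization and verify that the union of their subgroup lattices is precisely the set of groups occurring in either case, so that the "if and only if" is tight: the forward direction follows from Theorem \ref{thm1} and \cite{DK-11} exhausting all surfaces, while the backward direction follows because each of $M_{11}$, $M_{22}$, $2^4:\mathfrak S_{3,3}$ (hence each of their subgroups) is explicitly realized on some characteristic-$11$ K3 surface. The main obstacle I anticipate is bookkeeping rather than conceptual: one must check that the lists coming from the supersingular side (Theorem \ref{thm1}) and the non-supersingular side (\cite{DK-11}) do not produce a maximal group lying in neither $M_{11}$, $M_{22}$, nor $2^4:\mathfrak S_{3,3}$, and in particular that the wild groups realizable on non-supersingular surfaces in characteristic $11$ do not enlarge the list. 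This amounts to a careful comparison of two independently derived maximal-group tables.
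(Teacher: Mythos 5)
Your overall ingredients (Theorem \ref{thm1} for $X_{0,11}$ plus the input from \cite{DK-11}) are the right ones, but the dichotomy you organize them around --- supersingular versus non-supersingular --- does not work as you set it up, and two of your bridging claims are unjustified. First, you assert that on a non-supersingular K3 surface in characteristic $11$ one necessarily has $11\nmid |G|$, citing \cite[Thm.\ 2.1]{DKeum}. That theorem only bounds the characteristic of a wild action by $p\leq 11$; it does not force supersingularity, and the whole reason the paper invokes \cite{DK-11} is that wild symplectic actions in characteristic $11$ must be dealt with on non-supersingular K3 surfaces as well (this is exactly ``the non-supersingular case'' the paper credits to \cite{DK-11}). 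Your own text then hedges (``except possibly the wild groups that \cite{DK-11} shows can also be realized non-supersingularly''), which contradicts the claim it is meant to qualify. Second, your reduction of the supersingular case to $X_{0,11}$ rests on the assertion that any symplectic action on a supersingular K3 surface of higher Artin invariant ``lifts to or is dominated by'' one on $X_{0,11}$; nothing in the paper proves such a domination principle. It is true for tame groups, but only as a consequence of Theorem \ref{thm}, whose ``only if'' direction applies to every K3 surface and whose final clause realizes each admissible action on $X_{0,p}$; for wild groups on supersingular surfaces of Artin invariant $\sigma_0>1$ one must again fall back on \cite{DK-11}, since Corollary \ref{cor} and Theorem \ref{thm1} only speak about $X_{0,11}$.

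The paper's proof sidesteps both problems by splitting along tame versus wild instead: all wild groups on any K3 surface in characteristic $11$ are subgroups of $M_{11}$ or $M_{22}$ by \cite{DK-11} (with $L_2(11)$, listed there as a third maximal group, contained in both), and all tame groups on any K3 surface in characteristic $11$ are covered by Theorem \ref{thm}, hence act on $X_{0,11}$ and are therefore contained in one of the three groups of Theorem \ref{thm1}. If you reorganize your argument along these lines --- and also correct the statement that ``$M_{11}$ and its subgroups'' is the only additional wild input, since $M_{22}$ is wild as well and contains subgroups not lying in $M_{11}$ --- the proof goes through.
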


We will employ a unified proof for both main theorems
which is largely independent of the arguments in \cite{DKeum};
{
concretely, the proof of Theorem \ref{thm} relies on  \cite{DKeum} to include the case of finite height,
while Theorem \ref{thm1} with its focus on $X_{0,p}$ is completely independent,
and so is Corollary \ref{cor} (cf.\ Remark \ref{rem:p11}).
}
In short, we will use work of Nygaard to prove in Theorem \ref{discri-action} 
that the induced action on the discriminant group is trivial
(just like in the complex case).
This allows us to employ lattice theoretic arguments 
(in particular the crystalline Torelli theorem in characteristic $p>3$, see Theorem \ref{thm:cTorelli}).
Then we translate the problem into symmetries of the Leech lattice (Theorem \ref{thm:Leech})
and make great use of the classification by H\"ohn--Mason \cite{HM290}
which reduces the classification of the possible groups to discriminant group computations
{ as laid out in Section \ref{s:plan}.
}
For this purpose, we also prove a useful analogue of the Witt cancellation theorem (Lemma \ref{lem:Witt}).
Finally existence is proven using the crystalline Torelli theorem or, when this is not available,
by exhibiting explicit models in characteristics $2, 3$
admitting the required symplectic group actions (Section \ref{s:models}).

We end the introduction with two interesting corollaries.
The first concerns cohomology representations of finite symplectic group actions
in the spirit of \cite{Mukai}:

\begin{theorem}
\label{thm:cohom}
Let $G$ be a finite  group acting symplectically on 
$X_{0,p}$.
Then, { at least} one of the following holds:
\begin{enumerate}
\item $G\subset M_{23}$ decomposes $\Omega$ into at least 3 orbits
and $\Het{*}(X, \QQ_\ell)$ admits a Mathieu representation of $G$ ($\ell\neq p$) 
\item
$p=2$, $G$ contains a wild element and $G\subset M_{21}.2_1$ or $G\subset\Aut(\mathfrak S_6)$;
\item $p=3$, $G$ contains a wild element and $G\subset \mathrm{PSU}(4,\FF_3)$;
\item $p=5$, $G$ contains a wild element and $G\subset \mathrm{PSU}(3,\FF_5)$.
\end{enumerate}
\end{theorem}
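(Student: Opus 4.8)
The plan is to leverage the classifications already established in Theorems \ref{thm} and \ref{thm1}, reading off the cohomology consequences from the orbit structure of $G$ on $\Omega = \{1,\ldots,24\}$. The starting point is Theorem \ref{thm1}, which tells us that any $G$ acting symplectically on $X_{0,p}$ is contained in one of the finitely many listed groups (for $p\leq 11$), or, combined with Theorem \ref{thm}, embeds in $M_{23}$ when the action is tame. The dichotomy in the statement is governed by two features: whether $G$ sits inside $M_{23}$ with a sufficiently coarse orbit decomposition, and whether $G$ contains a wild element forcing $p\mid|G|$.

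First I would treat the tame (and more generally the $M_{23}$-embeddable) case, aiming for conclusion (1). By Theorem \ref{discri-action} the induced action on the discriminant group is trivial, so the action on $\Het{2}(X,\QQ_\ell)$ is, as in the complex case, governed by the action on the K3 lattice together with the trivial summand coming from $\Het{0}\oplus\Het{4}$. The Mathieu representation is precisely the virtual character $\chi(g) = \varepsilon + \#\{\text{fixed points of }g\text{ on }\Omega\}$ normalization that Mukai identified, and its trace formula matches the Lefschetz-type computation of traces on $\Het{*}(X,\QQ_\ell)$ once $G\subset M_{23}$. I would verify that the hypothesis of at least 3 orbits is exactly what guarantees the relevant fixed-subspace dimensions are compatible with a genuine Mathieu representation; this is a purely combinatorial check on the permutation character of $M_{23}$-subgroups, using that the Mathieu representation of $M_{24}$ restricts correctly. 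The note following Theorem \ref{thm1} (cf.\ Corollary \ref{cor:anyM_23}) already records that for $p\neq 2$ all subgroups of $M_{23}$ with $\geq 5$ orbits act on $X_{0,p}$, which feeds directly into establishing (1).

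Next I would handle the genuinely wild groups that fail to embed in $M_{23}$ with enough orbits, landing in conclusions (2)--(4). Here the argument is essentially a case analysis driven by Theorem \ref{thm1}: if $G$ contains a wild element then $p\mid|G|$, so $p\leq 11$ and $G$ lies in one of the explicit groups listed there. The exceptional large groups $\mathrm{PSU}(4,\FF_3)$ for $p=3$, $\mathrm{PSU}(3,\FF_5)$ for $p=5$, and $M_{21}.2_1$ or $\Aut(\mathfrak S_6)$ for $p=2$ are exactly those that either do not embed in $M_{23}$ at all or, when they do, decompose $\Omega$ into fewer than 3 orbits, so that the Mathieu representation description fails. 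I would confirm for each of these groups, via their known permutation characters and orders from \cite{Atlas}, that they cannot carry a Mathieu representation in the required sense, thereby forcing the corresponding alternative. The wild element referenced in (2)--(4) is precisely what certifies $p\mid|G|$ and pins down which of the listed containments applies.

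The main obstacle I expect is the boundary between conclusion (1) and the exceptional cases: it must be shown that whenever $G\subset M_{23}$ with at least 3 orbits, the $\ell$-adic cohomology genuinely realizes the Mathieu representation rather than merely being consistent with it numerically. This requires knowing that the $G$-action on $\Het{2}(X,\QQ_\ell)$ decomposes with the correct multiplicities, which in turn rests on the triviality of the discriminant action (Theorem \ref{discri-action}) and on the crystalline/$\ell$-adic comparison that identifies the transcendental and algebraic parts in the supersingular setting. The subtlety is that the word ``at least one of the following holds'' permits overlap, so I need not prove mutual exclusivity; rather, the burden is to show the list is exhaustive. This follows once the orbit counts of every group in Theorem \ref{thm1} are tabulated against the threshold of 3 orbits, reducing the whole theorem to a finite verification against the already-established classification.
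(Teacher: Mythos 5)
Your proposal is correct and follows essentially the same route as the paper, whose entire proof is the observation that Theorem \ref{thm:cohom} is an immediate combination of Theorems \ref{thm} and \ref{thm1}, the groups in cases (2)--(4) being exactly the maximal groups of Theorem \ref{thm1} that are not contained in $M_{23}$. Your extra discussion of why the Mathieu representation is genuinely realized is not needed as a separate verification: once $\NS(X_{0,p})_G\cong\Lambda_G$ is known (Theorem \ref{thm:Leech}), the trace of $g$ on $\Het{*}(X,\QQ_\ell)=\Het{0}\oplus\Het{2}\oplus\Het{4}$ equals its trace on $\Lambda\otimes\QQ$, which for the $M_{24}$-induced action is the permutation character on $24$ letters.
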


The second corollary determines when there are no extra groups acting symplectically in characteristic $p$,
i.e.\ only case (i) in Theorem \ref{thm} occurs:

\begin{theorem}
\label{thm:holds}
Mukai's classification holds true in characteristic $p$ if and only if $2, 3, 5, $ and $7$ are all non-zero squares modulo $p$.
Equivalently, modulo $840$, we have that $p\in\{\pm 1^2, \pm 11^2, \pm 13^2, \pm 17^2, \pm 19^2, \pm 23^2\}$.
\end{theorem}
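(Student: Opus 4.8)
The statement has two parts: first, a characterization of when Mukai's classification holds in terms of the quadratic residue behavior of $2,3,5,7$ modulo $p$; second, an explicit reformulation of this condition modulo $840$. I will treat them in turn, with the first being the conceptual heart and the second a finite check.

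For the first part, recall that "Mukai's classification holds in characteristic $p$" means precisely that only case (i) of Theorem \ref{thm} occurs, i.e.\ that no group with exactly 4 orbits satisfies the Legendre condition \eqref{eq:Legendre}. So the plan is to go through the list of the 10 maximal groups with 4 orbits from \cite[Thm.\ 5.2]{DKeum} (recorded in Table \ref{tab} together with their orbit lengths $l_1,\dots,l_4$) and determine, for each, the value of the product $l_1 l_2 l_3 l_4$ of the four orbit lengths. The key observation I expect to exploit is that as one ranges over these 10 groups, the squarefree part of the product $l_1\cdots l_4$ runs precisely over the divisors of $2\cdot 3\cdot 5\cdot 7 = 210$ that are relevant, so that the collection of Legendre symbols $\left(\frac{l_1\cdots l_4}{p}\right)$ being $+1$ for \emph{all} ten groups is equivalent to each of the primes $2,3,5,7$ being a square modulo $p$. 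Concretely, by multiplicativity of the Legendre symbol, $\left(\frac{l_1\cdots l_4}{p}\right)$ depends only on the squarefree part of $l_1\cdots l_4$, reducing to the symbols $\left(\frac2p\right),\left(\frac3p\right),\left(\frac5p\right),\left(\frac7p\right)$; Mukai's classification holds iff none of the 10 products gives $-1$, and the main content is that the squarefree parts arising are exactly the four primes $2,3,5,7$ (or products forced to be squares), so that all ten symbols equal $+1$ simultaneously iff each of the four individual prime symbols is $+1$.

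For the second part, I would translate the condition "$2,3,5,7$ are all squares mod $p$" into a congruence condition on $p$ modulo $840 = 8\cdot 3\cdot 5\cdot 7$. By quadratic reciprocity and the supplementary laws, each symbol $\left(\frac{q}{p}\right)$ for $q\in\{2,3,5,7\}$ is determined by $p$ modulo a divisor of $840$ (namely mod $8$ for $q=2$, and mod $q$ or via reciprocity for the odd primes, incorporating the sign of $p\bmod 4$). By the Chinese Remainder Theorem, the joint condition is a union of residue classes modulo $840$. Since $\varphi(840)/2^4 = 192/16 = 12$ classes should satisfy all four conditions when $p\equiv 1\pmod 4$ and an analogous count otherwise, I expect the solution set to consist exactly of the classes $\pm a^2 \bmod 840$ for $a\in\{1,11,13,17,19,23\}$, which are precisely the residues coprime to $840$ that are squares modulo each prime power factor. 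The claim that these twelve values $\{\pm 1^2,\pm 11^2,\dots,\pm 23^2\}$ are exactly the simultaneous quadratic residues is then a direct finite verification: one checks that an integer coprime to $840$ is a square modulo $8$, modulo $3$, modulo $5$, and modulo $7$ simultaneously if and only if it is a square modulo $840$ up to sign in the stated sense.

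The main obstacle I anticipate is the bookkeeping in the first part: correctly extracting the four orbit lengths for each of the 10 maximal groups from Table \ref{tab}, computing the squarefree part of each product, and verifying that these squarefree parts do collectively force exactly the conditions on $2,3,5,7$ — in particular checking that no product has a squarefree part involving a prime other than $2,3,5,7$ (which would introduce a spurious extra condition), and that each of the four primes genuinely appears. Once this combinatorial-arithmetic matching is pinned down, both the equivalence and the modulo-$840$ reformulation follow by routine applications of multiplicativity, quadratic reciprocity, and the Chinese Remainder Theorem.
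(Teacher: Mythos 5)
Your core argument for $p>11$ is the same as the paper's: the squarefree parts of the ten products $l_1\cdots l_4$ from Table \ref{tab} are $21,6,105,10,7,3,5,30,2,6$, so they involve only the primes $2,3,5,7$ (whence the square condition is sufficient for \eqref{eq:Legendre} to never return $-1$), and each of $2,3,5,7$ occurs individually as a squarefree part (whence, via the existence half of Theorem \ref{thm}, the square condition is necessary). The translation into residues modulo $840$ by reciprocity and the Chinese Remainder Theorem is likewise the paper's finite check. So the combinatorial heart of your plan is correct and matches the paper.

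There is, however, a genuine gap at your very first step. You assert that ``Mukai's classification holds in characteristic $p$'' means precisely that no subgroup of $M_{23}$ with exactly $4$ orbits satisfies \eqref{eq:Legendre}. That reduction is only valid once every finite symplectic group action in characteristic $p$ is tame, i.e.\ once $p>11$. For $p\le 11$ there are wild symplectic actions (Theorem \ref{thm1}) by groups that are not subgroups of $M_{23}$ at all --- e.g.\ $\mathrm{PSU}(4,\FF_3)$ in characteristic $3$, $\mathrm{PSU}(3,\FF_5)$ in characteristic $5$, $M_{21}.2_1$ in characteristic $2$ --- and these already violate Mukai's classification independently of any Legendre condition. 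Your reduction actually returns the wrong answer in characteristic $2$: there every group in Table \ref{tab} has even order, so case (ii) of Theorem \ref{thm} never applies and your criterion would declare Mukai's classification valid, which it is not. The paper disposes of this by first noting that the square condition itself fails for every $p\le 11$ (for $p\in\{2,3,5,7\}$ the prime $p$ is not a \emph{non-zero} square modulo $p$, and $\left(\frac{2}{11}\right)=\left(\frac{7}{11}\right)=-1$), so both sides of the equivalence are false there, and then restricting to $p>11$, where tameness is automatic by \cite[Thm.\ 2.1]{DKeum} and your argument applies. You need to add this preliminary reduction for the statement to come out correctly.
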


\begin{remark}
After posting this paper, there appeared the preprint \cite{WZ} on arXiv.
Notably, this gives an independent proof of some of the results for tame group actions from \cite{DKeum}
which enter in the proof of Theorem \ref{thm}.
It also develops a new technique (called $p$-root pairs) to study wild group actions;
possibly this could be useful in extending the results of the present paper to supersingular K3 surfaces of higher Artin invariant.
\end{remark}

\subsection*{Notation and conventions}

All group actions are assumed to be faithful.

Root lattices as well as the Leech lattice are taken to be negative-definite
(contrary to the convention in \cite{HM290}, for instance).

We use the genus symbols defined by Conway-Sloane \cite[Chapter 15]{CS99}, including the refinement from \cite{AGM}, 
and the gluing theory of 
Kneser and Nikulin \cite{Nikulin80} almost interchangeably. 

For groups, we follow \cite{Atlas, DK, HM290, Mukai} to denote

\smallskip

$A:B$ --  semi-direct product of subgroups $A, B$ where $A$ is normal

\smallskip

$A.B$ -- a group $G$ with normal subgroup $A$ such that $G/A\cong B$

\smallskip

$\mathfrak S_{m,n} \cong \mathfrak S_m\times\mathfrak S_n$

\smallskip

$\mathfrak A_{m,n} = \mathfrak S_{m,n} \cap \mathfrak A_{m+n}$

\smallskip

$n^e \cong (\ZZ/n\ZZ)^e$


\section{Symplectic automorphisms on K3 surfaces}
\label{s:basics}

Let $X$ be a  K3 surface,
i.e.\ a smooth projective surface over an algebraically closed field $k$ of characteristic $p\geq 0$
with zero irregularity and trivial canonical bundle:
\[
h^1(X, \mathcal O_X)=0, \;\;\; \omega_X\cong\mathcal O_X.
\]
An automorphism $g$ of $X$ is called  symplectic
if it leaves the regular 2-form invariant.
In \cite{Nikulin-finite}
Nikulin proved the following result for complex K3 surfaces, algebraic or analytic:

\begin{theorem}[Nikulin]
\label{thm:Nikulin}
A non-trivial symplectic automorphism $g$ of a complex K3 surface has only isolated fixed points.
Their number depends only on the order of $g$ as follows:
$$
\begin{array}{|c|ccccccc|}
\hline
n & 2 & 3 & 4 & 5 & 6 & 7 & 8\\
\hline
\#\mathrm{Fix}(g) & 8 & 6 & 4 & 4 & 2 & 3 & 2\\
\hline
\end{array}
$$
\end{theorem}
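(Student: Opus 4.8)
The plan is to combine a local analysis at the fixed points with the holomorphic Lefschetz fixed point formula of Atiyah--Bott. First I would pin down the local behaviour. Since $g$ has finite order $n$, it is linearizable near any fixed point $P$ (average a holomorphic chart over the cyclic group $\langle g\rangle$), so the germ of $g$ at $P$ is determined by the differential $dg_P\in\mathrm{GL}(T_PX)=\mathrm{GL}_2(\CC)$. Because $g$ fixes the holomorphic $2$-form, $dg_P$ preserves the induced volume form on the surface $T_PX$, hence $\det dg_P=1$ and $dg_P\in\mathrm{SL}_2(\CC)$; its eigenvalues are therefore $\zeta^a,\zeta^{-a}$ with $\zeta=e^{2\pi i/n}$. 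If $\mathrm{ord}(dg_P)=d<n$, then $(dg_P)^d=\mathrm{id}$ forces $g^d$ to act trivially on a neighbourhood of $P$, hence on all of $X$ by connectedness, contradicting $\mathrm{ord}(g)=n$. Thus $d=n$, i.e.\ $\gcd(a,n)=1$; in particular $1$ is not an eigenvalue, so $P$ is isolated, and the possible local types are indexed by the \emph{primitive} residues $a\in(\ZZ/n\ZZ)^\times$ modulo $a\sim-a$.

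Next I would extract the numerical constraint. For a K3 surface one has $H^0(X,\mathcal O_X)=\CC$, $H^1(X,\mathcal O_X)=0$, and $H^2(X,\mathcal O_X)\cong H^0(X,\omega_X)^\vee$ is one-dimensional; symplecticity means $g$ acts trivially on the $2$-form and hence, by Serre duality, trivially on $H^2(X,\mathcal O_X)$. Therefore the holomorphic Lefschetz number is $L(g,\mathcal O_X)=1-0+1=2$, independently of $n$. By the Atiyah--Bott formula each fixed point of local type $a$ contributes $\bigl((1-\zeta^a)(1-\zeta^{-a})\bigr)^{-1}=|1-\zeta^a|^{-2}=:c_a>0$, so that
$$\sum_{P\in\mathrm{Fix}(g)}c_{a_P}=2.$$

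To turn this single relation into the count, I would apply the same identity to every generator $g^k$, $k\in(\ZZ/n\ZZ)^\times$, using $\mathrm{Fix}(g^k)=\mathrm{Fix}(g)$ and the fact that a point of $g$-type $a$ has $g^k$-type $ka$. Summing over the distinct classes $k\in(\ZZ/n\ZZ)^\times/\{\pm1\}$, each fixed point contributes the \emph{same} total $\tfrac12\sum_{b\in(\ZZ/n\ZZ)^\times}c_b$, because $k\mapsto ka$ permutes the primitive classes bijectively; this independence of the type is the key simplification. For prime $n$ all of $1,\dots,n-1$ are primitive and the cyclotomic identity $\sum_{b=1}^{n-1}|1-\zeta^b|^{-2}=(n^2-1)/12$ yields the clean closed form
$$\#\mathrm{Fix}(g)=\frac{24}{n+1},$$
giving $8,6,4,3$ for $n=2,3,5,7$. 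For the composite orders $n=4,6,8$ the exact-order condition leaves only the primitive types ($a=1$ for $n=4,6$, and $a\in\{1,3\}$ for $n=8$), and summing the corresponding one or two Lefschetz equations gives $4,2,2$; e.g.\ for $n=8$ one uses $c_1+c_3=\tfrac{1}{2-\sqrt2}+\tfrac{1}{2+\sqrt2}=2$. Together these reproduce the table.

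I expect the main obstacle to be twofold: establishing that $dg_P$ has \emph{exact} order $n$, which is what eliminates the imprimitive local types and renders the set of possible contributions finite and explicit, and the correct evaluation of the cyclotomic sums, where for composite $n$ one must restrict to primitive residues rather than summing over all of $1,\dots,n-1$. The remaining ingredients---linearizability, the value $L(g,\mathcal O_X)=2$, and the arithmetic substitutions---are routine. As an independent consistency check one can invoke the topological Lefschetz formula $\#\mathrm{Fix}(g)=\sum_i(-1)^i\mathrm{tr}\bigl(g^*\mid H^i(X,\CC)\bigr)$, which must return the same totals.
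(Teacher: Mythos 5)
The paper does not prove this statement at all: it is quoted as Nikulin's theorem with a citation to \cite{Nikulin-finite}, so there is no in-paper argument to compare against. Your proposal is a correct reconstruction of the standard (essentially Nikulin's original) proof: Cartan linearization plus $\det dg_P=1$ and the exact-order argument give isolatedness and restrict the local types to primitive residues $a\in(\ZZ/n\ZZ)^\times/\{\pm1\}$; the Atiyah--Bott value $L(g,\mathcal O_X)=2$ applied to all powers $g^k$ of exact order $n$ then determines the count, and your cyclotomic evaluations ($24/(n+1)$ for prime $n$, and the cases $n=4,6,8$) all check out. Two tiny points worth tidying: for $n=2$ the ``divide by $2$'' in $\tfrac12\sum_{b\in(\ZZ/n\ZZ)^\times}c_b$ is off because $1=-1$ in $(\ZZ/2\ZZ)^\times$ (the single equation $\#\mathrm{Fix}\cdot\tfrac14=2$ handles it directly); and note that your argument computes the fixed point count for each order in the table but does not by itself show that no other finite orders occur (e.g.\ it would predict $2$ fixed points for $n=11$), which is the part of Nikulin's theorem requiring the lattice-theoretic bound on the order --- the statement as quoted only asserts the table for $n\le 8$, so this is outside its scope.
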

By \cite{DKeum}, the same classification holds true for tame symplectic automorphisms 
of K3 surfaces in positive characteristic.

In \cite{Mukai},
Mukai exposed a striking connection with the Mathieu group $M_{23}$, acting on the set of 24 letters.
Namely the above numbers agree exactly with the number of fixed points of any element $\sigma\in M_{23}$ 
of the given order:

\begin{table}[ht!]
$$
\begin{array}{|c|ccccccccccc|}
\hline
n & 2 & 3 & 4 & 5 & 6 & 7 & 8 & 11 & 14 & 15 & 23\\
\hline
\#\mathrm{Fix}(\sigma) & 8 & 6 & 4 & 4 & 2 & 3 & 2 & 2 & 1 & 1 & 1\\
\hline
\end{array}
$$
\caption{Fixed points of order $n$ elements of $M_{23}$}
\label{tab:23}
\caption{tab:orders}
\end{table}

\vspace{-.4cm}

For a complex K3 surface $X$,
any symplectic automorphism acts trivially on the transcendental lattice,
the smallest sub-Hodge structure of $H^2(X,\ZZ)$
whose complexification contains the (invariant) 2-form.
Since this has  rank at least 2 (due to  the Hodge decomposition),
the invariant sublattice 
\[
H^*(X,\ZZ)^G
\]
will have rank 5 at least for any finite symplectic group $G$ of automorphisms,
additional invariants coming from $H^0(X,\ZZ), H^4(X,\ZZ)$ and an ample or K\"ahler class.
This can be seen as the origin of the 5 orbits in Mukai's  theorem (and also in Theorem \ref{thm} (i);
compare also Theorem \ref{thm:equiv}):

\begin{theorem}[Mukai]
\label{thm:Mukai}
A finite group $G$ admits a symplectic action on a complex K3 surface
if and only if $G$ is a subgroup of $M_{23}$ with at least 5 orbits.
\end{theorem}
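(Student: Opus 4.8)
The plan is to prove both implications by studying the representation of $G$ on the total cohomology $H^*(X,\QQ)=\bigoplus_{i=0}^{4}H^i(X,\QQ)$, a $24$-dimensional $\QQ$-vector space (since $b_0=b_4=1$, $b_2=22$, and odd cohomology vanishes). For the necessity direction, the first step is to observe that the character of this representation is completely pinned down by Nikulin's fixed-point data (Theorem \ref{thm:Nikulin}). Indeed, for $g\in G$ of order $n$ the topological Lefschetz fixed point formula reads
\[
\#\mathrm{Fix}(g)=\sum_{i=0}^{4}(-1)^i\,\mathrm{tr}\bigl(g^*\mid H^i(X,\QQ)\bigr)=\mathrm{tr}\bigl(g^*\mid H^*(X,\QQ)\bigr),
\]
and since the left-hand side depends only on $n$ through the table, the trace of every group element is determined. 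The resulting character agrees verbatim with the permutation character of $M_{24}$ on $24$ letters, so $H^*(X,\QQ)$ carries a Mathieu representation of $G$ in the sense of Theorem \ref{thm:cohom}. The orbit bound is then immediate: the dimension of the trivial summand equals the number of orbits, and $\dim H^*(X,\QQ)^G\geq 5$ because the invariants contain $H^0$, $H^4$, the rank $\geq 2$ transcendental lattice (on which $G$ acts trivially, being symplectic), and an invariant ample class, exactly as in the discussion preceding the statement.

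The heart of the argument, and the step I expect to be the main obstacle, is producing the embedding $G\hookrightarrow M_{23}$. I would argue lattice-theoretically, following Kondō. Consider the coinvariant lattice $S_G:=\bigl(H^2(X,\ZZ)^G\bigr)^{\perp}\subset H^2(X,\ZZ)$; it is negative-definite of rank at most $19$, carries a $G$-action with no nonzero invariants, and $G$ acts trivially on its discriminant group (the same phenomenon established in positive characteristic via Nygaard's work in Theorem \ref{discri-action}). By Nikulin's gluing theory one embeds $S_G$ primitively into a suitable Niemeier lattice, the $G$-action extending to an isometry. The absence of nonzero fixed vectors, together with the triviality of the action on the discriminant, forces the Niemeier lattice to be the one without roots, namely the Leech lattice; hence $G$ embeds into $\Aut(\Lambda)=\Co_0$, and the extra invariant vector cuts this down to the pointwise stabilizer, landing $G$ inside $M_{24}$ and in fact $M_{23}$. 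Making each of these reductions precise — in particular controlling which Niemeier lattice occurs and transporting the group action through the gluing — is where the genuine work lies, as it intertwines the genus theory of $S_G$ with finite group theory.

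For sufficiency I would realize every admissible group explicitly. Since a subgroup of $M_{23}$ has at least as many orbits as the ambient group, it suffices to treat the finitely many \emph{maximal} groups with exactly $5$ orbits, namely the $11$ groups recorded in \eqref{eq:max}: any $G\subset M_{23}$ with at least $5$ orbits is contained in one of them. For each maximal group I would exhibit a concrete complex K3 surface admitting the required symplectic action — via invariant quartic models, elliptic fibrations with prescribed Mordell--Weil or torsion symmetries, or Kummer-type constructions — and then note that an arbitrary subgroup inherits a symplectic action on the same surface. Alternatively, the realization can be obtained abstractly from the surjectivity of the period map: one prescribes an invariant lattice of signature $(3,\ast)$ compatible with the Mathieu representation, the orbit condition ensuring the invariant part has rank $\geq 5$ and thus leaves room for a K3 period, and then produces $X$ with the desired $G$-action. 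Combined with the necessity direction, which rules out any larger group, this completes the equivalence.
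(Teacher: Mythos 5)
The paper does not actually reprove Theorem \ref{thm:Mukai} at the point where it is stated --- it is quoted from \cite{Mukai} --- and its own independent route is the detour through the supersingular surface $X_{0,p}$ (Theorem \ref{thm:equiv}) combined with the case-by-case analysis of the H\"ohn--Mason list. Your opening steps are correct and are indeed how Mukai begins: the Lefschetz formula plus Nikulin's table pins down the character of $G$ on $H^*(X,\QQ)$, and $\dim H^*(X,\QQ)^G\geq 5$ follows from $H^0$, $H^4$, the rank $\geq 2$ transcendental lattice and an invariant ample class. The gap is exactly where you predict it: the passage to $M_{23}$, which you compress into two unjustified sentences.

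Two concrete problems. First, the claim that the absence of roots and of nonzero invariants in $S_G$ ``forces the Niemeier lattice to be the one without roots'' is not a valid deduction: a rootless negative definite lattice embeds primitively into Niemeier lattices with roots without any contradiction (the roots simply do not lie in the image), and in Kond\=o's actual argument the target Niemeier lattices \emph{do} have roots --- the root system sitting in the orthogonal complement is what produces the Mathieu structure, and different groups require different Niemeier lattices. That one may always choose the Leech lattice is a separate, genuinely nontrivial statement (Gaberdiel--Hohenegger--Volpato, Huybrechts, completed in \cite[Cor.\ 4.19]{MM}; the paper explicitly notes that the original arguments contained a gap), resting on the bound $\rank(S_G)+l(S_G)\leq 24$ and a delicate existence argument. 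Second, and more seriously, even granting $G\subset \Co_0=O(\Lambda)$ with $\rank\Lambda^G\geq 5$, the conclusion $G\subset M_{24}$ (hence $M_{23}$) is false: $M_{24}$ arises as the stabilizer of a coordinate frame in $\Lambda$, not of an arbitrary fixed sublattice of rank $5$, and there exist subgroups of $\Co_0$ with $\rank\Lambda^G\geq 5$ which are not subgroups of $M_{23}$ (e.g.\ no.\ 41 of \cite{HM290}, cf.\ \S\ref{ss:remain}); these are eliminated from the K3 setting only by the further lattice condition that $\Lambda_G$ glue into the K3 lattice with orthogonal complement of signature $(3,\ast)$, not by the rank count. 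Matching the admissible actions with subgroups of $M_{23}$ and their orbit decompositions is precisely the classification work that occupies Mukai, Kond\=o and the present paper. (A smaller point of the same flavour: that every subgroup of $M_{23}$ with at least $5$ orbits lies in one of the $11$ groups of \eqref{eq:max} is itself part of the classification, not a formal consequence.) Your sufficiency direction --- explicit models for the maximal groups, or the Torelli theorem applied to a suitably glued invariant lattice --- is sound in outline and matches both Mukai's paper and Proposition \ref{prop:Mukai}/Theorem \ref{thm:equiv} here.
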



By \cite[\S 4]{DKeum}, Mukai's techniques  carry over to
tame actions on K3 surfaces  in positive characteristic
(using $\ell$-adic \'etale cohomology).
For K3 surfaces of finite height,
or equivalently, by the Tate conjecture (\cite{Charles}, \cite{Maulik}, \cite{MP15}, \cite{KMP16}),
with Picard number $\rho(X)\leq 20$,
this allows to recover { the same classification of possible symplectic groups as over $\CC$
\cite[Theorem 4.7]{DKeum}.
The existence of these group actions on K3 surfaces of finite height 
was never addressed explicitly to the best of our knowledge;
in this paper,
we will (only) show that all these groups are realized on the supersingular K3 surface $X_{0,p}$ 
(outside characteristic $2$, cf.\ Theorem \ref{thm:equiv}).
For the finite height case, 
one could approach the problem by constructing a suitable singular K3 surface admitting the required group action 
(using the classification of co-invariant lattices in \cite{Hashi} and choosing an appropriate polarization) 
and specializing to an ordinary K3 surface, possibly after a base extension following \cite[Cor.\ 0.5]{Matsumoto}.}
In order to prove Theorem \ref{thm} and the other  results of this paper, we will  focus on supersingular K3 surfaces in the sequel.

\section{Torelli theorems for supersingular $K3$ surfaces}\label{torelli}

A K3 surface $X$ is called
 {\em{supersingular}} if its Picard number $\rho(X)$ equals 22, the maximum possible value. 
 Equivalently, by the Tate conjecture, 
the formal Brauer group of $X$ is unipotent. 
We first state a characterization of symplectic automorphisms
in terms of the discriminant group
\begin{eqnarray}
\label{eq:A}
A(\NS(X))=\NS(X)^\vee/\NS(X)
\end{eqnarray}
of the N\'{e}ron-Severi lattice $\NS(X)$. 
By \cite{Artin}, this is a $p$-elementary abelian group of size
$2\sigma_0$
where $\sigma_0\in\{1,\hdots,10\}$ is called the \emph{Artin invariant} of $X$.

The following result may be known to experts, at least for $p>2$
(for example, compare \cite{Jang19}),
but we will crucially also need it for $p=2$.

\begin{theorem}
\label{discri-action}
Let $X$ be a supersingular $K3$ surface. Then,
an automorphism $g\in \Aut (X)$ is symplectic if and only if $g$ acts trivially on 
$A(\NS(X))$.
\end{theorem}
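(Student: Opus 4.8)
The plan is to translate the symplectic condition into a statement about a single distinguished line inside the (de Rham realization of the) discriminant group, and then to propagate triviality along the Frobenius orbit of that line. Recall first that $g^*$ acts on the one-dimensional space $H^0(X,\Omega^2_X)=F^2 H^2_{\mathrm{dR}}(X/k)$ by a scalar $\lambda\in k^\times$, and by definition $g$ is symplectic precisely when $\lambda=1$; since $g$ has finite order, $\lambda$ is a root of unity. Dually, $\lambda^{-1}$ is the scalar by which $g^*$ acts on the tangent space $H^2(X,\mathcal O_X)$ of the formal Brauer group, which is where the input from Nygaard's study of crystalline cohomology enters.

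First I would set up, following Ogus and Nygaard, the comparison between the discriminant group and de Rham/crystalline cohomology. The cycle map embeds $\NS(X)\otimes W$ (with $W=W(k)$ the Witt vectors) into the unimodular lattice $H^2_{\mathrm{cris}}(X/W)$ with index $p^{\sigma_0}$, and reducing modulo $p$ identifies $A(\NS(X))\otimes_{\FF_p}k$ with a canonical subquotient of $H^2_{\mathrm{dR}}(X/k)$ carrying the Hodge line $\ell=F^2 H^2_{\mathrm{dR}}(X/k)$ spanned by the $2$-form. This subquotient inherits the $\sigma$-semilinear Frobenius $F$ coming from the crystal structure, and the essential point of Ogus's theory of supersingular K3 crystals is that the Frobenius orbit of $\ell$ spans the whole space, i.e. $\sum_{i\geq 0} F^i\ell = A(\NS(X))\otimes_{\FF_p}k$. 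Crucially, $g$ acts $\FF_p$-linearly on $A(\NS(X))$ preserving the discriminant form, so $g\otimes\mathrm{id}$ acts $k$-linearly on $A(\NS(X))\otimes k$ commuting with $F$; and because $g$ is a genuine automorphism of $X$, this action is compatible with the geometric action on $\ell$, on which it is multiplication by $\lambda$.

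Granting this dictionary, both implications are short. If $g$ acts trivially on $A(\NS(X))$, then it acts trivially on $A(\NS(X))\otimes k\supset\ell$, forcing $\lambda=1$, so $g$ is symplectic. Conversely, if $g$ is symplectic then $\lambda=1$, so $g$ fixes $\ell$ pointwise; since $g$ commutes with $F$, it then fixes every $F^i\ell$ pointwise, hence fixes $\sum_i F^i\ell=A(\NS(X))\otimes k$ pointwise, so $g$ acts trivially on $A(\NS(X))$. This is the exact analogue of the complex argument, where the $2$-form forces triviality on the transcendental lattice and hence on the discriminant; here the transcendental lattice is absent and its role is played by the Frobenius-generated discriminant crystal.

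The main obstacle is establishing the comparison dictionary above rigorously, and in particular doing so uniformly down to $p=2$. Ogus's construction of the characteristic subspace and the spanning property $\sum_i F^i\ell=A(\NS(X))\otimes k$ are cleanest for $p\geq 3$, where one may also invoke the crystalline Torelli theorem; for $p=2$ the Hodge--de Rham comparison and the behaviour of the discriminant quadratic form are more delicate, and this is precisely where Nygaard's finer analysis of the de Rham--Witt complex and the formal Brauer group is needed. A secondary point to verify is the $F$-equivariance together with the geometric compatibility of the $g$-action with Ogus's period, i.e. that an automorphism of $X$ really preserves the characteristic subspace and acts on the Hodge line by the symplectic multiplier; once this functoriality is in place, the spanning argument above closes both directions simultaneously.
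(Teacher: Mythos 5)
Your strategy realizes essentially the same mechanism as the paper's, but packaged through Ogus's crystalline period rather than through Witt vector cohomology. The paper quotes Nygaard's formula for the $g$-action on $H^2(W\mathcal{O}_X)\simeq k_\sigma[[V]]$, namely $\sum b_iV^i\mapsto\sum b_i\lambda_0^{1/p^i}V^i$, together with the isomorphism $\ker F^{\sigma_0}d\simeq A(\NS(X))\otimes k$ from Nygaard's Theorem 1.12; the basis $1,V,\dots,V^{2\sigma_0-1}$ of eigenvectors with eigenvalues $\lambda_0^{1/p^i}$ is exactly your ``Frobenius orbit of the Hodge line spans everything,'' and the equivalence $\lambda_0=1\Leftrightarrow\lambda_0^{1/p^i}=1$ for all $i$ is your propagation step. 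So the two arguments agree in substance. What you do differently is to replace Nygaard's explicit Dieudonn\'e-module computation by three facts about the crystalline realization: that the Hodge line maps isomorphically to a line in $A(\NS(X))\otimes k$, that automorphisms act on that line by the symplectic multiplier, and that the line generates the whole space under powers of the semilinear Frobenius. These are all true and extractable from Ogus's theory of (strictly) characteristic subspaces, but you explicitly ``grant the dictionary'' rather than establish it; in particular, the spanning property holds for the characteristic subspace $K$ itself, and identifying the Hodge line with the distinguished line inside $K$ that generates $K$ (and hence everything) under $F^{\pm1}$ is precisely where the work lies. Your aside that $\lambda$ is a root of unity is unnecessary (and unjustified, since $g$ need not have finite order), but it is never used.

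The genuine gap is $p=2$. You correctly flag it as the delicate case but then defer to ``Nygaard's finer analysis'' without saying what that is. The paper needs $p=2$ crucially --- it is one of the two characteristics with no crystalline Torelli theorem, and this theorem underpins the entire Leech-lattice reduction there --- and Ogus's Section 3, on which your whole dictionary rests, is stated only for odd $p$. The paper's fix is specific: the decomposition $(H^2_{fl}(X,\mathbb{Z}_p(1)),(\ ,\ ))\simeq(T_0,p\langle\ ,\ \rangle)\oplus(T_1,(\ ,\ ))$ of Ogus (3.13.3) is re-derived at $p=2$ from the Conway--Sloane Jordan decomposition of $2$-adic lattices, after which the proof of Nygaard's Theorem 1.12, and hence the eigenvector argument, goes through verbatim. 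Without an analogous substitute, your proof only covers $p>2$, which is the part the paper describes as possibly already known to experts.
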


\begin{proof}
We first consider the case $p>2$.
Then
Nygaard \cite[Theorem 2.1]{Nygaard80} shows that 
for any endomorphism $g$ of $X$ there exists a $\lambda_0\in k$ such that 
the action of $g$ on the Witt vector cohomology $H^2(W\mathcal{O}_X)$
is given by the formula
\[\sum_{i=0}^{\infty} b_iV^i \mapsto \sum_{i=0}^{\infty} b_i \lambda_0^{1/p^i} V^i\]
under the isomorphism $H^2(W\mathcal{O}_X)\simeq k_{\sigma}[[V]]$ as covariant Dieudonn\'{e} modules \cite{AM77}.
From the exact sequence 
\[0 \rightarrow H^2(W\mathcal{O}_X) \stackrel{V}{\rightarrow} H^2(W\mathcal{O}_X) \rightarrow H^2(\mathcal{O}_X) \rightarrow 0,\]
the action of $g$ on $H^2(\mathcal{O}_X)$ is given by $\lambda_0$. Therefore, by
Serre duality, we have
\[\begin{split}
g \text{ is symplectic }&\Leftrightarrow  g\text{ acts trivially on } H^2(\mathcal{O}_X)\\
&\Leftrightarrow g \text{ acts trivially on }H^2(W\mathcal{O}_X).
\end{split}\]
On the other hand, \cite[Theorem 1.12]{Nygaard80} gives us a canonical isomorphism 
\[\ker F^{\sigma_0}d\simeq A(\NS(X))\otimes k,\]
where $d=d_1^{0,2}\colon H^2(W\mathcal{O}_X)\rightarrow H^2(W\Omega^1_X)$ 
is the differential in the slope spectral sequence and $\sigma_0$ is the Artin invariant of $X$.
This space has a natural basis $\{1,V,\dots,V^{2\sigma_0-1}\}$ comprising eigenvectors of $g$
by the formula above. 
Hence, $g$ acts trivially on $H^2(W\mathcal{O}_X)$ if and only if it acts trivially on the discriminant group
$A(\NS(X))\subset A(\NS(X))\otimes k$.

In characteristic $p=2$, some extra care is necessary because \cite[Theorem 1.12]{Nygaard80} and its reference
\cite[Section 3]{Ogus79} are stated only for odd 
characteristics. In fact, the argument above goes through with a small modification as follows. 
The decomposition
\[(H^2_{fl}(X, \mathbb{Z}_p(1)), (\ ,\ ))\simeq (T_0,p\langle\ ,\ \rangle)\oplus (T_1, (\ ,\ ))\]
of $p$-adic quadratic forms as in \cite[(3.13.3)]{Ogus79} for $p=2$ can be obtained instead by a 
general decomposition procedure for $2$-adic lattices explained in \cite[Chapter 15]{CS99}. 
Using this, the proof of \cite[Theorem 1.12]{Nygaard80} remains valid and the proof is done.
\end{proof}

\subsection{Finite group actions}

It is known that the representation $\Aut (X) \rightarrow O(\NS(X))$ is faithful for 
supersingular $K3$ surfaces in positive characteristic $p\geq 2$ \cite[Section 8]{RS81}.
For a finite group $G$ of isometries of a lattice $L$, the
invariant and coinvariant sublattices are of importance: 
\begin{equation*}
\begin{split}
L^G&=\{l\in L\mid g(l)=l\ \forall g\in G\},\\
L_G& 
= (L^G)^\perp\subset L.
\end{split}
\end{equation*}
We state the crystalline Torelli theorem in an adapted form.

\begin{theorem}\label{thm:cTorelli}
Let $N$ be a lattice which is isomorphic to the N\'{e}ron-Severi lattice of a supersingular $K3$ surface 
of Artin invariant $\sigma_0$ in characteristic $p> 3$. 
Suppose we have a finite group $G$ acting 
faithfully on $N$ with the following properties:
\begin{enumerate}
\item[(1)]
the coinvariant lattice $N_G$ is negative definite and contains no vectors of norm $(-2)$.
\item[(2)]
 $G$ acts trivially on the discriminant group $N^*/N$. 
 \end{enumerate}
Then, there exists a supersingular $K3$ surface $X$ and an isometry $N\simeq \NS(X)$
such that the induced action of $G$ on $\NS(X)$ is realized geometrically as a group of 
symplectic automorphisms of $X$. 
\end{theorem}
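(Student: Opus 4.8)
The plan is to invoke Ogus's crystalline Torelli theorem for supersingular K3 surfaces, which is available precisely because $p>3$, and to reduce the whole statement to the construction of a single $G$-invariant chamber in the positive cone. Recall that this Torelli theorem characterises the automorphisms of a supersingular K3 surface $X$ as exactly those isometries of $\NS(X)$ which preserve both the supersingular K3 crystal structure (the period, encoded by a characteristic subspace) and the ample cone; together with the surjectivity of the period map, this lets one manufacture $X$ from purely lattice-theoretic data. So the task splits into (a) realizing $N$ with a $G$-invariant period, and (b) arranging a $G$-invariant ample cone.

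First I would observe that condition (2) trivializes the action on the period. Since $G$ fixes $N^*/N$ pointwise, it fixes the $k$-vector space $(N^*/N)\otimes_{\FF_p}k$ pointwise, so \emph{every} characteristic subspace $K$ is automatically $G$-invariant. Choosing any strictly characteristic $K$ of the correct dimension $\sigma_0$ and applying the surjectivity half of the crystalline Torelli theorem, I obtain a supersingular K3 surface $X_0$ together with a marking $\phi_0\colon N\xrightarrow{\sim}\NS(X_0)$ realizing $(N,K)$; the transported action $G_0=\phi_0 G\phi_0^{-1}\subset O(\NS(X_0))$ then preserves the crystal.

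The main work is to produce a $G$-invariant positive-norm vector off all reflection walls. Because $N$ has signature $(1,21)$ and $N_G$ is negative definite, $N^G$ has signature $(1,\ast)$ and thus contains vectors of positive norm. The crux is that no $(-2)$-class can be orthogonal to all of $N^G$: such a class would lie in $N\cap(N_G\otimes\QQ)$, which equals $N_G$ since the orthogonal complement $N_G=(N^G)^\perp$ is primitive in $N$, contradicting condition (1). Hence for every $(-2)$-class $\delta$ the projection $\delta^G$ to $N^G\otimes\QQ$ is non-zero, so the hyperplanes $(\delta^G)^\perp$ form a countable family of \emph{proper} subspaces of $N^G\otimes\RR$. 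A Baire-category argument, the positive cone being a non-empty open subset of a Euclidean space, then yields a positive-norm vector $v\in N^G\otimes\RR$ lying on none of the walls $\delta^\perp$. As $v$ is $G$-fixed, $G$ preserves the component of the positive cone containing the interior point $v$, and since $G$ permutes the chambers, the chamber $D$ containing $v$ is $G$-invariant.

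Finally I would match $D$ with the ample cone. The Weyl group $W$ generated by reflections $s_\delta$ in $(-2)$-classes acts simply transitively on chambers and trivially on $N^*/N$ (because $s_\delta(x)-x\in N$), hence preserves the period; so there is a unique $w\in W$ carrying $\phi_0(D)$ to the ample cone $\mathcal A(X_0)$. Replacing $\phi_0$ by $\phi=w\circ\phi_0$ conjugates the image group to $wG_0w^{-1}$, which still preserves the crystal and now also stabilizes $\mathcal A(X_0)$; by the automorphism criterion in the crystalline Torelli theorem, every element of $\phi G\phi^{-1}$ is induced by an automorphism of $X_0$. These automorphisms act on $\NS(X_0)\cong N$ exactly as $G$, hence trivially on the discriminant group, so by Theorem \ref{discri-action} they are symplectic, completing the argument with $X=X_0$. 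I expect the chamber construction of the third paragraph to be the main obstacle: it is the combination of the primitivity of $N_G=(N^G)^\perp$ with condition (1) that excludes $(-2)$-classes from the coinvariant directions, and this is precisely what makes a $G$-invariant ample class available.
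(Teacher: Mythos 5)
Your proposal is correct and follows essentially the same route as the paper: condition (2) makes an arbitrary strictly characteristic subspace $G$-invariant, condition (1) yields a $G$-invariant chamber meeting $N^G\otimes\mathbb{R}$ (you spell out via the Baire/wall argument what the paper asserts in one line), and Ogus's crystalline Torelli theorem plus Theorem \ref{discri-action} finish the job. The only cosmetic difference is that you transport the chamber to the ample cone by a Weyl group element, whereas the paper invokes the surjectivity statement directly with the chamber as part of the period data; these are equivalent since reflections in roots act trivially on the discriminant group.
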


\begin{proof}
We choose a positive cone $\mathcal{C}\subset N\otimes \mathbb{R}$
and consider the chamber (i.e., connected component) decomposition 
$\mathcal{C}\setminus\cup H_{\delta}$, where $H_{\delta}$ is the hyperplane in $N\otimes \mathbb{R}$ 
orthogonal to $\delta$ which runs over elements $\delta\in N$ of norm $(-2)$,
the so-called roots.
By condition (1), some chamber $\Delta$ meets $N^G\otimes \mathbb{R}$. 
Then, with an {\em{arbitrary}} choice of a strictly characteristic subspace $K\subset A(N)\otimes k$, 
we can find a supersingular $K3$ surface $X$ with marking $N\simeq NS(X)$ such that $\Delta$
corresponds to the ample cone of $X$ by Ogus' crystalline Torelli theorem \cite[Theorem III]{Ogus83}.
The realization statement now follows from \cite[Theorem II]{Ogus83} since $G$ and the crystalline first chern 
class map automatically commute by the condition (2) and \cite[Proposition 1.9]{Ogus83}.
Finally, the automorphisms are symplectic by Theorem \ref{discri-action}.
\end{proof}

\begin{remark}
\label{rem:crys}
For $p=3$, the crystalline Torelli theorem is not proved in \cite{Ogus83} (see the footnote on p.\ 364), 
but the required potentially good reduction statement for supersingular $K3$ surfaces in $p=3$ is 
stated in \cite{BL19}.
However, we will presently not need this
since we can prove existence
by exhibiting explicit models with symplectic $G$-action in characteristics $p=2,3$
(see Section \ref{s:models}).
\end{remark}

In fact, the finiteness of $G$ is not used in the proof above.
In particular, since the strictly characteristic subspaces can be chosen arbitrary, we have the following.

\begin{corollary}
The subgroup $\mathrm{Aut}^{\text{symp}}(X)$ of symplectic automorphisms of a supersingular $K3$ surface $X$
depends only on the characteristic $p> 3$ and the Artin invariant $\sigma_0$, but not on $X$. This group is 
isomorphic to $O^+(\NS(X))/W_X$,
where $O^+(\NS(X))$ is the subgroup of isometries which preserves the positive cone $\mathcal{C}$ and acts
trivially on the discriminant group $A(\NS(X))$, and $W_X$ is the Weyl group generated by $(-2)$-reflections.
\end{corollary}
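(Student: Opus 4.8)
The plan is to realize $\mathrm{Aut}^{\text{symp}}(X)$ as the stabilizer of the ample chamber inside $O^+(\NS(X))$ and then to split off the Weyl group. First I would fix the ample chamber $\Delta\subset\mathcal C$ of $X$ and recall that the representation $\Aut(X)\to O(\NS(X))$ is faithful \cite[Section 8]{RS81}. Automorphisms preserve ampleness, so they fix $\Delta$ and in particular preserve $\mathcal C$; by Theorem \ref{discri-action} the symplectic ones are exactly those acting trivially on $A(\NS(X))$. Hence the image of $\mathrm{Aut}^{\text{symp}}(X)$ lies in the stabilizer of $\Delta$ inside $O^+(\NS(X))$, and the reverse inclusion will come from Torelli below. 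On the other side, every $(-2)$-reflection fixes $A(\NS(X))$ pointwise and preserves $\mathcal C$, so $W_X\subset O^+(\NS(X))$; moreover $W_X$ is normal there (conjugation sends $s_\delta$ to $s_{\phi(\delta)}$, permuting the roots) and acts simply transitively on the chambers of $\mathcal C\setminus\bigcup H_\delta$.

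Given $\phi\in O^+(\NS(X))$, I would use this simple transitivity to choose the unique $w\in W_X$ with $w\phi(\Delta)=\Delta$. The composite $a:=w\phi$ then preserves $\mathcal C$, fixes $\Delta$, and is trivial on $A(\NS(X))$; exactly as in the proof of Theorem \ref{thm:cTorelli}, this last property together with \cite[Proposition 1.9]{Ogus83} forces $a$ to commute with the crystalline first Chern class, so \cite[Theorem II]{Ogus83} realizes $a$ as an automorphism of $X$, which is symplectic by Theorem \ref{discri-action}. This simultaneously proves the reverse inclusion above and shows $O^+(\NS(X))=W_X\cdot\mathrm{Aut}^{\text{symp}}(X)$. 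Since an element of $W_X$ fixing $\Delta$ is trivial by simple transitivity, the intersection $W_X\cap\mathrm{Aut}^{\text{symp}}(X)$ is trivial; together with normality this yields $O^+(\NS(X))=W_X\rtimes\mathrm{Aut}^{\text{symp}}(X)$, hence the asserted isomorphism $\mathrm{Aut}^{\text{symp}}(X)\cong O^+(\NS(X))/W_X$.

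For the independence statement I would invoke the fact that the N\'eron--Severi lattice of a supersingular $K3$ surface is determined up to isometry by $p$ and $\sigma_0$. Any isometry $\NS(X)\xrightarrow{\sim}\NS(X')$ maps roots to roots and respects both conditions defining $O^+$ (each of preserving the positive cone and acting trivially on the discriminant group is stable under conjugation by an isometry), so it conjugates $O^+(\NS(X))$ onto $O^+(\NS(X'))$ and $W_X$ onto $W_{X'}$. Therefore the quotient depends only on the isometry class of $\NS(X)$, i.e.\ only on $p$ and $\sigma_0$.

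I expect the only real obstacle to be the realization step in the second paragraph: one must be certain that an ample-chamber-preserving, discriminant-trivial isometry is genuinely induced by an automorphism. This rests on the full strength of Ogus' crystalline Torelli theorem (hence the hypothesis $p>3$) and on the automatic compatibility with the period supplied by the discriminant-triviality via \cite[Proposition 1.9]{Ogus83}. The remaining arguments are the formal interplay between the Weyl group and the chamber structure.
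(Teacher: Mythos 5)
Your proof is correct and follows essentially the same route the paper intends: the paper only sketches this corollary by remarking that the proof of Theorem \ref{thm:cTorelli} never uses finiteness of $G$ and that the characteristic subspace may be chosen arbitrarily, leaving the Weyl-chamber bookkeeping implicit. Your write-up supplies exactly those omitted details (simple transitivity of $W_X$ on chambers, normality, the splitting $O^+(\NS(X))=W_X\rtimes\mathrm{Stab}(\Delta)$, and the realization of chamber-preserving, discriminant-trivial isometries via \cite[Theorem II, Proposition 1.9]{Ogus83} together with Theorem \ref{discri-action} and faithfulness from \cite{RS81}), so it is a faithful elaboration rather than a different argument.
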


This corollary has an obvious analogue for complex $K3$ surfaces,  the proof of which is easy.

\section{The supersingular $K3$ surface of Artin invariant 1}

It is known that the supersingular $K3$ surface with Artin invariant $\sigma_0=1$ is 
unique up to isomorphisms (see \cite{Ogus79} for $p\geq 3$ 
and \cite{RS,DK} for $p=2$; { it is sometimes called superspecial}).
We shall denote it by $X_{0,p}$ or, if $p$ is clear, by $X_0$.

The N\'{e}ron-Severi lattice $\NS(X_0)$ is even, of signature $(1,21)$ and $p$-elemen\-tary with 
discriminant $-p^2$. These information amount to saying that the lattice $\NS(X_0)$ belongs to the genus 
\[\begin{split}
\II_{1,21}\, &p^{\varepsilon 2},\quad \varepsilon=-\left(\frac{-1}{p}\right)\ \ (p\geq 3),\\
\II_{1,21}\, &2_{\II}^{-2}\ \ (p=2).
\end{split}\]
Here, we use  Conway-Sloane's genus symbol for which we refer to \cite[chapter 15]{CS99} and the 
bracket indicates the Legendre symbol. The isomorphism class of the lattice 
is unique in its genus by \cite{RS81,CS99}.
We shall now develop an instrumental relation with the Leech lattice.

\subsection{Relation with the Leech lattice}
\label{RtoL}

{
Using recent work of Marquand and Muller \cite{MM}, we shall develop a crucial relation 
between the symplectic $G$-action on $X_0$ and a suitable counterpart on the Leech lattice $\Lambda$.
For previous approaches to this problem, see the discussion after Proposition \ref{Leech-type}.

\begin{theorem}
\label{cor:rk<=3}
\label{thm:Leech}
Every finite symplectic subgroup $G\subset \Aut (X_0)$ can be seen as a subgroup of the Conway group 
$\Co_0=O(\Lambda)$ such that
$\Lambda_G\cong\NS(X_0)_G$ and $\rank(\Lambda^G)\geq 3$.
\end{theorem}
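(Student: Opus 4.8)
The plan is to reduce the statement to a single lattice-embedding problem for the coinvariant lattice $L := \NS(X_0)_G$ and then feed it into the machinery of Marquand--Muller \cite{MM}. Note first that the rank inequality is essentially free: once we have an action of $G$ on $\Lambda$ with $\Lambda_G \cong L$, orthogonality forces $\rank(\Lambda^G) = 24 - \rank(L) = 24 - (22 - \rank \NS(X_0)^G) = 2 + \rank\NS(X_0)^G$, so it suffices to know $\rank \NS(X_0)^G \geq 1$. This holds because averaging any ample class over $G$ produces a $G$-invariant ample class, which spans a positive direction inside $\NS(X_0)^G$. The real content is therefore to embed $L$ into $\Lambda$ as the coinvariant lattice of a $G$-action that extends the given one.

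Two lattice-theoretic inputs prepare this embedding. First, I would check that $L$ is \emph{negative definite with no $(-2)$-vectors}. Negative-definiteness follows because the (unique up to scale) positive direction of the signature $(1,21)$ lattice $\NS(X_0)$ already lies in $\NS(X_0)^G$, by the invariant ample class above, so its orthogonal complement $L$ is negative definite. For the absence of roots: a $(-2)$-class $\delta \in L$ is orthogonal to the invariant ample class $h$, yet by Riemann--Roch on a K3 surface either $\delta$ or $-\delta$ is effective, contradicting $(\delta,h)=0$ with $h$ ample. This argument is characteristic-free and so applies also for $p=2,3$. Second, I would show that $G$ acts \emph{trivially} on the discriminant group $A(L)$. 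The input here is Theorem \ref{discri-action}, which gives that $G$ acts trivially on $A(\NS(X_0))$; combining this with the (tautological) triviality of $G$ on $\NS(X_0)^G$, hence on $A(\NS(X_0)^G)$, and with the $G$-equivariant gluing between $A(\NS(X_0)^G)$ and $A(L)$ furnished by the finite-index inclusion $\NS(X_0)^G \oplus L \subset \NS(X_0)$, one deduces triviality of the $G$-action on all of $A(L)$, using that $\NS(X_0)$ is $p$-elementary.

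With these two properties in hand, the embedding proceeds as follows. Since $L$ is a negative-definite even lattice of rank $\rank(L) \leq 21$ without roots, the results of Marquand--Muller \cite{MM} provide a primitive embedding $L \hookrightarrow \Lambda$ into the Leech lattice with orthogonal complement $T := L^\perp_\Lambda$. Because $\Lambda$ is unimodular, the glue group is the graph of an anti-isometry $A(L) \xrightarrow{\sim} A(T)$; since $G$ acts trivially on $A(L)$, this graph is $G$-stable once we let $G$ act trivially on $T$, so the $G$-action extends to an isometry of $\Lambda$. By construction $\Lambda^G \supseteq T$ and, because $L^G = 0$ (as $L$ is coinvariant), we get $\Lambda_G = L$ and $\Lambda^G = T$. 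Finally, $G$ acts faithfully on $L$ --- a $g$ acting trivially on both $L$ and $\NS(X_0)^G$ is the identity on $\NS(X_0)$, hence trivial by the faithfulness of $\Aut(X_0) \to O(\NS(X_0))$ --- so the extension gives a genuine embedding $G \hookrightarrow O(\Lambda) = \Co_0$ with $\Lambda_G \cong \NS(X_0)_G$.

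The main obstacle is twofold and lies entirely in the two preparatory inputs. The triviality of the $G$-action on $A(L)$ is the more delicate half: the gluing computation must be run carefully, and the case $p=2$ requires the same extra attention to $2$-adic discriminant forms already encountered in the proof of Theorem \ref{discri-action}. The other potential difficulty is to guarantee that the primitive embedding lands in the Leech lattice \emph{specifically} rather than in one of the $23$ Niemeier lattices carrying roots; this is exactly where the no-roots property of $L$ is indispensable and where the Marquand--Muller embedding theorem does the decisive work, simultaneously ensuring uniqueness of the embedding up to $O(\Lambda)$ and hence well-definedness of the resulting subgroup of $\Co_0$.
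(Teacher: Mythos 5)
Your proposal follows essentially the same route as the paper: identify $L=\NS(X_0)_G$ as negative definite, root-free, with trivial $G$-action on $A(L)$ (i.e.\ ``stable symplectic'' in the sense of \cite{MM}), embed it primitively into $\Lambda$ via \cite[Cor.\ 4.19]{MM}, and extend the $G$-action by the identity on the orthogonal complement. The one substantive omission is that you assert the Marquand--Muller embedding follows from ``negative definite, rank $\leq 21$, no roots,'' but the decisive numerical hypothesis of \cite[Cor.\ 4.19]{MM} is the bound $\rank(L)+l(L)\leq 24$ on the length of the discriminant group; this is exactly the point to which the paper devotes its displayed computation, namely
\[
l(\NS(X_0)_G)\ \leq\ l(\NS(X_0))+l(\NS(X_0)^G)\ \leq\ 2+\rank(\NS(X_0)^G)\ =\ 24-\rank(\NS(X_0)_G),
\]
using that $\NS(X_0)$ has $\sigma_0=1$ so that $l(\NS(X_0))=2$. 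Without this the embedding into the Leech lattice is not justified, so you should add that one line. Everything else (the averaged ample class giving $\rank\NS(X_0)^G\geq 1$ and hence $\rank\Lambda^G\geq 3$, the Riemann--Roch argument excluding roots, the gluing argument for triviality on $A(L)$, and faithfulness via \cite{RS81}) matches the paper's proof; the remark about uniqueness of the embedding is not needed for the statement.
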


\begin{proof}
Note that $G$
acts faithfully on $\NS(X_0)$ (\cite{RS81}).
We know that $\NS(X_0)_G$ is negative definite of rank at most $21$,
since $G$ fixes some ample class on $X_0$.
Moreover, by definition, $G$ leaves no nonzero vector in $\NS(X_0)_G$ invariant,
and $G$ acts trivially on the discriminant group $A({\NS(X_0)_G})$ by Theorem \ref{discri-action}.
Hence $\NS(X_0)_G$ is stable symplectic in the terminology of \cite[\S 4.1]{MM}.

We want to apply \cite[Cor 4.19]{MM} deduce that $\NS(X_0)_G$ embeds primitively into $\Lambda$.
To this end, we have to check that $\NS(X_0)_G$ contains no roots (by construction, cf.\ Theorem \ref{thm:cTorelli})
and that
\begin{eqnarray}
\label{eq:<=24}
\rank(\NS(X_0)_G) + l(\NS(X_0)_G) \leq 24.
\end{eqnarray}
Here $ l(\NS(X_0)_G)$ denotes the length of the discriminant group $A(\NS(X_0)_G)$,
i.e.\ the minimum number of generators,
and the bound in \eqref{eq:<=24} can be seen as follows:
Since $\NS(X_0)_G$ embeds primitively into $\NS(X_0)$, we have
\[
l(\NS(X_0)_G) \leq \underbrace{l(\NS(X_0))}_{=2} + \underbrace{l(\NS(X_0)^G)}_{\leq \rank(\NS(X_0)^G)}
\leq 24-\rank(\NS(X_0)_G),
\]
giving directly the claim.
Now, with the primitive embedding 
$\NS(X_0)_G\hookrightarrow\Lambda$,
we can extend the $G$-action on $\NS(X_0)_G$ by the identity on $(\NS(X_0)_G)^\perp\subset\Lambda$
to a $G$-action on the whole of $\Lambda$ (because $G$ acts trivially on $A({\NS(X_0)_G})$).
Then $\Lambda_G\cong\NS(X_0)_G$ follows by construction, and $\rank(\Lambda^G)=24-\rank(\Lambda_G)\geq 3$ is obvious.
\end{proof}

{
\begin{remark}
\label{rem:p11}
The corollary  offers an independent proof that wild automorphisms on $X_{0,p}$ only occur in characteristic $p\leq 11$,
in agreement with \cite{DK-11}.
Indeed, by \cite{HM290}, all prime divisors of the orders of subgroups of  
$\Co_0=O(\Lambda)$ with  $\rank(\Lambda^G)\geq 3$ satisfy $p\leq 11$.
\end{remark}
}

\subsection{Connection through the Borcherds lattice}
\label{ss:Borcherds}

For a better intuition, we interpret the above results through a joint connection with the Borcherds lattice $L_{1,25}$,
the unique even unimodular lattice 
of signature $(1,25)$.
}

For $p\geq 3$, let us pick up a negative definite lattice $H^{(p)}$ from the genus $\II_{0,4}\, p^{\varepsilon 2}$.
We note that the non-emptiness of this genus follows from 
\cite[chapter 15, Theorem 11]{CS99}, and an explicit form 
of $H^{(p)}$ is given in \cite{Shimada04} for example. 
For $p=2$, we simply put $H^{(2)}=D_4$ for the negative definite root lattice of type $D_4$, whose genus is indeed
$\II_{0,4}\,2_{\II}^{-2}$. 
The next lemma is a special case of the Kneser-Nikulin's gluing method \cite[Lemma 1.6.1]{Nikulin80}.

\begin{lemma}\label{overlattice}
The orthogonal sum $\NS(X_0)\oplus H^{(p)}$ can be embedded into 
$L_{1,25}$ such that $\NS(X_0)$ and $H^{(p)}$ are both primitive.
\end{lemma}

For later use, we record the genus symbol of the negated lattice.

\begin{lemma}\label{-1}
Let $p$ be a prime and $L$ an integral lattice whose $p$-adic symbol is of the form 
\[
1_{(t_0)}^{\varepsilon_0 n_0}p_{(t_1)}^{\varepsilon_1 n_1}\cdots (p^e)_{(t_e)}^{\varepsilon_e n_e}\cdots, \]
where $t_i\in \ZZ/8\ZZ\cup \{\II\}$ appears only when $p=2$.
Then, the $p$-adic symbol of the negated lattice $L(-1)$ is as follows,
\[1_{(t'_0)}^{\varepsilon'_0 n_0}p_{(t'_1)}^{\varepsilon'_1 n_1}\cdots (p^e)_{(t'_e)}^{\varepsilon'_e n_e}\cdots, \]
where $\varepsilon'_i=\left(\frac{-1}{p}\right)^{n_i}\varepsilon_i\ (p\geq 3)$ and 
$\varepsilon'_i=\varepsilon_i,\ t'_i=-t_i\ (t_i\in \ZZ/8\ZZ),\ t'_i=\II\ (t_i=\II)\ (p=2)$.
\end{lemma}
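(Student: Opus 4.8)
The plan is to reduce the whole statement to the behavior of a single Jordan constituent under negation. Since the $p$-adic symbol is read off constituent by constituent, and negation replaces the Gram matrix by its negative without altering $p$-adic valuations, a fixed Jordan splitting of $L\otimes\ZZ_p$ into scale-$p^k$ constituents (as in \cite[Chapter 15]{CS99}) is carried to a Jordan splitting of $L(-1)\otimes\ZZ_p$ with the \emph{same} scales and the \emph{same} ranks. Hence the list $1,p,\hdots,p^e,\hdots$ and the exponents $n_i$ are manifestly unchanged, and it only remains to track the signs $\varepsilon_i$, and for $p=2$ the oddities $t_i$ and the types. The single computation driving everything is that if $M_i$ is the normalized (unimodular) Gram matrix of the $i$-th constituent, of rank $n_i$, then negation sends it to $-M_i$, and $\det(-M_i)=(-1)^{n_i}\det(M_i)$.

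For $p\geq 3$ I would recall that $\varepsilon_i$ is, by definition, the Legendre symbol $\left(\frac{u_i}{p}\right)$ of the unit $u_i=\det(M_i)\in\ZZ_p^\times$ (taken modulo squares). Under negation this unit is multiplied by $(-1)^{n_i}$, so its Legendre symbol is multiplied by $\left(\frac{-1}{p}\right)^{n_i}$, giving $\varepsilon_i'=\left(\frac{-1}{p}\right)^{n_i}\varepsilon_i$ exactly as claimed. For $p=2$ I would treat the three data separately. The type is preserved since $-M_i$ is even if and only if $M_i$ is, so a type II constituent stays type II and its subscript $\mathrm{II}$ is unchanged. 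For a type I constituent I would use that odd unimodular $2$-adic forms are diagonalizable over $\ZZ_2$, writing it as $[a_1]\oplus\cdots\oplus[a_{n_i}]$; negation yields $[-a_1]\oplus\cdots\oplus[-a_{n_i}]$, so the oddity $t_i\equiv\sum_j a_j\pmod 8$ becomes $-t_i$. Finally the sign $\varepsilon_i$, which records whether $\det(M_i)\equiv\pm1$ or $\pm3\pmod 8$, is preserved because multiplication by $(-1)^{n_i}$ maps $\{\pm1\}$ to $\{\pm1\}$ and $\{\pm3\}$ to $\{\pm3\}$ modulo $8$; hence $\varepsilon_i'=\varepsilon_i$.

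The main (and fairly modest) obstacle is the $p=2$ bookkeeping rather than any conceptual difficulty: one must invoke diagonalizability of odd $2$-adic unimodular forms so that the oddity is literally a sum of diagonal units whose sign flips, and one must verify the mod-$8$ invariance of the sign. I would emphasize that the sign-walking and oddity-fusion ambiguities inherent in the canonical Conway--Sloane symbol cause no trouble here, since the negation is carried out on a single fixed Jordan decomposition and all three recorded invariants transform compatibly with those moves; thus reassembling the transformed constituents into the asserted symbol is automatic.
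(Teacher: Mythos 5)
Your proof is correct and follows exactly the route the paper has in mind: the paper simply declares the lemma ``immediate from the definitions of $p$-adic symbols,'' and your argument is the careful constituent-by-constituent verification of that claim (negation preserves the Jordan splitting, scales and ranks; the unit determinant is multiplied by $(-1)^{n_i}$, giving the stated sign rule for $p\geq 3$ and sign-preservation mod $8$ for $p=2$; diagonalizability of odd unimodular $2$-adic forms negates the oddity; type II is preserved). Your closing remark that the computation is performed on one fixed Jordan decomposition, so that sign-walking and oddity-fusion ambiguities are irrelevant, is a worthwhile precision that the paper only addresses in the surrounding discussion.
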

The proof of this lemma is immediate from the definitions of $p$-adic symbols.
An analogous statement holds for the orthogonal complement of $L$ in any unimodular (or $p$-adically unimodular) lattice $L_0$
if $p>2$,
with the unimodular part of the $p$-adic symbol adjusted to the rank and determinant of $L_0$.
In case $p=2$, however, one has to take into consideration the subtleties of oddity fusion and sign walking
{ which prevent a $2$-adic symbol from being determined uniquely by a lattice
 (see \cite[p.\ 381]{CS99}).}

Next, suppose that $G\subset \Aut(X_0)$ is a finite group of symplectic automorphisms acting 
on $X_0$
(acting trivially on the discriminant group $A(\NS(X_0))$ by Theorem \ref{discri-action}).
{ Consider  the sign-reversing isometry 
\[
\delta \colon A(\NS(X_0))\rightarrow 
A(H^{(p)})
\]
which encodes how $L_{1,25}$ arises as an overlattice of $\NS(X_0)\oplus H^{(p)}$ in Lemma \ref{overlattice}.}
Then, regarding $H^{(p)}$ as a lattice with the trivial $G$-action, 
the graph 
of $\delta$ 
is preserved under $G$, whence
the lattice $L_{1,25}$ admits an extended action by $G$ as before. 
Using the notation $L^G$ and $L_G$ for the (co-)invariant lattices as in Section \ref{torelli}, 
we have the following properties.

\begin{proposition}\label{Leech-type}\ \\
\textrm{(1)} $H^{(p)}$ is contained in $L_{1,25}^G$ and $(L_{1,25})_G$ is contained in $\NS(X_0)$. \\
\textrm{(2)} $L_{1,25}^G$ contains an element of strictly positive norm.\\
\textrm{(3)} $(L_{1,25})_G$ is negative definite with no elements of norm $(-2)$. \\
\end{proposition}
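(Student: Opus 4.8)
The plan is to read off all three assertions from the way $G$ acts on the overlattice $L_{1,25}$, using only that $G$ acts trivially on $H^{(p)}$ and on $A(\NS(X_0))$ (Theorem \ref{discri-action}), together with two pieces of geometry: $G$ fixes an ample class, and $G$ permutes the effective divisors of $X_0$. Throughout I use that the extended action on $L_{1,25}$ restricts on the primitive sublattice $\NS(X_0)$ to the original symplectic $G$-action, and is the identity on $H^{(p)}$.

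For (1), the inclusion $H^{(p)}\subset L_{1,25}^G$ is immediate from the construction. For the remaining inclusion I would first determine $(H^{(p)})^{\perp}$ inside $L_{1,25}$: since $\NS(X_0)$ and $H^{(p)}$ are mutually orthogonal primitive sublattices of ranks $22$ and $4$ in the rank-$26$ lattice $L_{1,25}$, a rank count gives $\rank (H^{(p)})^{\perp}=22=\rank\NS(X_0)$, and as both $\NS(X_0)$ and $(H^{(p)})^{\perp}$ are primitive of the same rank they coincide, $(H^{(p)})^{\perp}=\NS(X_0)$. Because $H^{(p)}\subset L_{1,25}^G$, taking orthogonal complements yields $(L_{1,25})_G=(L_{1,25}^G)^{\perp}\subset (H^{(p)})^{\perp}=\NS(X_0)$.

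For (2), the restriction property gives $\NS(X_0)^G=\NS(X_0)\cap L_{1,25}^G\subset L_{1,25}^G$. Averaging an ample class over $G$ produces an integral, $G$-invariant class of strictly positive norm lying in $\NS(X_0)^G$, hence in $L_{1,25}^G$.

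Part (3) then splits into two steps. Negative definiteness is formal: by (2) the sublattice $L_{1,25}^G$ already contains the full one-dimensional positive part of the signature-$(1,25)$ lattice $L_{1,25}$, so its orthogonal complement $(L_{1,25})_G$ is negative definite. The absence of $(-2)$-vectors is the one genuinely geometric step, and I expect it to be the main obstacle. I would first observe that $(L_{1,25})_G\subset\NS(X_0)_G$: by (1) it lies in $\NS(X_0)$, and it is orthogonal to $L_{1,25}^G\supset\NS(X_0)^G$, hence to $\NS(X_0)^G$. So it suffices to rule out a root $\delta\in\NS(X_0)_G$. By Riemann--Roch on the K3 surface $X_0$ (valid in every characteristic), either $\delta$ or $-\delta$ is effective; say $\delta$ is. Then each $g^{*}\delta$ is again effective, so $\sum_{g\in G}g^{*}\delta$ is a non-zero effective, $G$-invariant class. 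But $\delta\in\NS(X_0)_G=(\NS(X_0)^G)^{\perp}$ forces its orthogonal projection $\tfrac{1}{|G|}\sum_{g\in G}g^{*}\delta$ onto $\NS(X_0)^G$ to vanish, a contradiction. This contradiction completes (3).
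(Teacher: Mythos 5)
Your proof is correct and follows essentially the same route as the paper: (1) from the construction of the extended action on $L_{1,25}$, (2) via the orbit sum of an ample class, and (3) by placing $(L_{1,25})_G$ inside $\NS(X_0)_G$ and excluding roots with the standard Riemann--Roch/orbit-sum argument. The paper leaves that last root-exclusion implicit (deferring to the construction and the discussion around Theorems \ref{thm:Leech} and \ref{thm:cTorelli}); you have simply spelled out the details.
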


\begin{proof}
(1) is obvious from the construction. The element in (2) is the orbit sum of any ample divisor on $X_0$. 
(3) 
{ is a consequence of the construction 
as $(L_{1,25})_G\cong\NS(X_0)_G$, 
cf.\ the discussion in the proof of Theorem \ref{thm:Leech}.}
\end{proof}

{
Now Theorem \ref{thm:Leech} implies that the $G$-action on $L_{1,25}$ can be taken
to fix a hyperbolic plane $U\subset (L_{1,25})^G$,
such that $G$ acts effectively on $U^\perp\cong\Lambda$.
Our first attempt to show this was a  variant of the approach in \cite[Proposition 2.2.]{Huybrechts16}, 
whose idea originates in the paper \cite[Appendix B]{GHV12}.
 Note, however, that there was a gap in the original arguments
which was pointed out and  fixed to the extent needed for our (and all other previous) considerations in \cite[Corollary 4.19]{MM}
(cf.\ also \cite{Zheng} for a conceptual extension).}

The relations between sublattices of $L=L_{1,25}$ can be depicted as follows.

\begin{figure}[ht!]
\begin{center}
\begin{picture}(200,90)

\linethickness{0.7pt}
\put(0,80){\line(1,0){200}}
\put(0,50){\line(1,0){200}}
\put(0,20){\line(1,0){200}}

\put(-15,76){$L$}
\put(-40,46){$G\curvearrowright L$}
\put(-36,16){\text{Leech}}

\linethickness{0.5pt}
\multiput(0,78)(0,-6){10}{\line(0,-1){3}}
\multiput(100,78)(0,-6){10}{\line(0,-1){3}}
\multiput(140,78)(0,-6){6}{\line(0,-1){3}}
\multiput(125,53)(0,-6){6}{\line(0,-1){3}}
\multiput(200,83)(0,-6){11}{\line(0,-1){3}}

\linethickness{1.2pt}

\put(0,85){\line(0,-1){10}}
\put(140,85){\line(0,-1){10}}
\put(200,85){\line(0,-1){10}}
\put(60,83){$\NS(X_0)$}
\put(160,83){$H^{(p)}$}
\put(0,55){\line(0,-1){10}}
\put(100,55){\line(0,-1){10}}
\put(200,55){\line(0,-1){10}}
\put(40,53){$L_G$}
\put(146,53){$L^G$}
\put(0,25){\line(0,-1){10}}
\put(100,25){\line(0,-1){10}}
\put(125,25){\line(0,-1){10}}
\put(200,25){\line(0,-1){10}}
\put(40,23){$\Lambda_G$}
\put(155,23){$\Lambda^G$}
\put(109,23){$U$}

\end{picture}
\vspace{-.5cm}
\caption{Primitive sublattices of $L=L_{1,25}$}\label{subofL}
\end{center}
\label{fig:1,25}
\end{figure}

%

From Figure \ref{subofL}, we see that the primitive hull of $\Lambda_G\oplus H^{(p)}$ in $L$
is the orthogonal complement of $\NS(X_0)^G$ in $L$. 
By \cite[Prop.\ 1.6.1]{Nikulin80}, there exists a sign-reversing isometry $\gamma$ between subgroups of the discriminant groups $A(\Lambda_G)$ and $A(H^{(p)})$ such that the graph $\Gamma$ of 
$\gamma$ is a totally isotropic subgroup of $A(\Lambda_G)\oplus A(H^{(p)})$ and 
$\Gamma^{\perp}/\Gamma$ is isomorphic to the discriminant group $A(\NS(X_0)^G)$ with negated quadratic form. 
Our framework is that, under the knowledge of $A(\Lambda_G)$ (by \cite{HM290}) and $A(H^{(p)})$, 
we can look for possible $\gamma$ with the computation of $\Gamma^{\perp}/\Gamma$, thus reducing 
the problem to finite abelian groups with quadratic forms. However, since we know the ranks and signatures 
of $\Lambda_G$ and $H^{(p)}$, \cite[Corollary 1.9.4]{Nikulin80} shows in turn that the finite quadratic form
$\Gamma^{\perp}/\Gamma$ determines the genus of the even lattice $(\NS(X_0)^G)^{\perp}$. Therefore, 
we will indicate this change of genera under $\Lambda_G\oplus H^{(p)}\subset (\NS(X_0)^G)^{\perp}$ by the symbol $\rightsquigarrow$ using the Conway-Sloane's genus symbol, as seen many times in the following 
subsections.

 We conclude this section with some useful properties related to discriminant groups.
For a prime $p$ and a nondegenerate integral lattice $L$, we define the $p$-length 
using the $p$-Sylow subgroup $A(L)_p$ of the discriminant group $A(L)$:
\[
l_p(L)=\text{(the number of minimal generators of $A(L)_p$).}
\]
This is easily read off from the $p$-adic genus symbol of $L$ as follows. If the symbol is as in Lemma \ref{-1},
then \[l_p(L)=n_1+n_2+\cdots+n_e+\cdots.\]
It is clear then that $l_p(L)\leq \mathrm{rank} (L)$. 

\begin{corollary}\label{length-condition}
Let $p$ be the characteristic of the base field of $X_0$.\\
(1) The graph $\Gamma\subset A(\Lambda_G)\oplus A(H^{(p)})$  of $\gamma$
above
is $p$-elementary subgroup of order either 
$1$, $p$ or $p^2$.\\
(2) For primes $q$ different from $p$, the $q$-Sylow subgroup $A(\NS(X_0)^G)_q$ 
coincides with the negative of the form  
$A(\Lambda_G)_q$. In particular, we have $l_q(\Lambda_G)\leq \mathrm{rank} (\NS(X_0)^G)  = \mathrm{rank}(\Lambda^G)-2.$
\end{corollary}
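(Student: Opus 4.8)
The plan is to leverage the fact that the discriminant group $A(H^{(p)})$ is $p$-elementary of rank exactly $2$. Indeed, by construction $H^{(p)}$ lies in the genus $\II_{0,4}\,p^{\varepsilon 2}$ for $p\geq 3$ (respectively $\II_{0,4}\,2_{\II}^{-2}$ with $H^{(2)}=D_4$ for $p=2$), so that $A(H^{(p)})\cong(\ZZ/p\ZZ)^2$. For part (1), recall that $\gamma$ is an isomorphism from a subgroup of $A(\Lambda_G)$ onto a subgroup $H_2\subseteq A(H^{(p)})$, and that its graph $\Gamma$ projects isomorphically onto $H_2$ under the second coordinate. Since $H_2$ is a subgroup of the $p$-elementary group $A(H^{(p)})$ of order $p^2$, it is itself $p$-elementary of order $1$, $p$ or $p^2$, and hence so is $\Gamma$. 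This settles (1); it is essential that we argue on the $p$-elementary side $H^{(p)}$, as $A(\Lambda_G)$ itself need not be $p$-elementary.

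For part (2) I would work one primary component at a time. A finite quadratic form splits as the orthogonal direct sum of its $\ell$-primary parts over all primes $\ell$, because the $\QQ/\ZZ$-valued bilinear pairing between two elements of coprime order necessarily vanishes; consequently the operations of taking orthogonal complements and quotients are compatible with this decomposition. Fix a prime $q\neq p$. Since $A(H^{(p)})$ is $p$-elementary, its $q$-part vanishes, so the $q$-part of $A(\Lambda_G)\oplus A(H^{(p)})$ is simply $A(\Lambda_G)_q$. By (1) the graph $\Gamma$ is $p$-elementary, whence $\Gamma_q=0$ and therefore $(\Gamma^\perp)_q=A(\Lambda_G)_q$. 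It follows that $(\Gamma^\perp/\Gamma)_q=A(\Lambda_G)_q$ as quadratic forms. Combining this with the isomorphism $\Gamma^\perp/\Gamma\cong A(\NS(X_0)^G)$ with negated form established just before the corollary, I read off that $A(\NS(X_0)^G)_q$ is isometric to $A(\Lambda_G)_q$ with negated form, which is precisely the first assertion of (2).

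Finally, forgetting the quadratic forms turns the isometry $A(\NS(X_0)^G)_q\cong A(\Lambda_G)_q(-1)$ into a group isomorphism, so the two sides have the same number of minimal generators: $l_q(\Lambda_G)=l_q(\NS(X_0)^G)\leq\rank(\NS(X_0)^G)$, where the inequality is the general bound $l_q(L)\leq\rank(L)$ recorded above. For the remaining identity $\rank(\NS(X_0)^G)=\rank(\Lambda^G)-2$, I would combine the rank decompositions $\rank(\NS(X_0)^G)+\rank(\NS(X_0)_G)=22$ and $\rank(\Lambda^G)+\rank(\Lambda_G)=24$ with the isometry $\NS(X_0)_G\cong\Lambda_G$ from Theorem \ref{thm:Leech}; eliminating the common rank of the two coinvariant lattices yields the claim.

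The argument is essentially bookkeeping once the gluing setup of the previous subsection is in place. The only point requiring genuine care is the compatibility of the primary decomposition of finite quadratic forms with gluing and orthogonal complements, i.e.\ the orthogonality of distinct primary components; this is exactly what permits the clean separation of the prime $q\neq p$ from the $p$-elementary glue, and so I expect it to be the main (if mild) obstacle.
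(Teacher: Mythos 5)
Your proposal is correct and follows essentially the same route as the paper's (very terse) proof: part (1) from the fact that $\Gamma$ is isomorphic to its image in the $p$-elementary group $A(H^{(p)})\cong(\ZZ/p\ZZ)^2$, and part (2) from the orthogonality of distinct primary components (the paper's "Jordan decomposition"), which lets the $q$-part pass untouched through $\Gamma^\perp/\Gamma$. You merely spell out the bookkeeping that the paper leaves implicit, including the rank identity via $\NS(X_0)_G\cong\Lambda_G$.
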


\begin{proof}
Statement (1) follows since the abelian group $A(H^{(p)})$ is $p$-elementary of order $p^2$ and $H^{(p)}$ 
is a primitive sublattice of $L_{1,25}$. 
Since the Sylow subgroups at different primes are orthogonal to each other
(Jordan decomposition), (2) follows by the same reasoning.
\end{proof}

\section{Outline of the  proofs of the main theorems}
\label{s:plan}

In what follows, we will consider the groups $G$ acting on $\Lambda$ with $\Lambda^G$ of rank three at least
(by Theorem \ref{cor:rk<=3}).
We use the classification of \cite{HM290} to check whether $\Lambda_G$ embeds into $\NS(X_{0,p})$ at some suitable prime,
though sometimes, especially if $p=2$, 
it will be more straight forward to check whether $\Lambda_G\oplus H^{(p)}$ embeds into $L_{1,25}$.
If it does not embed for some $p$, then the given $G$-action on $\Lambda$ can be excluded
(though $G$ may admit another admissible action on $\Lambda$).
If there is an embedding, then we can appeal to the crystalline Torelli theorem (Theorem \ref{thm:cTorelli})
or to our explicit geometric models (Section \ref{s:models}) to realize the symplectic $G$-action
(and the action of all subgroups of $G$).

In more detail, the arguments are organized as follows:
\begin{enumerate}
\item[Section \ref{s:cond}]
develops fairly general lattice criteria
which will be applied throughout the remainder of this paper;
with this, many group actions can be excluded immediately 
 (recorded in \eqref{eq6.1}, \eqref{eq6.2}, \eqref{eq6.3}).

\item[Section \ref{s:rk3}]
treats the case $\rank\Lambda^G=3$,  recovering in particular Kond\=o's examples from \cite{Kondo}.
For use in characteristic $p=2$, we also develop an analogue of the Witt cancellation theorem (\S \ref{ss:Witt}),
 leading to Table \ref{table2}.

\item[Section \ref{s:rk4=tame}]
covers the tame groups $G$ with $\rank \Lambda^G=4$
 (Theorem \ref{thm:tame}, Table \ref{tab});
at the same time, this provides an independent confirmation of the results in \cite{DKeum}.

\item[Section \ref{s:rk4=wild}]
treats  the wild groups $G$ with $\rank \Lambda^G=4$  (with $p\leq 11$),
thus completing the analysis in rank 4.
{ The main subtlety is that also subgroups of $M_{23}$ with 4 orbits and $\Lambda^G$ of square determinant  show up
(which were ruled out in the tame case by condition \eqref{eq:Legendre}, see \ref{ss:square}),
but we also check for each other subgroup of $M_{23}$ with 4 orbits (from Table \ref{tab})
in which small characteristics it is realized on $X_{0,p}$ (Table \ref{table7}).
}

\item[Section \ref{s:rk5}]
covers all remaining cases with $\rank \Lambda^G\geq 5$
needed for our main theorems: 
with all the preliminary work, it is easy to verify that  they all can be realized on $X_{0,p}$ if $p>2$
(Proposition \ref{prop:Mukai}).
We also give another independent proof of Mukai's results \cite{Mukai} over $\CC$
 (Theorem \ref{thm:equiv} in \S \ref{ss:Mukai}).

\item[Section \ref{s:models}]
completes the ingredients needed for the proofs of our main theorems
by exhibiting explicit realizations of the required group actions on the supersingular K3 surfaces 
of Artin invariant $\sigma_0=1$
in characteristics 2 (\S\S \ref{ss:2}, \ref{s:Aut(S_6)}) and 3 (\S \ref{ss:3})
where no crystalline Torelli theorem is available.

\item[Section \ref{s:proofs}]
collects the proofs of all theorems from the introduction and concludes the paper.
\end{enumerate}

\section{Lattice conditions}
\label{s:cond}

In this section, we collect useful criteria for immediate and later application.
 The first criteria work for  supersingular K3 surfaces $X$ of arbitrary Artin invariant $\sigma_0$
by sorting out those $G$-actions on $\Lambda$ such that there may be a primitive embedding
$\Lambda_G\hookrightarrow\NS(X)$, as required for a geometric $G$-action on $X_{0,p}$ by 
Theorem \ref{cor:rk<=3}.
For future use, we call a lattice $L$ $p$-unimodular if $p\nmid\det(L)$,
i.e.\ if the $p$-adic lattice $L\otimes_{\ZZ} \ZZ_p$ is unimodular.

\subsection{Length criterion}
\label{ss:length}

\begin{lemma}
\label{lem:p_vs_rk}
Let $G$ be a subgroup of $O(\Lambda)$.
If there is some prime $p$ such that 
\begin{eqnarray}
\label{eq:rk-2}
l_p(\Lambda_G)>\rank \Lambda^G-2,
\end{eqnarray}
then the $G$-action on $\Lambda$ may only be realized by a symplectic action on a supersingular K3 surface
in characteristic $p$.
\end{lemma}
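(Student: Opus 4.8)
The plan is to prove the contrapositive: I will show that the given $G$-action on $\Lambda$ cannot be realized by a symplectic action on a supersingular K3 surface in any characteristic $q\neq p$, so that characteristic $p$ remains the only possibility. So suppose, for contradiction, that some supersingular K3 surface $X$ in characteristic $q\neq p$ (of arbitrary Artin invariant $\sigma_0$) carries a symplectic $G$-action realizing the prescribed action on $\Lambda$. By the defining requirement of such a geometric realization (cf.\ Theorem \ref{cor:rk<=3}), the coinvariant lattice $\NS(X)_G$ is isometric to $\Lambda_G$ and embeds primitively into $\NS(X)$, with orthogonal complement $N=(\Lambda_G)^\perp\subset\NS(X)$.

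First I would record the elementary rank bookkeeping. Since $\NS(X)$ has rank $22$ and $\rank\Lambda=24$, primitivity gives
\[
\rank N=22-\rank\Lambda_G=22-(24-\rank\Lambda^G)=\rank\Lambda^G-2.
\]
The decisive structural input is that $\NS(X)$ is $p$-unimodular: its discriminant equals $\pm q^{2\sigma_0}$ with $q\neq p$, so $p\nmid\det\NS(X)$ and hence the $p$-Sylow subgroup $A(\NS(X))_p$ vanishes.

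The heart of the argument is then a gluing computation at the prime $p$, exactly in the spirit of Corollary \ref{length-condition}(2). Applying the Kneser--Nikulin gluing theory \cite{Nikulin80} to the primitive embedding $\Lambda_G\hookrightarrow\NS(X)$ with complement $N$, the discriminant form $A(\NS(X))$ is recovered as $\Gamma^\perp/\Gamma$ for the graph $\Gamma$ of a sign-reversing isometry between subgroups of $A(\Lambda_G)$ and $A(N)$. Passing to $p$-Sylow subgroups, which are mutually orthogonal under the Jordan decomposition, and using $A(\NS(X))_p=0$, the gluing subgroup at $p$ must exhaust both $p$-parts, so the gluing isometry restricts to an isomorphism $A(\Lambda_G)_p\cong A(N)_p$ of full $p$-Sylow subgroups. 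Consequently
\[
l_p(\Lambda_G)=l_p\bigl(A(\Lambda_G)_p\bigr)=l_p\bigl(A(N)_p\bigr)\leq\rank N=\rank\Lambda^G-2,
\]
which contradicts the hypothesis $l_p(\Lambda_G)>\rank\Lambda^G-2$. Hence no realization in characteristic $q\neq p$ is possible, and the lemma follows.

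The step I expect to be the main (if mild) obstacle is making the last gluing assertion precise: namely that $p$-unimodularity of the ambient lattice $\NS(X)$ forces the glue to identify the \emph{entire} $p$-Sylow subgroups $A(\Lambda_G)_p$ and $A(N)_p$, rather than merely a partial overlap. This is precisely the mechanism already used to establish Corollary \ref{length-condition}(2) for $X_0$, and the same reasoning transfers verbatim to arbitrary Artin invariant, since only the $p$-unimodularity of $\NS(X)$ and the rank of the complement $N$ enter the estimate.
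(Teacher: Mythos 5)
Your proof is correct and follows essentially the same route as the paper, which simply invokes the analogue of Corollary \ref{length-condition}(2) for arbitrary Artin invariant: in characteristic $q\neq p$ the lattice $\NS(X)$ is $p$-unimodular, so $\Lambda_G$ must glue to its orthogonal complement along their full $p$-Sylow subgroups, forcing $l_p(\Lambda_G)\leq\rank\Lambda^G-2$. Your explicit justification that the glue group exhausts both $p$-parts is exactly the mechanism the paper leaves implicit.
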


\begin{proof}
This follows directly from the analogue of Corollary \ref{length-condition} for arbitrary Artin invariant $\sigma_0$
as $\NS(X)$ is always $p'$-unimodular at any prime $p'\neq p$.
Indeed, if $G$ acts on $X$ in characteristic $\neq p$,
then $\Lambda_G$ glues to its orthogonal complement inside $\NS(X)$ along their $p$-Sylow groups,
so
\[
l_p(\Lambda_G) = l_p((\Lambda_G)^\perp_{\NS(X)}) \leq \rank((\Lambda_G)^\perp_{\NS(X)}) = \rank(\Lambda^G)-2,
\]
contradicting \eqref{eq:rk-2}.
\end{proof}

\begin{corollary}
\label{cor:pvsq}
If some $G$-action on $\Lambda$ admits two primes as in Lemma \ref{lem:p_vs_rk},
then it does not correspond to
a symplectic action on any supersingular K3 surface.
\end{corollary}

Lemma \ref{lem:p_vs_rk} determines $p$ in several cases of small rank (see Section \ref{s:rk3}).
Meanwhile Corollary \ref{cor:pvsq} directly rules out the $G$-actions from \cite{HM290} with no.s
\begin{eqnarray}
\label{eq6.1}
\begin{matrix}
62, 96, 140, 142, 150, 151, 155 - 159, 161, 176, 187, 191,
\\ 195,
 198, 200, 204, 206, 208 - 211, 214, 217 - 220.
 \end{matrix}
\end{eqnarray}

\subsection{Constituent criterion}
\label{ss:constituent}

\begin{lemma}
\label{lem:constituent}
Assume that there is some prime $p$ such that $l_p(\Lambda_G) = \rank \Lambda^G$.
If 
\begin{eqnarray}
\label{eq:ee}
\sum_{e>1} n_e > \rank \Lambda^G-2
\end{eqnarray}
where the $n_e$ are as in Lemma \ref{-1} for $\Lambda^G$ or equivalently $\Lambda_G$,
then the $G$-action on $\Lambda$ cannot 
correspond to a symplectic action on 
a supersingular K3 surface. 
\end{lemma}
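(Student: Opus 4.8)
The plan is to argue by contradiction. Suppose the given $G$-action on $\Lambda$ were realized by a symplectic action on some supersingular K3 surface $X$; by Theorem \ref{cor:rk<=3} this produces a primitive embedding $\Lambda_G\hookrightarrow\NS(X)$. My first step is to pin down the characteristic. Since the hypothesis gives $l_p(\Lambda_G)=\rank\Lambda^G>\rank\Lambda^G-2$, Lemma \ref{lem:p_vs_rk} forces $X$ to live in characteristic $p$, so that $\NS(X)$ is $p$-elementary; concretely, $p\,\NS(X)^\vee\subseteq\NS(X)$.

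Next I would set $M=(\Lambda_G)^\perp_{\NS(X)}$, a lattice of rank $\rank\NS(X)-\rank\Lambda_G=\rank\Lambda^G-2$, and invoke the Kneser--Nikulin gluing theory \cite[Prop.\ 1.6.1]{Nikulin80}: the overlattice $\NS(X)\supseteq\Lambda_G\oplus M$ is encoded by the graph $\Gamma\subseteq A(\Lambda_G)\oplus A(M)$ of an anti-isometry $\gamma\colon H_1\xrightarrow{\sim}H_2$ between subgroups $H_1\subseteq A(\Lambda_G)$ and $H_2\subseteq A(M)$, where $H_1$ is the image of $\NS(X)/(\Lambda_G\oplus M)$ under the projection to $A(\Lambda_G)$.

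The heart of the argument, and the step I expect to be the main obstacle, is to show that the $p$-elementarity of $\NS(X)$ forces $p\,A(\Lambda_G)\subseteq H_1$, i.e.\ that the entire non-$p$-elementary part of the discriminant group of $\Lambda_G$ must be consumed by the gluing with $M$. Here I would use that primitivity makes the restriction $\NS(X)^\vee\twoheadrightarrow\Lambda_G^\vee$ surjective, so that any $\alpha\in\Lambda_G^\vee$ lifts to some $\beta\in\NS(X)^\vee$ of the form $\beta=\alpha\oplus b$ in $\Lambda_G^\vee\oplus M^\vee$; then $p\beta\in\NS(X)$ exhibits $p\bar\alpha$ as an element of $H_1$, and letting $\alpha$ range over $\Lambda_G^\vee$ yields the claim.

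Finally I would convert this into a length inequality. Writing the $p$-adic symbol of $\Lambda_G$ as in Lemma \ref{-1}, the $p$-part $p\,A(\Lambda_G)_p$ is a direct sum of $\sum_{e>1}n_e$ nontrivial cyclic groups, so $l_p\big(p\,A(\Lambda_G)_p\big)=\sum_{e>1}n_e$; since $p$-length is monotone under inclusion of finite abelian $p$-groups, the containment above gives $l_p(H_1)\geq\sum_{e>1}n_e$. On the other hand $H_1\cong H_2\subseteq A(M)$ yields $l_p(H_1)\leq l_p(M)\leq\rank M=\rank\Lambda^G-2$. Combining the two bounds forces $\sum_{e>1}n_e\leq\rank\Lambda^G-2$, contradicting \eqref{eq:ee} and completing the argument.
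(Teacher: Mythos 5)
Your proposal is correct, and its overall skeleton matches the paper's: pin down the characteristic via Lemma \ref{lem:p_vs_rk}, pass to the gluing of $\Lambda_G$ with its orthogonal complement $M=(\Lambda_G)^\perp_{\NS(X)}$ of rank $\rank\Lambda^G-2$, and derive a contradiction from the length bound $l(H_1)=l(H_2)\leq \rank M$. The decisive step, however, is executed differently. The paper first uses the hypothesis $l_p(\Lambda_G)=\rank\Lambda^G$ together with \eqref{eq:ee} to deduce $n_1\leq 1$, so that the second-smallest invariant factor $q_2$ of $A(\Lambda_G)_p$ exceeds $p$; it then argues that after gluing along a subgroup of length at most $\rank\Lambda^G-2$, at least two constituents with orders $\geq q_1,q_2$ survive into $A(\NS(X))$, which therefore cannot be $p$-elementary. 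You instead exploit $p$-elementarity at the source: the surjectivity of $\NS(X)^\vee\twoheadrightarrow\Lambda_G^\vee$ (from primitivity) plus $p\,\NS(X)^\vee\subseteq\NS(X)$ shows directly that $p\,A(\Lambda_G)\subseteq H_1$, whence $l_p(H_1)\geq\sum_{e>1}n_e$, contradicting $l_p(H_1)\leq\rank\Lambda^G-2$. Your version buys a fully explicit justification where the paper's ``surviving constituents'' claim is left somewhat informal (tracking invariant factors through the quotient $\Gamma^\perp/\Gamma$ requires care), and it renders the paper's preliminary observation $q_2>p$ unnecessary; the paper's version, in exchange, records the structural fact $n_1\leq 1$, which is of some independent interest when scanning the tables of \cite{HM290}. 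Both arguments are sound.
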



\begin{proof}
Let $r=\rank \Lambda^G$, exceeding the rank of the purported invariant lattice $\NS(X_0)^G$  by 2,
so that any supersingular K3 surface $X$ with a symplectic $G$-action lives in characteristic $p$ by Lemma \ref{lem:p_vs_rk}.
Denote the minimal orders of generators of the Sylow group $A(\Lambda_G)_p$ by $q_1\leq \hdots \leq q_r$.
Here $q_2>p$, since otherwise $n_1\geq 2$ would give, using \eqref{eq:ee},
\[
l_p(\Lambda_G) = \sum_{e\geq 1} n_e > \rank(\Lambda^G) \geq l_p(\Lambda^G),
\]
contradicting the equality of the two $p$-lengths.
To embed into $\NS(X_{0})$,
the coinvariant lattice $\Lambda_G$ has to glue to $\NS(X)$, of rank $r-2$,
along a subgroup of $A(\Lambda_G)$ 
whose length does not exceed $r-2$.
For length reasons, it follows that the resulting overlattice $\NS(X)$ has some constituents with $q_1'\geq q_1, q_2'\geq q_2$
but then it cannot be $p$-elementary since $q_2>p$ by what we have seen above.
\end{proof}

Applied to the group actions from \cite{HM290} at the prime $p=2$, we can rule out the following no.s:
\begin{eqnarray}
\label{eq6.2}
\begin{matrix}
57, 64,  78, 86, 93, 107, 117, 130, 132, 143, 147, 153, 160, 164, 174,\\
177, 181, 184, 185, 190, 192, 196, 197, 202, 207, 213, 215, 216, 221.
\end{matrix}
\end{eqnarray}

%

\begin{remark}
At $p=2$, the genus symbols in \cite{HM290}
do not always attain the correct form according to \cite[\S 15.7]{CS99} (cf.\ the discussion in \cite{Allcock-memoir}),
but we correct this on the way whenever necessary.
For instance, for no.\ 197, \cite{HM290} lists
$2_3^{+1}4_1^{+1}16_7^{+1}$,
but the trace of the order $2$ constituent is clearly not compatible with the determinant
as $3$ is a non-square modulo $8$.
The correct genus symbol reads $2_1^{+1} 4_1^{+1} 16_1^{+1}$.
Anyway, for this case, the precise values turn out to be irrelevant 
for applying Lemma \ref{lem:constituent}.
\end{remark}

\subsection{Determinant condition}
\label{ss:det}

\begin{lemma}
\label{lem:det}
Assume that $\Lambda_G\hookrightarrow\NS(X_{0,p})$
such that $l_{p'}(\Lambda_G^\perp) = \rank \Lambda^G-2$
at some prime $p'$. 
Write $d'$ for the prime-to $p'$-part of $-\det \Lambda_G$.
Then
\begin{eqnarray}
\label{eq:det-cond-p'}
\left(\frac{d'}{p'}\right) = \prod_{q'>1} \varepsilon_{q'}
\end{eqnarray}
where the product runs over the signs of the $p'$-adic constituents of $\Lambda_G^\perp$.
\end{lemma}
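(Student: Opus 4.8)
The plan is to reduce the statement to the multiplicativity of determinants under an orthogonal splitting, avoiding any use of the Nikulin gluing data. Set $N=\NS(X_{0,p})$ and let $M=\Lambda_G^\perp$ be the orthogonal complement of $\Lambda_G$ in $N$. Since $\rank\Lambda_G+\rank\Lambda^G=24$ while $\rank N=22$, I have $\rank M=22-\rank\Lambda_G=\rank\Lambda^G-2$. Thus the hypothesis $l_{p'}(M)=\rank\Lambda^G-2$ says exactly that the $p'$-length of $M$ equals its rank, i.e.\ that the $p'$-adic Jordan decomposition of $M$ carries no unimodular (scale $p'^0$) constituent. In particular the product $\prod_{q'>1}\varepsilon_{q'}$ over the signs of the $p'$-adic constituents of $M$ already runs over \emph{all} $p'$-adic Jordan signs of $M$.

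For an odd prime $p'$ and a nondegenerate lattice $L$, write $u_L$ for the $p'$-adic unit part of $\det L$. First I would recall the standard product formula (see \cite[Chapter 15]{CS99}): the Legendre symbol $\left(\frac{u_L}{p'}\right)$ equals the product of the signs of the $p'$-adic Jordan constituents of $L$. Applied to $M$, and using that $M$ has no unimodular constituent, this gives
\[
\prod_{q'>1}\varepsilon_{q'}=\left(\frac{u_M}{p'}\right).
\]

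The core computation is then pure determinant bookkeeping. The mutually orthogonal sublattices $\Lambda_G,M\subset N$ have $\Lambda_G\oplus M$ of finite index $m=[N:\Lambda_G\oplus M]$, whence $\det\Lambda_G\cdot\det M=m^2\det N$. Passing to $p'$-adic unit parts and Legendre symbols, the perfect square $m^2$ contributes trivially, so
\[
\left(\frac{u_{\Lambda_G}}{p'}\right)\left(\frac{u_M}{p'}\right)=\left(\frac{u_N}{p'}\right).
\]
Because $N$ has determinant $-p^2$ (it belongs to the genus $\II_{1,21}\,p^{\varepsilon 2}$, respectively $\II_{1,21}\,2_{\II}^{-2}$ for $p=2$), its $p'$-adic unit part satisfies $\left(\frac{u_N}{p'}\right)=\left(\frac{-1}{p'}\right)$, the factor $p^2$ being a square. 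Finally, by definition $d'$ is the prime-to-$p'$ part of $-\det\Lambda_G$, hence $d'\equiv -u_{\Lambda_G}$ as $p'$-adic units, so $\left(\frac{d'}{p'}\right)=\left(\frac{-1}{p'}\right)\left(\frac{u_{\Lambda_G}}{p'}\right)$. Combining the three displays yields
\[
\left(\frac{d'}{p'}\right)=\left(\frac{-1}{p'}\right)\left(\frac{u_{\Lambda_G}}{p'}\right)=\left(\frac{u_M}{p'}\right)=\prod_{q'>1}\varepsilon_{q'},
\]
which is the asserted identity.

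I expect the only real subtlety to be the careful tracking of signs and unit parts: one must check that the factor $\left(\frac{-1}{p'}\right)$ produced by $\det N=-p^2$ matches the $-1$ entering through the definition of $d'$ as the prime-to-$p'$ part of $-\det\Lambda_G$ (rather than of $\det\Lambda_G$), and that the full-length hypothesis is precisely what prevents a unimodular constituent of $M$ from being silently dropped on the right-hand side. Note that the argument is uniform in whether or not $p'=p$, since only the value $\det N=-p^2$ is used, and that $p'$ is taken to be odd throughout so that the Legendre symbols and the product formula are meaningful.
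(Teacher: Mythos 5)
Your proof is correct and is essentially the paper's own argument made explicit: the paper likewise combines $\det\NS(X_{0,p})=-p^2$ with the multiplicativity of determinants up to the square of the glue index (so that the square classes of $\det\Lambda_G$ and $\det\Lambda_G^\perp$ agree up to sign), then invokes the standard $p'$-adic determinant condition of Conway--Sloane, using the rank-equals-length hypothesis to ensure no unimodular constituent is silently omitted from the product. The only discrepancy is your restriction to odd $p'$: the paper applies the lemma at $p'=2$ as well (e.g.\ to exclude no.\ 186 via $(-3/2)=-1$), which is legitimate because Conway--Sloane's determinant condition at $2$ reads $\prod_{q'}\varepsilon_{q'}=\left(\frac{d'}{2}\right)$ with the Kronecker-type symbol ($+1$ iff $d'\equiv\pm1\bmod 8$) and the product of all $2$-adic signs is invariant under sign walking, so your argument extends verbatim once that convention is adopted.
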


\begin{proof}
Since $\det(\NS(X_{0,p}))=-p^2$, the square classes of the determinants of $\Lambda_G$ and $\Lambda_G^\perp$
agree up to sign. Thus $d'$ gives the prime-to $p'$-part of the square class of $\Lambda_G^\perp$,
and \eqref{eq:det-cond-p'} is just the standard $p'$-adic determinant condition  (cf.\ \cite[p.\ 383]{CS99}).
Here we use that 
$\Lambda_G^\perp$ has  no unimodular $p'$-adic constituent ($q'=1$)
because rank and length agree by assumption.
\end{proof}

The determinant condition can also be applied in case $p=p'$,
but it is most readily used when $p'\neq p$ (where $p$ could be determined by Lemma \ref{lem:p_vs_rk})
 and $l_{p'}(\Lambda_G)=\rank(\Lambda^G)-2$,
for then  the $p'$-adic genus symbol of $\Lambda_G^\perp$ is obtained from that of $\Lambda_G$
as in Lemma \ref{-1} (with the usual extra care because of sign walking if $p'=2$).
This directly excludes the $G$-actions from \cite{HM290} with no.s
\begin{eqnarray}
\label{eq6.3}
104, 114, 127, 136,  138, 139, 144, 166, 168, 173, 186, 189,  199.
\end{eqnarray}
(For no.\ 186 with symbol $4_1^{+1}3^{+3}$, 
we note that the single non-unimodular $2$-adic constituent $4_1^{+1}$ does not allow for any sign walks
by \cite[pp.\ 381/2]{CS99},
so $\Lambda_G^\perp$ would  have symbol $4_7^{+1}$, but $(-3/2)=-1$.)

We will see more applications of this criterion throughout this paper,
notably in Proposition \ref{prop:tame}.
We emphasize that it can equally well be applied to the primitive closure
of $\Lambda_G\oplus H^{(p)}$ embedding into $L_{1,25}$.
This will be especially useful in characteristic $p=2$, see \ref{ss:p=2,rk=3}.

\section{Classification in Rank 3} 
\label{s:rk3}

Let $G$ be a group acting on $\Lambda$ such that rank$(\Lambda^G)=3$.
For the  cases remaining after \ref{ss:length}, \ref{ss:constituent}, \ref{ss:det}, we will proceed with a close inspection of the discriminant groups.

%
%

\subsection{The discriminant group  $A(\Lambda_G)$ has length $1$}
\label{ss:length=1}

In this case, the characteristic of $k$ a priori can be any prime $p$ dividing $\det(\Lambda_G)$,
but for $\Lambda_G$ to glue to some positive vector $v$ (with $v^2=|\det(\Lambda_G)|$) to give $\NS(X_0)$,
the quadratic forms on the $p'$-Sylow groups of the discriminant groups of $\Lambda_G$ and $\ZZ v$ have to agree up to sign
for any prime $p'\neq p$.
It turns out that for each $G$-action with discriminant group  $A(\Lambda_G)$ of length $1$, 
this determines the characteristic $p$ uniquely.
This is easily checked using the genus symbol as indicated in Table \ref{table1}.

\begin{table}[ht!]
$$
\begin{array}{c|c|c|c|c|c|c}
\text{no.} & G & \text{genus symbol of } \Lambda_G & v^2 & \text{genus symbol of } \ZZ v & \text{glues at} & p\\
\hline
165 & M_{22} & 4_5^{-1}11^{+1} & 44 & 4_3^{-1}11^{+1} & 2 & 11\\
170 & M_{21}.2_2 & 4_3^{-1}3^{-1}7^{-1} & 84 & 4_5^{-1}3^{+1}7^{-1} & 2,3 & 7\\
172 & 2^4.\mathfrak A_7 & 8_1^{+1}7^{+1} & 56 & 8_7^{+1}7^{+1} & 2 & 7\\
175 & \mathfrak A_8 & 4_1^{+1}3^{+1}5^{+1} & 60 & 4_7^{+1}3^{+1}5^{-1} & 2,3& 5\\
182 & M_{11} & 
\begin{cases}
2_7^{+1}3^{-1}11^{-1}\\
2_3^{-1}3^{-1}11^{-1}\end{cases} & 66 & 2_1^{+1}3^{+1}11^{-1} & 
\begin{cases}2,3\\
3
\end{cases} & 
\begin{cases}
11\\ -
\end{cases}
\\
183 & 
2^4 :(3\times\mathfrak A_5):2 & 8_1^{+1}3^{-1}5^{-1} & 120 & 8_7^{+1}3^{+1}5^{+1} & 2,3 & 5\\
\end{array}
$$
\caption{6 groups acting symplectically on some $X_{0,p}$}
\label{table1}
\end{table}

\begin{summary}
The  groups in  Table \ref{table1} admit no symplectic action on any supersingular K3 surface
in characteristic $p'\neq p$ (the prime indicated in the last column),
but by Theorem \ref{thm:cTorelli} they can be realized symplectically in characteristic $p$.
\end{summary}

\begin{remark}
Kond\=o asserted this already for the simple and perfect groups in \cite{Kondo}.
However, he follows \cite[Table 10.3]{CS99} to list one of the maximal subgroups as
$2^4:(3\times \mathfrak S_5)$.
Since this group contains elements of order $12$, but $M_{23}$ doesn't (cf.\ Table \ref{tab:23}),
this cannot be correct. 
The group structure given in \cite{Atlas}, 
$2^4 :(3\times\mathfrak A_5):2$,
appears to be correct
(while it is listed as $[2^43].S_5$
under $\#183$ in \cite{HM290}).
\end{remark}

\subsection{There is an odd prime $p$ such that $l_p(\Lambda_G)\geq 2$}

By Corollary \ref{length-condition} (ii), this implies that char$(k)=p$,
and the quadratic forms of the $p'$-Sylow groups of the discriminant groups of $\Lambda_G$ and $\ZZ v$ agree up to sign
for any prime $p'\neq p$ (as in \ref{ss:length=1}). (In particular, $l_{p'}(\Lambda_G)\leq 1$.)

\subsection*{\HM 163}
$G=U_4(3), \Lambda_G\in 4_7^{+1}3^{+2}$ leads directly to $p=3$ and $v^2=4$,
satisfying the requirement.
The symplectic action is realized geometrically on the Fermat quartic, see Section \ref{s:models}.

\subsection*{\HM 167}
$G=U_3(5)$, $\Lambda_G\in \II_{0,21}2^{+1}_7 5^{-2}$
directly leads to $p=5$ and $v^2=2$.
This is immediately verified to meet the requirement
(and thus admits a symplectic action on the supersingular K3 surface of Artin invariant $1$ 
in characteristic $p=5$)
by Theorem \ref{thm:cTorelli}
(realized in \cite[\S 6]{DKeum}, for instance).

\subsection*{\HM 169}
$G=3^4.A_6.2$, $\Lambda_G\in \II_{0,21}2^{+1}_1 3^{-1}9^{+1}$
 implies that char$(k)=3$ and $v^2=6$,
but then all order 3 elements in $w\in A(\Lambda_G)$ have square $w^2\equiv\frac 23\mod 2\ZZ$ -- just like $v/3$ -- or  $w^2\equiv 0\mod 2\ZZ$,
so in either case, they cannot glue to $v/3$, and this group is excluded.


\subsection*{\HM 201}
$G=[2^43^3], \Lambda_G\in\II_{0,21} 2_7^{+1}3^{+2}9^{-1}$,
so $p=3$ and $v^2=18$. Here it is more straight forward  
to first glue $\Lambda_G$ to $H^{(p)}$ inside $U\oplus \Lambda$. We get
\[
\II_{0,21} 2_7^{+1}3^{+2}9^{-1} \oplus \II_{0,4} 3^{+2} \rightsquigarrow
\II_{0,25}  2_7^{+1}9^{-1}
\]
which visibly does not glue to $v$ at $9$ (because $\ZZ v\in\II_{1,0} 2^{+1}_19^{-1}$).

\begin{summary}
This completes the classification of symplectic group actions with rank$(\Lambda_G)=3$ outside characteristic $2$.
\end{summary}

\subsection{Groups with $l_2(\Lambda_G)\geq 2$}
\label{ss:p=2,rk=3}

Having ruled out plenty of the groups from \cite{HM290}
such that rank$(\Lambda^G)=3$, we are left with issues in characteristic $p=2$
where $l_2(A(\Lambda_G))\geq 2$. 
Assuming that the $G$-action comes from the supersingular K3 surface $X_{0,2}$,
we consider the auxiliary lattice $L'$ which is the saturation
of $\Lambda_G\oplus H^{(2)}$ inside $L_{1,25}$:
\[
L' = (\Lambda_G\oplus H^{(2)})' \cap L_{1,25}.
\]
Since $L'$ has orthogonal complement generated by a single positive vector $v$ (as before in this section),
we infer that $\Lambda_G$ and $H^{(2)}$ glue along the full discriminant group $A(H^{(2)}) \cong v(2)$
in standard notation. 
This has immediate consequences, the first of which is valid in greater generality:

\begin{lemma}
\label{lem:odd-const}
Assume that the $2$-adic lattice $L$ has 
either  no constituent at $2$ or 
an odd constituent at $2$ of dimension $1$ or $2$,
or $2$-adic symbol $2_\II^{+2} \hdots$ without odd constituents at level $4$.
Then $v(2)\not\hookrightarrow A(L)$.
\end{lemma}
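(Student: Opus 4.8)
The plan is to reduce the question to an elementary analysis of the restriction of the discriminant quadratic form $q$ to the $2$-torsion $T:=A(L)[2]$, exploiting that $v(2)$ is the \emph{anisotropic} even form on $(\ZZ/2\ZZ)^2$: its three nonzero elements all have norm $1\in\QQ/2\ZZ$. Consequently an embedding $v(2)\hookrightarrow A(L)$ carries the same information as a two-dimensional $\FF_2$-subspace $W\subset T$ on which $q\equiv 1$ away from $0$. Indeed, if $W=\{0,x,y,x+y\}$ with $q(x)=q(y)=q(x+y)=1$, then the polarization identity $q(x+y)-q(x)-q(y)=2b(x,y)$ in $\QQ/2\ZZ$ forces $b(x,y)\equiv\tfrac12\pmod{\ZZ}$, so $W$ carries exactly $v(2)$. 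The whole proof then amounts to showing that under each of the three hypotheses no such $W$ can exist.

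First I would record how $q$ and $b$ behave on $T$ relative to the Jordan (constituent) decomposition $T=\bigoplus_{k\ge1}T_k$, where $T_k$ is the $2$-torsion coming from the level-$2^k$ constituent and the sum is orthogonal. Writing a $2$-torsion vector of the level-$2^k$ block as $2^{k-1}x$, the rescalings $q(2^{k-1}x)=2^{k-2}\,x^{\mathsf T}Bx\pmod{2\ZZ}$ and $b(2^{k-1}x,2^{k-1}y)=2^{k-2}\,x^{\mathsf T}By\pmod{\ZZ}$ (with $B$ the integral Gram matrix of the block) yield two facts I will use throughout. The pairing $b$ restricted to $T$ is supported on $T_1$: for $k\ge2$ the factor $2^{k-2}$ is an integer, so all such pairings vanish mod $\ZZ$, and distinct constituents are orthogonal. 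Moreover, the norms of $2$-torsion vectors are integral except possibly on an \emph{odd} level-$2$ block; they are forced to be $0$ on every level-$2^k$ block with $k\ge3$ and on every \emph{even} level-$4$ block; and they can equal $1$ only on an \emph{odd} level-$4$ block.

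With these in hand I would run the three cases in parallel. Fix $W=\{0,x,y,x+y\}$ as above and split $x=x_1+x'$, $y=y_1+y'$ into the $T_1$-part and the part in $\bigoplus_{k\ge2}T_k$. Since $b$ is supported on $T_1$, the requirement $b(x,y)=\tfrac12$ becomes $b(x_1,y_1)=\tfrac12$; in particular $T_1\neq0$, which already disposes of case (a). In case (b), $T_1$ is an odd block of dimension $1$ or $2$: if $\dim T_1=1$, then $b(x_1,y_1)=\tfrac12$ forces $x_1=y_1$ to be the unique nonzero element, whose norm is a half-integer, so $q(x)=q(x_1)+q(x')$ is a half-integer and cannot equal $1$; if $\dim T_1=2$, it is diagonal with generators $e_1,e_2$ of norms $a_1/2,a_2/2$ ($a_i$ odd), its only integral-norm vectors are $0$ and $e_1+e_2$, and $b$ vanishes on them, so $b(x_1,y_1)=\tfrac12$ is impossible once $q(x),q(y)\in\ZZ$. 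In case (c), $T_1$ is the hyperbolic form $u=2_{\II}^{+2}$, whose only norm-$1$ vector is the sum $e_1+e_2$ of its two generators; since there is no odd level-$4$ block, every vector of $\bigoplus_{k\ge2}T_k$ has norm $0$, so $q(x)=q(x_1)$ forces $x_1=e_1+e_2=y_1$, but then $b(x_1,y_1)=b(e_1+e_2,e_1+e_2)=0$, again a contradiction.

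Thus in every case the demand $b(x_1,y_1)=\tfrac12$ clashes with the demand that $q$ attain the value $1$, so no admissible $W$ exists and $v(2)\not\hookrightarrow A(L)$. The genuinely delicate point — where I expect to be most careful — is the bookkeeping of the norms of $2$-torsion vectors across Jordan levels, and at $p=2$ the interplay of type I/II (odd/even) blocks with the oddity and sign-walking conventions of Conway--Sloane. In particular, the hypothesis ``no odd constituent at level $4$'' in case (c) is precisely what guarantees $q(x')=0$: dropping it would let one build $W$ from $u$ together with a norm-$1$ vector of an odd level-$4$ block, yielding an embedding of $v(2)$.
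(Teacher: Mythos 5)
Your proof is correct and follows essentially the same route as the paper's: both reduce an embedding of $v(2)$ to the existence of two $2$-torsion classes of odd integral norm with half-integer pairing, and then rule this out constituent-by-constituent using that pairings of $2$-torsion classes are supported on the level-$2$ block. Your write-up just makes explicit the norm and pairing computations across Jordan levels that the paper leaves implicit.
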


\begin{proof}
For $v(2)$ to embed into $A(L)$,
we need 2 odd integral 2-torsion elements in $A(L)$ with half-integer intersection.
In the first alternatives, however, all odd integral 2-torsion elements are orthogonal in $A(L)$.
The third alternative admits essentially only one odd integral 2-torsion element;
more precisely, we distinguish the  summand $u(2)$ of $A(L)$ corresponding to the level $2$-constituent,
and its orthogonal complement $u(2)^\perp$.
By construction, any  two-torsion elements elements $u_1, u_2\in u(2)^\perp$ are even integral and orthogonal
while $u(2)$ contains a unique odd integral class $u$.
Hence any two odd integral two-torsion elements can be written as $u+u_1, u+u_2$ and continue to be orthogonal.
In each case, we thus derive a contradiction.
\end{proof}

\begin{summary}
The lemma allows us to rule out several cases in rank $3$, and also analogously in higher rank
whenever $l_2(\Lambda_G)>\rank(\Lambda^G)-2$, namely no.s
\[
48, 56, 58, 80, 91, 95, 
116, 123, 131, 148, 152, 154, 
178, 180, 193, 203, 205, 212.
\]
\end{summary}

\subsection{Witt cancellation theorem}
\label{ss:Witt}

On the other hand, if $v(2)$ embeds into $A(\Lambda_G)$,
then it will be very useful to know its complement.
In fact, the following analogy of the Witt cancellation theorem for finite quadratic forms holds in our case.
Recall that there are two building blocks of finite quadratic forms, 
$$u(2)=A(2^{+2}_{\II}), \;\;\;
v(2)=A(2^{-2}_{\II}),
$$
whose quadratic forms take integer values. 
The underlying groups are elementary abelian of order 4 for both cases and the Gram matrix is given by 
\[
u(2)=\begin{pmatrix}0&1/2\\1/2&0\end{pmatrix},\quad v(2)=\begin{pmatrix}1&1/2\\1/2&1\end{pmatrix},
\]
where the diagonal (resp. off-diagonal) entries are considered as elements in $\mathbb{Q}/2\mathbb{Z}$
(resp. $\mathbb{Q}/\mathbb{Z}$).
(For instance, they can be realized as discriminant forms of $U(2)$ or $D_8$ resp.\ $D_4$.)

\begin{lemma}
\label{lem:Witt}
Let $A$ be a nondegenerate finite quadratic form and $Q$ be a fixed direct sum of $u(2)$ and $v(2)$ 
such that there exists an embedding
$Q\hookrightarrow A$. Then, $Q^\perp\subset A$ is determined uniquely up to isometries by $A$ and $Q$,
and is independent of the chosen embedding.
\end{lemma}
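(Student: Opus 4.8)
The plan is to recast the assertion as a genuine cancellation statement and then whittle $Q$ down to a single hyperbolic block. Since $Q$ is nondegenerate, any isometric embedding $Q\hookrightarrow A$ splits off orthogonally, $A\cong Q\oplus Q^\perp$, so the lemma is equivalent to the Witt-type cancellation
$$Q\oplus B_1\cong Q\oplus B_2\ \Longrightarrow\ B_1\cong B_2.$$
First I would discard the odd part: every element of odd order is orthogonal (for both the bilinear form $b$ and the quadratic form $q$) to every $2$-torsion element, so the odd-primary component $A_{\mathrm{odd}}$ is orthogonal to $Q$ and hence a common summand of every complement. This reduces us to the case where $A$ is a $2$-group and $Q\subseteq A$. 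Next I would simplify $Q$ itself: because $u(2)\oplus u(2)$ and $v(2)\oplus v(2)$ both have genus symbol $2^{+4}_{\II}$, they are isometric, so after pairing up the $v(2)$-summands one may assume $Q\cong u(2)^{\oplus c}$ or $Q\cong u(2)^{\oplus c}\oplus v(2)$.

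It then suffices to be able to cancel a single copy of $u(2)$. Cancelling $u(2)$'s one at a time handles $Q\cong u(2)^{\oplus c}$ directly, while a leftover $v(2)$ I would dispatch by adjoining one more $v(2)$ to both sides: from $v(2)\oplus B_1\cong v(2)\oplus B_2$ one gets $v(2)^{\oplus2}\oplus B_1\cong v(2)^{\oplus2}\oplus B_2$, i.e.\ $u(2)^{\oplus2}\oplus B_1\cong u(2)^{\oplus2}\oplus B_2$, and two further $u(2)$-cancellations give $B_1\cong B_2$. For the single $u(2)$-cancellation I would exploit that $u(2)$ is \emph{hyperbolic}: it carries an isotropic pair $e_1,e_2$ with $q(e_1)=q(e_2)=0$ and $b(e_1,e_2)=\tfrac12$. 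Using this pair I would run a Witt-extension argument to show that $O(A)$ acts transitively on the $u(2)$-summands of $A$, so that any two complements are conjugate and in particular isometric. Alternatively, one can argue purely with genus symbols in the sense of \cite[Ch.\ 15]{CS99}: adjoining the even level-$2$ block $2^{+2}_{\II}$ of $u(2)$ changes only the rank $n_1$ and the sign $\varepsilon_1$ at level $2$, contributes nothing to the oddity, and lies outside every compartment, so the symbol of the complement is forced.

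The delicate point throughout is the non-uniqueness of $2$-adic Jordan decompositions (oddity fusion and sign walking, cf.\ \cite[Ch.\ 15]{CS99}), which is precisely what makes cancellation fail for general finite quadratic forms. The hard part is therefore to rule out that the two extra even blocks carried by $Q$ at level $2$ enable sign walks or oddity fusions that would identify inequivalent complements $B_1\not\cong B_2$; equivalently, that passing from the symbol of $B$ to that of $Q\oplus B$ is injective on isometry classes. In the symbol approach this amounts to checking that every sign walk and every oddity fusion of $Q\oplus B$ affecting the $B$-part descends to one of $B$ alone, which I would verify by tracking the train and compartment containing level $2$; in the Witt-extension approach the same subtlety resurfaces as the availability of the extension theorem, where the isotropic pair inside $u(2)$ is exactly what guarantees it. Once a single $u(2)$ is shown to be cancellable, the reductions above close the argument.
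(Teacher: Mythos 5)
Your reduction is appealing and correct as far as it goes: recasting the lemma as the cancellation statement $Q\oplus B_1\cong Q\oplus B_2\Rightarrow B_1\cong B_2$, splitting off the odd-primary part, using $u(2)^{\oplus 2}\cong v(2)^{\oplus 2}$ to normalize $Q$ to $u(2)^{\oplus c}$ or $u(2)^{\oplus c}\oplus v(2)$, and absorbing a leftover $v(2)$ by adjoining a second copy are all valid moves, and they reduce everything to cancelling a single $u(2)$. The problem is that this single cancellation \emph{is} the lemma; both routes you offer for it stop exactly where the difficulty begins. The Witt-extension route asserts that $O(A)$ acts transitively on hyperbolic pairs (equivalently on $u(2)$-summands). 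For finite quadratic forms this is not available off the shelf: orbits of $2$-torsion elements are not determined by their $q$-values alone (divisibility inside $A$ intervenes, e.g.\ an isotropic element may or may not lie in $2A$), and the example given in the paper right after the proof shows that the analogous transitivity \emph{fails} for the split integer-valued form $\langle 1/2\rangle\oplus\langle 3/2\rangle$ inside a group with $4$-torsion. So a Witt extension theorem adapted to the isotropic pair of $u(2)$ would itself have to be proved, and that is essentially the statement you set out to prove.

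The symbol route correctly isolates what must be shown --- that adjoining the even block $2^{\pm 2}_{\II}$ is injective on isometry classes of complements --- but ``tracking the train and compartment containing level $2$'' is precisely where the Conway--Sloane formalism is unreliable: the paper deliberately replaces trains by the signways of Allcock--Gal--Mark \cite{AGM} because the canonical symbol of \cite[\S 15.7.4]{CS99} does not correctly control sign walking. Two concrete phenomena then have to be handled which your sketch does not touch: (i) when the $2$-torsion of $A$ is not integer-valued, Nikulin's strong approximation attaches \emph{two} minimal-rank $2$-adic lattices to $A$ and to the complement, and one must check that both representatives return the same discriminant form; (ii) when the complement has $1$-dimensional odd constituents at scales $2$ and $4$ forming a compartment of total oddity $0$ or $4$, the adjoined even block extends a signway and the sign bookkeeping genuinely changes (the cases \eqref{witt-2b}--\eqref{witt-5} of the paper's proof). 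The paper's argument is a complete case analysis of exactly these situations after lifting to $2$-adic lattices; your proposal names the obstacle accurately but does not overcome it, so as it stands there is a gap at the decisive step.
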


\begin{proof}
First we recall the theorems which we utilize. One is  
Nikulin's strong approximation theorem \cite[Theorem 1.9.1]{Nikulin80}, which states that for a given 
finite quadratic form $A$ on a $p$-group, there exists a unique $p$-adic lattice $L$ such that 
$A(L)\simeq A$ and $\mathrm{rank}(L)=\mathrm{length}(A)$, 
except when discriminant form restricted to the 2-torsion subgroup of $A$ is not integer-valued. 
In the latter case, the theorem moreover states that there are exactly two $p$-adic lattices with these properties.
The second input is the definition and existence of (refined) canonical symbols for $2$-adic lattices by \cite{AGM}. 
It corrects the one given in \cite[\S 15.7.4]{CS99} as follows. Instead of trains in \cite[\S 15.7.5]{CS99}, the authors of \cite{AGM} introduce {\em{signways}} to precisely describe the interactions of signs at various scales under sign-walkings. Signways subdivide trains at certain compartments of dimension 2, see \cite[Section 6]{AGM}. Thus, the (refined) canonical symbol is defined by the property that it has at most one minus sign in each signway, and it is at the leftmost scale of the way if exists. 
Then two 2-adic lattices are isometric if and only if they have the same (refined) canonical symbols.

Now, let $A,Q$ be given as in the statement. By the Jordan decomposition of general finite forms into the orthogonal sum of $p$-Sylow subgroups, it is enough to show the lemma at $p=2$. So, we may assume that $A$ is a 2-group.
Since $Q$ is nondegenerate, $A=Q\oplus Q^{\perp}$ holds as finite quadratic forms. By strong approximation, 
there are 2-adic lattices $L,M,N$ such that
$A=A(L),\ Q=A(M),\ Q^{\perp}=A(N)$
and whose respective ranks are equal to the lengths of $A,Q,Q^{\perp}$. By assumption, we have $M=2^{em}_{\II}$ for some $e\in \{\pm 1\},\ m\in 2\mathbb{Z}_{>0}$ uniquely. We have to separate the cases in some detail.

\subsubsection{}
If the $2$-torsion subgroup of $A$ is integer-valued, and hence so is $Q^{\perp}$, then the 2-adic 
lattices $L$ and $N$ are determined uniquely, too. We consider the canonical 2-adic symbol of $N$. 

\subsubsection{}
If it begins with one of the following expressions,
\begin{equation}\label{witt-0} q_X^{\varepsilon n}\cdots,\quad 4_{\II}^{\varepsilon n}\cdots,\quad 2_{\II}^{\varepsilon n}\cdots,\end{equation}
where $q\geq 8,\ X\in \mathbb{Z}/8\mathbb{Z}\cup \{\II\}, \varepsilon\in\{\pm 1\}$ and $n>0$, 
then the canonical symbol of $M\oplus N$ will be
\begin{equation}\label{witt-1} 
2^{em}_{\II}q_X^{\varepsilon n}\cdots,\quad 2_{\II}^{em}4_{\II}^{\varepsilon n}\cdots,\quad 2_{\II}^{(e\varepsilon) (m+n)}\cdots
\end{equation}
respectively. These are the cases where the signway of $N$ {\em{does not extend}} in $M\oplus N$, in the sense that 
for each signway of $N$, the set of its scales constitute an (entire) signway in $M\oplus N$ again.
In fact, in the former two cases of $(\ref{witt-1})$ the constituent $2^{em}_{\II}$ is itself a signway and the remaining symbols and its signways are the same as that of $N$. In the last case, the signways of $N$ and 
$M\oplus N$ correspond with the same set of scales.
These observations ensure that the omitted symbols in $\cdots$ are completely the same in $(\ref{witt-0})$ and $(\ref{witt-1})$.

\subsubsection{}
On the other hand, if the canonical symbol of $N$ begins with an odd constituent of scale 4, i.e.\ $N=[4^{\varepsilon n}\cdots$ (where $[$ denotes the beginning of a compartment), then the canonical symbol of $M\oplus N$ will be either 
\begin{equation}\label{witt-2}
2^{em}_{\II}[4^{+ n}\cdots \text{\quad or\quad }2^{(-e)m}_{\II}[4^{+ n}\cdots
\end{equation}
according as $\varepsilon=+1$ or $-1$. This is the case where the signway of $N$ {\em{extends}} in 
$M\oplus N$, as the new constituent of scale 2 becomes a part of a signway which extends the signway of $N$ that contains the least scale 4. We note that even in this case, the omitted symbols in $\cdots$ are 
unchanged from those of $N$.
Comparing the possible canonical symbols (\ref{witt-1}) and (\ref{witt-2}) case-by-case, we see that the isomorphism class of $N$ is uniquely determined by that of $M\oplus N$. In turn, the 2-adic lattice $M\oplus N\simeq L$ is uniquely determined from $A$ as we noticed before. Hence $Q^{\perp}=A(N)$ is also uniquely determined.

\subsubsection{}
Next we consider the case where the $2$-torsion subgroup of $A$ is not integer-valued, and so isn't $Q^{\perp}$. 
In this case, the strong approximation theorem states that there correspond two 2-adic lattices to each 
$A$ and $Q^{\perp}$. We choose one representative $N$ for $Q^{\perp}$.
Since $Q$ is even, 
we know that the canonical symbol of $N$ begins with odd constituent of scale 2 at which its first compartment begins, so it is
\begin{equation}\label{witt-2a} 
[2^{\varepsilon n}\ddda ]_t\dddb\ \ (t\in \mathbb{Z}/8\mathbb{Z},\varepsilon\in\{\pm 1\}, n>0),
\end{equation}
where $\ddda$ and $\dddb$ are appropriate symbols, possibly empty.
Again, we separate the cases by distinguishing whether signways of $N$ extend in $M\oplus N$ or not. 
A careful inspection of cases (see \cite[Lemma 6.1]{AGM}) shows that the signway of $N$ extends in $M\oplus N$ if and only if 
\begin{equation}\label{witt-2b}
\begin{split}
&\text{$N$ has 1-dimensional constituents at scales 2 and 4 and they }\vspace{-2mm}\\
&\text{ constitute a 2-dimensional compartment of total oddity $0$ or $4$}.
\end{split}
\end{equation}
If this is not the case, the canonical symbol of $M\oplus N$ is given simply by
\begin{equation}\label{witt-3}
[2^{(e\varepsilon)(m+n)}\ddda ]_t\dddb.
\end{equation}
If $(\ref{witt-2b})$ holds true, the canonical symbol of $N$ is one of 
\[ [2^{\varepsilon 1}4^{\varepsilon 1}]_0\dddb,\quad [2^{\varepsilon 1}4^{(-\varepsilon) 1}]_4\dddb,\]
with $\varepsilon=\pm 1$. The canonical symbol of $M\oplus N$ then becomes 
\begin{equation}\label{witt-4}
[2^{e(1+m)}4^{+1}]_{2-2\varepsilon}\dddb ,\quad [2^{(-e)(1+m)}4^{+1}]_{2-2\varepsilon}\dddb
\end{equation}
respectively. Now, our lemma follows if the two possible representatives $L,L'$ of $A$ both lead to (possibly several $N$ but) the same discriminant group $A(N)$. 

\subsubsection{}
Suppose that 
$L$ has the symbol (\ref{witt-3}) (and that \eqref{witt-2b} does not hold true). Then, the other representative $L'$ is given by
\[[2^{(-e\varepsilon)(m+n)}\ddda ]_{t+4}\dddb\]
by the strong approximation theorem. (Note that this is just (\ref{witt-3}) applied with ``fake'' sign-walking at scales 1 and 2. The same holds in general, see \cite{Nikulin80}). 
By inspection of the possible symbols (\ref{witt-3}) and (\ref{witt-4}), 
we can recover the  lattices $N, N'$ as follows,
omitting illegal symbols right away:

\begin{table}[ht!]
$$
\begin{array}{c|c}
(\varepsilon, n ,\ddda, t) & \text{possible } N,N' \\
\hline
(1,1,4^{+1},0) & [2^{+1}4^{+1}]_0\dddb,\quad [2^{-1}4^{+1}]_4\dddb \\
(1,1,4^{+1},4) & [2^{-1}4^{-1}]_0\dddb,\quad [2^{+1}4^{-1}]_4\dddb\\
(-1,1,4^{+1},0) & [2^{+1}4^{-1}]_4\dddb,\quad [2^{-1}4^{-1}]_0\dddb\\
(-1,1,4^{+1},4) & [2^{-1}4^{+1}]_4\dddb,\quad [2^{+1}4^{+1}]_0\dddb\\
\text{(otherwise)} & [2^{\varepsilon n}\ddda]_{t}\dddb,\quad [2^{-\varepsilon n}\ddda]_{t+4}\dddb\\
\end{array}
$$
\end{table}

In each case, the strong approximation theorem implies again that the two possible entries for $N, N'$
belong to the same discriminant form $A(N) \cong A(N')$.
Thus the lemma follows in this case.

\subsubsection{}
It remains to consider the case where $L$ has the symbol $(\ref{witt-4})$ (and where \eqref{witt-2b} holds true as well). 
Then the corresponding $L'$ has the symbol
\begin{equation}\label{witt-5}
[2^{(-e)(1+m)}4^{+1}]_{6-2\varepsilon}\dddb ,\quad [2^{e(1+m)}4^{+1}]_{6-2\varepsilon}\dddb
\end{equation}
respectively. Again we go through the possibilities to check the next table.

\begin{table}[ht!]
$$
\begin{array}{c|c|c}
\varepsilon & L,L' \text{ (omitted $\dddb$)} & \text{possible } N,N' \text{ (omitted $\dddb$)}\\
\hline
1 & [2^{e(1+m)}4^{+1}]_0,\quad [2^{(-e)(1+m)}4^{+1}]_4 & [2^{+1}4^{+1}]_0,\quad [2^{-1}4^{+1}]_4\\
-1& [2^{e(1+m)}4^{+1}]_4,\quad [2^{(-e)(1+m)}4^{+1}]_0 & [2^{-1}4^{-1}]_0,\quad [2^{+1}4^{-1}]_4\\
1 & [2^{(-e)(1+m)}4^{+1}]_0,\quad [2^{e(1+m)}4^{+1}]_4 & [2^{+1}4^{-1}]_4,\quad [2^{-1}4^{-1}]_0\\
-1& [2^{(-e)(1+m)}4^{+1}]_4,\quad [2^{e(1+m)}4^{+1}]_0 & [2^{-1}4^{+1}]_4,\quad [2^{+1}4^{+1}]_0\\
\end{array}
$$
\end{table}

In every case, we see as before that $A(N)\cong A(N')$ is uniquely determined as a finite quadratic form.
This completes the proof of Lemma \ref{lem:Witt}.
\end{proof}

\subsection{Remark}
The Witt cancellation theorem has been known to hold in several other situations since the classical theory of 
quadratic forms over fields. We want here to exhibit an example in which the cancellation {\em{fails}}
for general embeddings of finite quadratic forms. 
Thus, the assumption on the quadratic form of $Q$ attaining integer values is really necessary.

\begin{example}
Let $A$ be the finite quadratic form defined by the following underlying group and the Gram matrix,
\[A=\left((\ZZ/2\ZZ)^2\oplus (\ZZ/4\ZZ)^2,\ 
\begin{pmatrix}1/2&\\&3/2\end{pmatrix}\oplus
\begin{pmatrix}3/4&\\&3/4\end{pmatrix}
\right).\]
We consider the embedding of the following form 
\[Q=\left( (\ZZ/2\ZZ)^2,\ \begin{pmatrix}1/2&\\&3/2\end{pmatrix}\right)\]
into $A$. We denote by $e_1,e_2,f_1,f_2$ the basis of $A$ corresponding to the above matrices, i.e.,
$e_i$ has order 2 $(i=1,2)$ and $f_i$ has order 4 $(i=1,2)$,
$(e_1)^2=1/2,\ (e_2)^2=3/2,\ (f_i)^2=3/4\ (i=1,2)$ and every two distinct elements are orthogonal.
We then find two subspaces isomorphic to $Q$ by
\begin{equation*}
Q_1=\lrbracket{e_1,\ e_2},\quad Q_2=\lrbracket{e_1+2f_1,\ e_2+2f_2}.
\end{equation*}
We compute their orthogonal complements as follows.
\[\begin{split}
Q_1^{\perp}=\lrbracket{f_1,f_2}\simeq \left( (\ZZ/4\ZZ)^2,\ \begin{pmatrix}3/4&\\&3/4\end{pmatrix}\right),\\
Q_2^{\perp}=\lrbracket{e_1+f_1,e_2+f_2}\simeq \left( (\ZZ/4\ZZ)^2,\ \begin{pmatrix}5/4&\\&1/4\end{pmatrix}\right).
\end{split}\]
It is easy to see that the quadratic form of $Q_1^{\perp}$ does not attain the value $1/4\in \QQ/2\ZZ$. 
Therefore, the embeddings $Q_1$ and $Q_2$ do
not have isometric complements, contrary to the situation of Lemma \ref{lem:Witt}.
\end{example}

\subsection{Conclusion of the classification in rank $3$}
With the help of Lemma \ref{lem:Witt} ,
we can easily continue by computing $v(2)^\perp$ for the five remaining cases in rank 3
and check whether $L'$ glues to $v$ in Table \ref{table2}.

\begin{table}[ht!]
$$ 
\begin{array}{c|c|c|c|c|c}
\text{no.} & G & \text{genus symbol of } \Lambda_G & \text{genus symbol of } L' & v & \text{glues?} \\
\hline
162 & 2^9.M_{21} & 2_\II^{-2}8_3^{-1} & 8_3^{-1} & 8 & no\\
171 & M_{21}.2_1 & 2_5^{+3}7^{-1}
& 2_5^{-1} 7^{-1} & 14  & ok \\
179 & 2^4:\mathfrak S_6 & 2_\II^{-2} 8_1^{+1} 3^{+1} & 8_1^{+1} 3^{+1} & 24 & no\\
188 & 2^5.\mathfrak S_5 & 2_\II^{-2} 8_7^{+1} 5^{-1} & 8_7^{+1} 5^{-1} & 40 & no\\
194 & \Aut(\mathfrak S_6) & 2_5^{+3} 3^{-1} 5^{+1} & 2_5^{-1} 3^{-1} 5^{+1} & 30 & ok
\end{array}
$$
\caption{Groups $G$ with $v(2)\hookrightarrow A(\Lambda_G) \; (p=2)$}
\label{table2}
\end{table}

\smallskip

As an explanation for no.s 171 and 194,
we denote orthogonal half-integral generators of $A(\Lambda_G)_2$ by $w_1, w_2, w_3$, each of square $7/2$.
Then $v(2)\subset A(\Lambda_G)$ comprises the sums $w_i+w_j$, and the two-torsion in 
$v(2)^\perp$ is generated by $w_1+w_2+w_3$ of square $5/2$ as stated (uniquely by Lemma \ref{lem:Witt}).

We conclude by pointing out that $G=M_{21}.2_1$ was realized in \cite{DK}.
Meanwhile we shall see in Section \ref{s:Aut(S_6)} 
that $ \Aut(\mathfrak S_6)$ can also be realized as a group of symplectic automorphisms on $X_{0,2}$
(preserving a polarization of degree $30$ indeed).

\section{Classification in rank 4 -- tame case}
\label{s:rk4=tame}

We turn to the tame case in rank 4 and start with 
an easy observation which surely is well-known.

\begin{lemma}
\label{lem:aux}
Let $p$ be prime and $G$ a finite group such that $p\nmid |G|$.
If $G$ acts on some $p$-unimodular lattice $L$,
then $p\nmid\det(L^G)\det(L_G)$.
\end{lemma}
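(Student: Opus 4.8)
The plan is to localize at $p$ and exploit Maschke's theorem in the form of an averaging idempotent. Set $L_p = L\otimes_\ZZ\ZZ_p$; since $p\nmid\det(L)$, this is a unimodular $\ZZ_p$-lattice. Because $p\nmid|G|$, the element $e = \frac{1}{|G|}\sum_{g\in G} g$ is a well-defined $\ZZ_p$-linear endomorphism of $L_p$, and it is an idempotent projecting onto the invariants: one checks $ge = e$ for all $g\in G$ and $e|_{(L_p)^G} = \mathrm{id}$, so $e(L_p) = (L_p)^G$.

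The key point is that $e$ is self-adjoint for the bilinear form. Indeed, each $g\in G$ acts by isometries, so its adjoint is $g^{-1}$; averaging and reindexing gives $e^* = e$. Consequently $\mathrm{im}(e)$ and $\ker(e) = \mathrm{im}(1-e)$ are orthogonal: for $ex = x$ and $ey = 0$ we have $\langle x, y\rangle = \langle ex, y\rangle = \langle x, ey\rangle = 0$. Thus $L_p$ decomposes as the orthogonal direct sum $L_p = e(L_p) \perp (1-e)(L_p)$ of two free $\ZZ_p$-sublattices, each a direct summand.

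It then remains to identify the two summands. Since $\ZZ_p$ is flat over $\ZZ$, taking $G$-invariants commutes with the base change, so $e(L_p) = (L_p)^G = L^G\otimes\ZZ_p$; likewise, forming the orthogonal complement commutes with $\otimes\ZZ_p$ because $L^G$ is a kernel and hence saturated in $L$, giving $(1-e)(L_p) = (L^G\otimes\ZZ_p)^\perp = L_G\otimes\ZZ_p$. Finally, in an orthogonal decomposition of a unimodular $\ZZ_p$-lattice into sublattices that are direct summands, the Gram matrix is block-diagonal with respect to a concatenated basis, so $\det(L_p) = \det(L^G\otimes\ZZ_p)\cdot\det(L_G\otimes\ZZ_p)$ is a unit; hence both factors are units in $\ZZ_p$, which is exactly $p\nmid\det(L^G)$ and $p\nmid\det(L_G)$.

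The arguments are almost entirely formal; I expect the only points requiring genuine (if routine) care to be the two flat-base-change identifications, namely that $(L\otimes\ZZ_p)^G = L^G\otimes\ZZ_p$ and that the orthogonal complement of $L^G\otimes\ZZ_p$ in $L_p$ equals $L_G\otimes\ZZ_p$. For both, one realizes invariants and orthogonal complements as kernels of explicit $\ZZ$-linear maps and uses exactness of $\otimes_\ZZ\ZZ_p$. The subtlety is that orthogonal complement does not commute with base change in general, but it does here since $L^G$ is primitive (being the kernel of $x\mapsto(gx-x)_{g\in G}$), and that is the only place where a little vigilance is needed.
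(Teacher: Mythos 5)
Your proof is correct, and it takes a genuinely different (though related) route from the paper's. The paper argues by contradiction through the gluing picture: if $p$ divided $\det(L^G)$, then $L^G$ and $L_G$, being primitive orthogonal complements in the $p$-unimodular $L$, would have to glue along a nontrivial $p$-group, producing a vector $w=\tfrac1p(u+v)\in L$ with $u\in L^G$ primitive and $v\in L_G$; the orbit sum $\sum_{g}g(w)=\tfrac{|G|}{p}\,u$ then lies in $L^G$ and contradicts the primitivity of $u$ unless $p\mid |G|$. You instead pass to $L\otimes\ZZ_p$ and use the Maschke idempotent $e=\tfrac1{|G|}\sum_g g$, whose self-adjointness yields the orthogonal splitting $L\otimes\ZZ_p=(L^G\otimes\ZZ_p)\perp(L_G\otimes\ZZ_p)$ of a unimodular $\ZZ_p$-lattice, forcing both determinants to be $p$-adic units. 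The underlying engine is the same averaging over $G$ made possible by $p\nmid|G|$, but your packaging proves the stronger, reusable statement that the coinvariant lattice is a $p$-adic orthogonal direct summand (which is essentially the fact the paper later cites from the literature in the proof of Proposition~\ref{prop:s_vs_s}), whereas the paper's version is shorter and stays entirely integral. One small remark: your worry about saturation in the base-change step for orthogonal complements is unnecessary, since $M^\perp=\ker\bigl(L\to\mathrm{Hom}(M,\ZZ)\bigr)$ commutes with flat base change for any finitely generated $M$; but the extra caution does no harm.
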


\begin{proof}
Since $L^G$ and $L_G$ embed primitively as orthogonal complements into $L$,
$p$ dividing one's determinant is equivalent to $p$ dividing both.
But then $L^G$ and $L_G$ glue along a subgroup of size $p$ inside their discriminant groups,
i.e.\ there are primitive vectors $u\in L^G, v \in L_G$ such that
\[
w = \frac 1p (u+v) \in L.
\]
But then the orbit under $G$ gives
\[
L^G \in \sum_{g\in G} g(w) = \frac{|G|}p u, \;\;\; \text{ since } \sum_{g\in G} g(v) \in L_G\cap L^G = \{0\}.
\]
As $u$ was primitive in $L^G$, we infer that $p\mid |G|$ as claimed.
\end{proof}

We use the lemma for the following abstract characterization
of tame group actions in rank 4 (valid for any Artin invariant, in fact):

\begin{proposition}
\label{prop:tame}
Let $G$ be a finite group acting on $\Lambda$ such that $\Lambda^G$ has rank 4.
Then the $G$-action 
 satisfies the lattice-theoretic requirements for a tame symplectic action on some supersingular K3 surface $X$ in characteristic $p$
with $\rank \NS(X)^G=2$  from Theorem \ref{thm:cTorelli}
if and only if  the following conditions hold
\begin{enumerate}
\item[(i)]
$p\nmid |G|$;
\item[(ii)]
$A(\Lambda^G)$ has length at most two;
\item[(iii)]
$\sigma_0(X)=1$;
\item[(iv)]
$\det \Lambda^G$ is a non-square mod $p$, i.e.\  $\left(\frac{\det \Lambda^G}{p}\right)=-1$.
\item[(v)]
at every prime $p'\mid\det\Lambda^G$ such that 
$l_{p'}(A(\Lambda^G))=2$, 
write $d'$ for the prime to $p'$-part of $\det\Lambda^G$.
Then  
the signs $\varepsilon_q$ in the $p'$-adic symbol of $\Lambda_G$ 
satisfy
\begin{eqnarray}
\label{eq:det_p'}
\prod_{q'>1} \varepsilon_{q'} = \left(\frac{-d'}{p'}\right).
\end{eqnarray}
%
\end{enumerate}

\end{proposition}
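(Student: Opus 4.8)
The plan is to recast the ``lattice-theoretic requirements'' of Theorem~\ref{thm:cTorelli} as the existence of a primitive embedding $\Lambda_G\hookrightarrow\NS(X)$ onto the coinvariant lattice $\NS(X)_G$, where $\NS(X)$ is the $p$-elementary lattice of a supersingular K3 surface in characteristic $p$, and then to read off (i)--(v) from the arithmetic of the rank $2$ orthogonal complement $N^G:=\NS(X)^G$ of signature $(1,1)$. Two hypotheses of Theorem~\ref{thm:cTorelli} come for free: $\Lambda_G$ contains no $(-2)$-vectors because $\Lambda$ has minimal norm $(-4)$, and $G$ acts trivially on $A(\Lambda_G)$ by construction (cf.\ the proof of Theorem~\ref{thm:Leech}), so the gluing of $\Lambda_G$ to $N^G$ is $G$-equivariant and the induced action on $A(\NS(X))$ is trivial. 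Thus everything reduces to the existence of the rank $2$ glue partner $N^G$ realizing the complementary discriminant form, and the task is to show that this existence is governed exactly by (i)--(v).

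For necessity I would start from (i), which is tameness by definition and is precisely what makes Lemma~\ref{lem:aux} available: as $\Lambda$ is unimodular and $p\nmid|G|$, both $\Lambda^G$ and $\Lambda_G$ are $p$-unimodular, so $A(\Lambda^G)\cong A(\Lambda_G)$ carries no $p$-torsion. Hence $\Lambda_G$ contributes nothing at $p$ to the gluing, so the $p$-part of $A(\NS(X))$ equals that of $A(N^G)$; since $\rank N^G=2$ this forces $2\sigma_0=l_p(A(\NS(X)))\le 2$, that is $\sigma_0=1$, which is (iii). At any prime $q\neq p$ the lattice $\NS(X)$ is $q$-unimodular, so $\Lambda_G$ and $N^G$ glue along their entire $q$-parts and $A(\Lambda^G)_q\cong -A(N^G)_q$ has length at most $\rank N^G=2$; taking the maximum over $q$ (there is no $p$-part) gives (ii). Both of these are the rank $2$ incarnation of Corollary~\ref{length-condition}.

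To obtain (iv) and (v) I would examine the determinant of $N^G$ prime by prime. Because $\sigma_0=1$ and $\Lambda_G$ is $p$-unimodular, the whole $p$-part of $\NS(X)\cong\NS(X_{0,p})$, namely the constituent $p^{\varepsilon 2}$ with $\varepsilon=-\left(\frac{-1}{p}\right)$, is contributed by $N^G$; since $\det N^G=-p^{2}\det\Lambda^G$ (using signature $(1,1)$ and $|\det\Lambda^G|=|\det\Lambda_G|$), matching the sign $\varepsilon$ against the prime-to-$p$ unit $-\det\Lambda^G$ collapses to $\left(\frac{\det\Lambda^G}{p}\right)=-1$, which is (iv). At each prime $p'\mid\det\Lambda^G$ with $l_{p'}(A(\Lambda^G))=2$ we have $l_{p'}(\Lambda_G^\perp)=\rank\Lambda^G-2=2$, so Lemma~\ref{lem:det} applies to the embedding $\Lambda_G\hookrightarrow\NS(X_{0,p})$ and yields $\prod_{q'>1}\varepsilon_{q'}(\Lambda_G^\perp)=\left(\frac{d'}{p'}\right)$. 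Rewriting the signs on $\Lambda_G^\perp$ through Lemma~\ref{-1} introduces only the factor $\left(\frac{-1}{p'}\right)^{l_{p'}}=1$, and replacing $-\det\Lambda_G$ by $\det\Lambda^G$ turns the determinant condition into the stated form $\prod_{q'>1}\varepsilon_{q'}=\left(\frac{-d'}{p'}\right)$, which is (v).

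For the converse I would run this backwards: assuming (i)--(v), prescribe on $A(N^G)$ the finite quadratic form whose prime-to-$p$ part is $-A(\Lambda^G)$ and whose $p$-part is the rank $2$ form of $\NS(X_{0,p})$. The key point is that (ii), (iv) and (v) are exactly the local solvability conditions of Nikulin's existence theorem \cite[Thm.\ 1.10.1, Cor.\ 1.9.4]{Nikulin80} for an even lattice of signature $(1,1)$ carrying this form; gluing $\Lambda_G\oplus N^G$ along the graph of the sign-reversing isometry then produces an even $p$-elementary lattice in the genus $\II_{1,21}\,p^{\varepsilon 2}$, which is $\NS(X_{0,p})$ by uniqueness in its genus, and Theorem~\ref{thm:cTorelli} realizes the action geometrically when $p>3$. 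The part I expect to be most delicate is precisely this reduction of Nikulin's abstract local conditions to the clean statements (iv) and (v): one must track every sign through Lemma~\ref{-1}, confirm that the signature-mod-$8$ constraint is automatically satisfied for a rank $2$ complement, and deal with oddity fusion and sign-walking at the prime $p'=2$ (in the spirit of Lemma~\ref{lem:Witt}).
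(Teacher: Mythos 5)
Your proposal follows essentially the same route as the paper's proof: necessity via Lemma \ref{lem:aux} ($p$-unimodularity of $\Lambda_G$), the rank-two length bound giving (ii) and (iii), and the determinant comparison $\det N^G=-p^2\det\Lambda^G$ against the constituent $p^{\varepsilon 2}$ of $\NS(X_{0,p})$ for (iv) and Lemma \ref{lem:det} for (v), with sufficiency obtained by constructing the rank-two glue partner of prescribed genus and invoking the existence theorem. The single step you flag but do not carry out --- the signature/$p$-excess/oddity congruence modulo $8$ in the converse --- is exactly what the paper verifies by a short computation: passing from $\Lambda_G^-$ to $M'$ drops the signature by $20\equiv 4\pmod 8$ while the added constituent $p^{\varepsilon_p 2}$ contributes $2(p-1)+2(1-\varepsilon_p)\equiv 4\pmod 8$ to the $p$-excess for every odd $p$, so the two corrections cancel and the required even lattice $M'$ exists.
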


\begin{proof}
Let $G$  satisfy the lattice-theoretic requirements for a tame symplectic action on some supersingular K3 surface $X$ in characteristic $p$
with $\rank \NS(X)^G=2$ and $\Lambda_G=\NS(X)_G$.
Then Condition (i) is obvious,
and  $p\nmid\det \Lambda^G$ by Lemma \ref{lem:aux}.
By assumption, $\Lambda_G\cong \NS(X)_G$ glues to $M':=\NS(X)^G$,
a hyperbolic even lattice of rank two,
so
\[
l_{p'}(A(\Lambda^G)) = l_{p'}(A(\Lambda_G)) \leq \rank M' = 2
\]
holds trivially at all primes $p'\neq p$
while the $p$-length  of $A(\Lambda^G)$ is zero as we have seen. Hence (ii) follows 
 by the Chinese Remainder Theorem,
and we also infer that 
\[
l_p(\NS(X))=l_p(M')\leq \rank(M')=2.
\]
This implies (iii).

We conclude that the $p$-Sylow subgroup $A(M')_p$
is anti-isometric to $A(H^{(p)})$. 
Since the rank is even, this amounts to the $p$-adic symbol of $M'$ 
agreeing with the non-unimodular factor of the $p$-adic symbol of $H^{(p)}$, i.e. 
\begin{eqnarray}
\label{eq:pe2}
p^{\varepsilon_p 2}, \;\;\; \varepsilon_p =  - \left(\frac{-1}p\right).
\end{eqnarray}
But then by construction, we have $\det(M') = - p^2\det\Lambda^G$,
so the standard determinant condition as in \ref{ss:det} 
implies
\[
\varepsilon_p =   \left(\frac{-\det \Lambda^G}{p}\right).
\]
Comparing with \eqref{eq:pe2} we infer condition (iv).

Condition (v) follows in a similar way.
In fact, this is just Lemma \ref{lem:det}
because the  sign products 
$\prod_{e\geq 1} \varepsilon_{e}$ agree for $ \Lambda^G, \Lambda_G, \Lambda_G^-$ and $M'$ by Lemma \ref{-1}.

Conversely, given $G$ with $\Lambda^G$ satisfying conditions (i), (ii), (iv) and (v),
we have to make sure that $\Lambda_G$ glues to some rank two hyperbolic even lattice $M'$ 
to give $\NS(X_{0,p})$,
and check the conditions of the crystalline Torelli theorem.
By what we have seen above, the genus symbol of $M'$ is predicted as that of $\Lambda_G^-$ augmented
by a factor $p^{\varepsilon_p 2}$. 
The existence of $M'$ as an even integral lattice
can be inferred from \cite[Thm.\ 1.10.1.]{Nikulin80},
but in line with our previous arguments, we will content ourselves with
checking the conditions from \cite[15.7.7]{CS99}.
Here the compatibilities of determinants are  encoded in conditions (iii) and (iv) by the same argument as above;
for completeness we comment on the case of primes $p'\neq p$ such that $l_{p'}(A(\Lambda^G))=1$.
Here the $p'$-adic symbol contains a factor $q^{\varepsilon_e1}$ (derived from $\Lambda_G$ by Lemma \ref{-1}),
so we are free to set the other factor to be $1^{\varepsilon_01}$ where
\[
\varepsilon_0 = \varepsilon_e \left(\frac{-d'}{p'}\right)
\]
accommodating the $p'$-prime part $-d'$ of the determinant of $M'$.

We conclude by claiming that everything else from \cite[15.7.7]{CS99} needed to infer the existence of $M'$
is inherited from $\Lambda_G^-$ (which surely exists).
Indeed, the only non-trivial condition is the congruence relation of signature, $p$-excess and oddity modulo $8$:
\begin{eqnarray}
\label{eq:CS-cond}
\text{signature } + \sum_{p''>2} p''\text{-excess } \equiv \text{ oddity } \mod 8.
\end{eqnarray}
When moving from $\Lambda_G^-$ to $M'$, the signature drops from $20$ to $0$,
but other than that only the $p$-excess changes from $0$ (since $p\nmid\det\Lambda^G$)
to $2(p-1) + 2(1-\varepsilon_p)$.
But the last expression is congruent to $4$ modulo $8$ for any odd $p$,
so the two correction terms balance out modulo $8$ and \eqref{eq:CS-cond} remains valid for $M'$.
Thus the required integral lattice $M'$ exists, and 
it 
is automatically even;
by construction
  $\Lambda_G\oplus M'$ has $\NS(X_{0,p})$ as a finite index overlattice admitting the required $G$-action.
%
%
%
\end{proof}

\begin{remark}
\label{rem:tame-wild}
By inspection of the proof, the arguments only require that $p\nmid\det\Lambda^G$
(as inferred in the tame case from Lemma \ref{lem:aux}),
so they also apply to certain wild cases (see \ref{ss:square}, for instance). 
%
\end{remark}

We shall now apply  Proposition \ref{prop:tame} to the classification in \cite{HM290}.
In principle we can just go through the single cases,
but our main goal is to infer an  abstract group theoretic characterization in the spirit of \cite{Mukai}.
This also provides an independent proof of the classification in \cite{DKeum} of possible finite groups $G$
 acting tamely symplectically  on $X_{0,p}$ and more generally on any supersingular K3 surface in characteristic $p$,
but now supplemented by the existence decision  which makes for the second main novelty of Theorem \ref{thm}.

\begin{theorem}
\label{thm:tame}
A finite group $G$ admits a tame action on some supersingular K3 surface $X$ in characteristic $p$
with $\NS(X)^G$ of rank two if and only if
$p\nmid |G|, ~X\cong X_{0,p}$ and
$G$ can be realized as a subgroup of $M_{23}$ acting with 4 orbits on $\{1,\hdots,24\}$
such that the orbit lengths $l_1,\hdots,l_4$ satisfy
\begin{eqnarray}
\label{eq:Legendre-tame}
\left(\frac{l_1\cdots l_4}p\right) = -1.
\end{eqnarray}
\end{theorem}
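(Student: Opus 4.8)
The plan is to translate Theorem~\ref{thm:tame} into the lattice-theoretic conditions of Proposition~\ref{prop:tame} and then match those conditions against the combinatorial data of a $4$-orbit action of $G\subset M_{23}$. First I would recall that by Theorem~\ref{cor:rk<=3} a finite symplectic $G$-action on a supersingular K3 surface produces a faithful action on the Leech lattice $\Lambda$ with $\rank\Lambda^G\geq 3$, and the requirement $\rank\NS(X)^G=2$ forces $\rank\Lambda^G=4$. So the geometric existence problem (via the crystalline Torelli theorem, Theorem~\ref{thm:cTorelli}) is equivalent to finding a $G$-action on $\Lambda$ satisfying conditions (i)--(v) of Proposition~\ref{prop:tame}, which in particular pins down $X\cong X_{0,p}$ through condition (iii). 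The content of the theorem is therefore that these lattice conditions are equivalent, on the nose, to $G$ embedding in $M_{23}$ with exactly four orbits whose length product is a non-square modulo $p$.

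The crucial input making the translation possible is Mukai's and H\"ohn--Mason's identification of the invariant lattice with a \emph{Mathieu representation}. Concretely, a subgroup $G\subset M_{23}$ acting on $\Omega=\{1,\dots,24\}$ with orbits of lengths $l_1,\dots,l_r$ gives rise to a $G$-action on the Leech lattice $\Lambda$ for which $\rank\Lambda^G=r-1$, and the invariant lattice $\Lambda^G$ is identified with the orthogonal complement of the all-ones vector inside the permutation lattice spanned by the orbit sums; its discriminant form is governed by the orbit lengths. In the four-orbit case $r=4$, I would make explicit the classical computation that $\Lambda^G$ has rank $3$, and that
\begin{eqnarray}
\det\Lambda^G = \pm\, l_1 l_2 l_3 l_4
\end{eqnarray}
up to squares, with length $l(A(\Lambda^G))\leq 2$ holding automatically since the glue between the orbit-sum lattice and $\Lambda$ is constrained by the Leech lattice's unimodularity. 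This already delivers condition (ii) for free, and reduces condition (iv), $\left(\frac{\det\Lambda^G}{p}\right)=-1$, precisely to the Legendre condition \eqref{eq:Legendre-tame}. Condition (i), $p\nmid|G|$, is exactly tameness.

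The main obstacle, and the step I expect to require the most care, is verifying condition (v) of Proposition~\ref{prop:tame}: the local sign product identity \eqref{eq:det_p'} at each prime $p'\mid\det\Lambda^G$ with $l_{p'}(A(\Lambda^G))=2$. This is where the precise $p'$-adic symbols of the orbit lattices enter, and where one must check that the genus-theoretic signs coming from the H\"ohn--Mason data are compatible with gluing $\Lambda_G$ to a rank-two hyperbolic even lattice $M'=\NS(X_{0,p})^G$. I would handle this by going through the finitely many $4$-orbit subgroups of $M_{23}$ --- the maximal ones being the $10$ groups from \cite[Thm.~5.2]{DKeum} in Table~\ref{tab} --- and for each checking that once the Legendre condition \eqref{eq:Legendre-tame} is imposed, condition (v) holds automatically; the cases where $\det\Lambda^G$ is a perfect square (so that \eqref{eq:Legendre-tame} can never be satisfied for any $p\nmid|G|$) are exactly those deferred to the wild analysis in \S\ref{ss:square}. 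The reverse direction is then the straightforward observation that any $G$ satisfying (i)--(v) yields, via Proposition~\ref{prop:tame} and the crystalline Torelli theorem, a geometric symplectic action realized on $X_{0,p}$, with the four orbits recovered as the orbit decomposition of the associated Mathieu representation. Finally, uniqueness of $X\cong X_{0,p}$ follows since condition (iii) forces Artin invariant $\sigma_0=1$.
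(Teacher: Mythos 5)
Your overall strategy --- reduce to the lattice conditions of Proposition~\ref{prop:tame}, filter the H\"ohn--Mason list, translate condition (iv) into \eqref{eq:Legendre-tame}, and invoke the crystalline Torelli theorem for existence --- is the same as the paper's, but there are two genuine problems. First, in the ``only if'' direction you start from an arbitrary supersingular K3 surface $X$ and immediately invoke Theorem~\ref{cor:rk<=3} to get the action on $\Lambda$. That theorem is stated (and its proof of the bound $\rank(\NS(X)_G)+l(\NS(X)_G)\leq 24$ only works) for $X_{0,p}$, i.e.\ for Artin invariant $\sigma_0=1$; for higher $\sigma_0$ the length of $\NS(X)$ is $2\sigma_0$ and the embedding into the Leech lattice fails. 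Since $X\cong X_{0,p}$ is part of the \emph{conclusion} of Theorem~\ref{thm:tame}, you cannot assume it, and deducing it from ``condition (iii)'' of Proposition~\ref{prop:tame} is circular: that proposition already presupposes $\Lambda_G\cong\NS(X)_G$. The paper closes this gap with Proposition~\ref{prop:s_vs_s}, which proves $\sigma_0(X)=1$ directly from the tame action with $\rank\NS(X)^G\leq 5$, using the quotient surface $Y$, Shioda's Lefschetz-number argument, and the $p$-unimodularity of $\NS(X)_G$ for tame actions. Some such argument is indispensable in your write-up.

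Second, your ``crucial input'' is stated incorrectly: for $G\subset M_{23}$ with $r$ orbits the restriction of the $24$-dimensional Conway representation is the permutation representation, so $\rank\Lambda^G=r$, not $r-1$; in the four-orbit case $\Lambda^G$ has rank $4$ (consistent with your own earlier deduction $\rank\Lambda^G=\rank\NS(X)^G+2=4$), and it is not the orthogonal complement of the all-ones vector inside the orbit-sum lattice. Consequently your proposed structural derivation of $\det\Lambda^G\equiv l_1l_2l_3l_4$ modulo squares, and of the length bound $l(A(\Lambda^G))\leq 2$, does not go through as written. The paper instead obtains the square-class identity by inspection of the H\"ohn--Mason tables, and uses conditions (ii) and (iv) of Proposition~\ref{prop:tame} precisely to \emph{cut down} the full rank-$4$ list of \cite{HM290} to the ten maximal groups of Table~\ref{tab} (length $>2$ or square determinant excludes everything else) --- condition (ii) is not automatic and doing this filtering is part of the proof of the ``only if'' direction. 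Finally, you overlook the subtlety for non-maximal $4$-orbit subgroups $G'$ whose maximal overgroup $G$ is wild at $p$ while $G'$ is tame (three cases in the paper); there the deduction of \eqref{eq:Legendre-tame} from the maximal group breaks down and must be checked separately using $p\nmid\det\Lambda^{G'}$.
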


\begin{proof}
{
Let $G$ act tamely symplectically on a supersingular K3 surface $X$ in characteristic $p$
such that $\rank\NS(X)^G=2$. In order to apply Proposition \ref{prop:tame},
we have to show that $G\subset O(\Lambda)$.
To see this, it suffices to prove that $\sigma_0(X)=1$
and apply Theorem \ref{cor:rk<=3}.
This problem serves as an incentive for a little detour which also shows
that the two Artin invariant conditions from \cite[Thm 4.7]{DKeum} (featuring below as $\sigma_0(X)=1$ and $\sigma_0(Y)=1$) 
are in fact equivalent:

\begin{proposition}
\label{prop:s_vs_s}
Let a group $G$ act tamely symplectically on a supersingular K3 surface $X$
such that $\rank(\NS(X)^G)\leq 5$. Denote the minimal resolution of $X/G$ by $Y$.
Then $Y$ is also supersingular, and $\sigma_0(X)=\sigma_0(Y)=1$.
\end{proposition}

\begin{proof}
%
For starters, the supersingularity of $Y$ classically follows 
from Shioda's theorem on the Lefschetz number \cite[Prop.\ 5]{Shioda-Alg}.
Precisely, $Y$ arises from the quotient $X/G$ by resolving some ADE-singularities
(hence is indeed K3) whose exceptional curves generate a root lattice $R\subset\NS(Y)$ of rank $\geq 19\;  (= \rank(\NS(X)_G)$).
By construction, $p\nmid\det(R)$ whence the isomorphism of $p$-Sylow-groups $A(\NS(Y))_p\cong A(R^\perp)_p$ follows.
But then, as usual, 
\begin{eqnarray}
\label{eq:R}
2\sigma_0(Y) = l_p(\NS(Y)) = l_p(R^\perp) \leq \rank(R^\perp) \leq 3,
\end{eqnarray}
which gives the claim $\sigma_0(Y)=1$.

To see that also $\sigma_0(X)=1$, we can appeal to the general argument from \cite[Lemma 4.1]{WZ}
to infer that $\NS(X)_G$ is $p$-unimodular.
Hence the above argument for $Y$ literally carries over to $X$ to show that 
$$2\sigma_0(X) = l_p(\NS(X)^G) \leq \rank (\NS(X)^G)
$$
which presently implies $\sigma_0(X)=1$ as claimed.
\end{proof}

\begin{remark}
It follows more generally from the birational invariance of the Lefschetz number \cite{Shioda-Alg} 
that any two K3 surfaces $X,Y$ admitting a  dominant rational map 
$X\dasharrow Y$ with separable Galois closure share the same Picard number.
In fact, if $\rho(X)=\rho(Y)<22$ and the degree of the Galois closure is relatively prime to the characteristic, 
then the heights  also agree by \cite[Lem.\ 5.3]{KS-Zariski}.
T.\ Katsura informed us that there is  a generalization of 
the last statement of Proposition \ref{prop:s_vs_s} 
using the a-number defined in \cite[Def.\ 2.1]{vdGK}.
Namely, the a-number is preserved in the above setting in odd characteristic by \cite[Lem.\ 5.4]{KS-Zariski},
and Katsura can show for a supersingular K3 surface $X$ in odd characteristic that $a(X)=1 \Leftrightarrow \sigma_0(X)=1$.
\end{remark}

We return to the proof of Theorem \ref{thm:tame}.
With Proposition \ref{prop:s_vs_s} in place, we infer from Theorem \ref{cor:rk<=3}
that $G\subset O(\Lambda)$ with $\Lambda^G$ of rank 4,
so we can apply Proposition \ref{prop:tame}.
With the conditions from Proposition \ref{prop:tame}, }
the tables of \cite{HM290} return  exactly the 10 maximal groups listed in Table \ref{tab} 
(confirming \cite[Thm.\ 5.2 and Rem.\ 5.3 (3)]{DKeum}).
Indeed, for all other groups in \cite{HM290} with rank$(\Lambda^G)=4$,
the fixed lattice $\Lambda^G$ has either length greater than 2 (not satisfying Proposition \ref{prop:tame} (ii))
or determinant a perfect square (incompatible with Proposition \ref{prop:tame} (iv)).

\begin{table}[ht!]
$$
\begin{array}{|cccccccc|}
\hline
\text{no.} & G &\;\;\;& |G| &\;\;\;& \text{genus symbol} & \;\;\;& l_1, l_2, l_3, l_4 \\
\hline
102 & M_{21} && 2^7.3^2.5.7&&
2_\II^{-2}3^{-1}7^{-1} && 1,1,1,21
\\

106 & 2^4:\mathfrak A_6 && 2^7.3^2.5 && 4_5^{-1} 8_1^{+1}3^{+1}
&& 1,1,6,16
\\
108 & \mathfrak A_7 && 2^3.3^2.5.7&&
3^{+1}5^{+1}7^{+1} && 1,1,7,15
\\
110 & M_{20}:2 && 2^7.3.5&&
4_3^{-1}8_1^{+1}5^{-1} && 
\begin{matrix}
1,1,2,20\\
1,2,5,16
\end{matrix}
\\
111 & 2^3:L_2(7) && 2^6.3.7&&
4_2^{+2}7^{+1}
 &&
\begin{matrix}
1,1,8,14\\
1,7,8,8
\end{matrix}
\\
112 & {2^2}.\mathfrak A_{4,4} && 2^7.3^2&&
8_6^{-2}3^{-1} &&
1,3,4,16
\\
118 & \mathfrak S_6 && 2^4.3^2.5 && 2_\II^{-2}3^{+2}5^{+1} && 1,2,6,15
\\
119 & M_{10} && 2^4.3^2.5 && 2_5^{+1}4_1^{+1}3^{-1}5^{+1} && 1 ,1, 10,12
\\
121 & 2^4:\mathfrak S_{3,3} && 2^6.3^2&&
4_7^{+1}8_1^{+1}3^{+2} &&
1,3,8,12\\

134 & 3^2:SD_{16} && 2^4.3^2&&
2_1^{+1}4_1^{+1}3^{-1}9^{-1} && 1,2,9,12
\\
\hline
\end{array}
$$
\caption{Maximal tame groups with 4 orbits}
\label{tab}
\end{table}

Table 1 in \cite{HM290} also indicates that all these groups are subgroups of $M_{23}$ with 4 orbits
(so the claim holds true also for their 17 subgroups with 4 orbits as classified in \cite[Thm.\ 5.2]{DKeum}).
By inspection, the product of orbit lengths is in the same square class as $\det\Lambda^G$
for the given $G$-action on $\Lambda$,
so condition (iv) from Proposition \ref{prop:tame}
translates exactly into \eqref{eq:Legendre-tame}.
This confirms  the if-direction of the  theorem for all tame maximal groups from \cite{HM290}.

{
It remains to consider the actions of non-maximal subgroups $G'$ of $M_{23}$ with 4 orbits on $\Lambda$.
Clearly they have the same orbits lengths and the same fixed lattice $\Lambda^{G'}$
as the maximal groups $G$ with 4 orbits extending their action.
Hence \eqref{eq:Legendre-tame} holds true automatically unless $G$ is wild while $G'$ is tame.
This, however, may only happen in 3 special cases:
$$
\begin{array}{c|c|c}
G' & G & p\\
\hline
2^4:(3^2:4) & 2^4:\mathfrak A_6 & 5\\
2^4:(5:4) & M_{20}:2 & 3\\
2^3:7 & 2^3:L_2(7) & 3
\end{array}
$$
In each case, $p\nmid\det(\Lambda^{G'})=\det(\Lambda^G)$,
and same for the product of the orbit lengths,
so our previous reasoning goes through for $G'$ to derive \eqref{eq:Legendre-tame}
(cf.\ Remark \ref{rem:tame-wild}). 
This completes the proof of the if-direction of Theorem \ref{thm:tame}.
%

 The converse direction is quite easy.
 All groups from Table \ref{tab} satisfy conditions (ii), (v) of Proposition \ref{prop:tame},
 hence we can apply the crystalline Torelli Theorem \ref{thm:cTorelli}
  to derive the symplectic $G$-action on $X_{0,p}$ 
 in all characteristics $p$ satisfying $p\nmid |G|$ and \eqref{eq:Legendre-tame}.
 Here we also use that tameness implies $p>3$ by inspection of Table \ref{tab}.
 At the same time, this settles all subgroups of the maximal groups,
 except possibly for the above special cases where $G'$ is tame, but $G$ is not.
There, however, $l_1l_2l_3l_4$ is a square modulo $p$, so there is nothing left to prove. 
 }
 \end{proof}
%
%
%
%

\begin{remark}
Due to the determinant condition,
symplectic actions of the groups from Table \ref{tab} can be realized in exactly half of the characteristics,
but there is one subgroup with 4 orbits, namely $O_{48}$,
which has two different actions on $\{1,\hdots,24\}$ with  orbit lengths
$1,1,6,16$ resp.\ $1,3,4,16$
such that their respective products lie in two different square classes.
The group $O_{48}$, which can be represented as a subgroup of both $2^4:\mathfrak A_6$ and $ {2^2}.\mathfrak A_{4,4}$,
can thus  be realized in 75\% of all characteristics.
\end{remark}

\begin{remark}
The groups in Table \ref{tab} were determined in \cite{DKeum}
as the maximal possible tame finite symplectic automorphism groups of (supersingular) K3 surfaces.
Existence, however, was left open.
Our result covers both aspects;
in addition, our approach offers the additional advantage of being completely independent of 
the arguments in \cite{DKeum}.

We take this opportunity to note two corrections to \cite{DKeum}.
For the group $\#121$ in Table \ref{tab},
\cite[Thm.\ 5.2]{DKeum} states an isomorphism 
$2^4:\mathfrak S_{3,3}\cong\mathfrak S_{4,4}\cong \mathfrak S_4 \times\mathfrak S_4$.
This cannot be correct since the group on the right hand side contains elements of order 12, but $M_{23}$ does not
by Table \ref{tab:23}.

In \cite[Rem.\ 5.3 (1)]{DKeum},
it is stated that $M_{23}$ admits another subgroup, namely $M_{20}':2$,
which does not act tamely symplectically on any supersingular K3 surface.
However, with orbit lengths $(1,2,5,16)$ as stated in loc.\ cit.,
this would necessarily act symplectically on $X_{0,p}$ whenever $(10/p)=-1$,
for instance by the geometric construction explained in the next remark.
Indeed,
the Mathieu character applied to $M_{20}':2$ does not return an integer,
so this group cannot be a subgroup of $M_{23}$.
\end{remark}

\subsection{Alternative geometric approach}

There is an intuitive geometric reasoning behind Proposition \ref{prop:tame} and Theorem \ref{thm:tame}
which combines the lines of argument of Kond\=o--Mukai \cite{Kondo-Mukai} 
and of Xiao \cite{Xiao}.

Namely, one can pursue a case-by-case analysis and
consider the $G$-quotients of $X_{0,p}$ for all $G$ in Table \ref{tab}.
By Proposition \ref{prop:s_vs_s},
their minimal resolutions $Y$ are again isomorphic to $X_{0,p}$.
Then we can rule out all those characteristics
not satisfying \eqref{eq:Legendre-tame}
by checking that the (primitive closure of the) 
root lattice $R$ spanned by the exceptional curves on the minimal resolution of the quotient 
(uniquely determined using \cite[Thm.\ 5.2]{DKeum} and, for the saturation of $R$, \cite{S-nodal})
does not embed  into $\NS(X_{0,p})$.

On the other hand, one can
use the property that $G\subset M_{23}$ to endow the Niemeier lattice $N=N(A_1^{24})$
with a $G$-action 
to exhibit the rootfree lattice  $N_G$ encoded in the orbit lengths;
then check that $N_G$ embeds into $\NS(X_{0,p})$
for those characteristics
 satisfying \eqref{eq:Legendre-tame}
 and apply Theorem \ref{thm:cTorelli} to exhibit the symplectic $G$-action on $X_{0,p}$.

Next to the tedious case-by-case analysis, there are two main drawbacks of this approach.
Firstly, there is no conceptual argument for condition \eqref{eq:Legendre-tame}
(which, in fact, only arises a posteriori).
Secondly, this approach does not lend itself to the wild case
as the quotients need not be K3 in that case.

\section{Classification in rank 4 -- wild case}
\label{s:rk4=wild}

We continue with the wild case in rank 4.
Recall that, by \cite[Thm.\ 2.1]{DKeum}, this reduces the possible characteristics generally to $p\leq 11$.
For $X_{0,p}$, this can also be inferred independently from the classification \cite{HM290}
as noted in Remark \ref{rem:p11}.

\subsection{Wild group actions in characteristic $2$}

We start by ruling out the bulk of $G$-actions with $l_2(\Lambda_G)>\rank \Lambda^G-2$ where
Lemma \ref{lem:p_vs_rk} determines the characteristic
 to be $p=2$.
 As lattices with $v(2)\not\hookrightarrow A(\Lambda_G)$ have already been excluded in  \ref{ss:p=2,rk=3},
 we can glue $\Lambda_G$ to $H^{(2)}$ along $v(2)$ and use Lemma \ref{lem:Witt}
 to determine the genus symbol of the saturated lattice $L' = (\Lambda_G\oplus H^{(2)})'\subset L_{1,25}$.
 Whenever the 2-constituent of $A(\Lambda_G)$ is even, 
 this excludes sign walking between this costituent and the unimodular one;
 then the determinant condition as in Lemma \ref{lem:det} gives an immediate contradiction for no.s
 \begin{eqnarray*}
 8, 11, 12, 14, 16, 25, 27, 32 - 34, 36, 40, 42, 49, 50, 54,  65 - 67, 71, 75,\\
 79,  81, 83, 88 - 90, 
 99, 100, 103, 105, 113, 115, 125, 126, 135,  145, 146.
 \end{eqnarray*}
As an illustration, we give details for the cases where the genus symbol given in \cite{HM290}
has to be corrected. As before, $d'$ denotes the $2$-primary part of $-\det(L')$ (or, equivalently, of $-\det(\Lambda_G)$).

\begin{table}[ht!]
$$
\begin{array}{c|c|c|c|c}
\text{no.} & G & \text{genus symbol of } \Lambda_G & \text{genus symbol of } L' & d' \\
\hline
81 & [2^7] & 2_\II^{+2} 4_6^{+2} 8_7^{+1} & 4_2^{-2} 8_7^{+1} & 1 \\
103 & [2^{12}3] &
2_\II^{+2} 4_3^{-1}8_5^{-1} & 4_7^{+1}8_5^{-1} & -1\\
125 & [2^73] & 2_\II^{-2} 4_7^{+1} 8_7^{+1} 3^{-1} & 4_7^{+1} 8_7^{+1} 3^{-1} & -3\\
145 & [2^43] & 2_\II^{-2} 4_3^{-1} 8_1^{+1} & 4_3^{-1} 8_1^{+1} & -1
\end{array}
$$
\caption{Some groups $G$ with $v(2)\hookrightarrow A(\Lambda_G)$}
\label{table5}
\end{table}

\vspace{-.5cm}

\subsection{Wild group actions in odd characteristic}

There are a few group actions in rank 4
where Lemma \ref{lem:p_vs_rk}  determines the characteristic
$p$ to be odd.
Having excluded a few of them in Section \ref{s:cond},
the remaining three groups 
(not contained in $M_{23}$) 
are covered  below.

\subsubsection*{No.s $101, 109$} 
These groups are subgroups of $U_4(3)$ and thus occur in characteristic $p=3$.

\subsubsection*{No.\ $122$} 
This group is a subgroup of $U_3(5)$ and thus occurs
in characteristic $p=5$.

\subsection{Wild group actions with square determinant}
\label{ss:square}

The considerations in Section \ref{s:rk4=tame} excluded all those tame group actions in rank 4
such that $\det \Lambda^G$ is a perfect square
(by Proposition \ref{prop:tame} (iii)).
Here we verify which actions  are compatible with the wild setting.
It turns out that each group action can be realized 
on the supersingular K3 surface of Artin invariant 1 in exactly one characteristic $p$
(indicated in the last column of Table \ref{table6}).

To see this, we make use of Remark \ref{rem:tame-wild}
and apply Proposition \ref{prop:tame} (iii) whenever $p'\nmid\det\Lambda^G$.
Other wild primes can be excluded by the determinant condition applied to a putative invariant lattice $\NS^G$
(whose determinant agrees with that of $\Lambda_G$ up to sign and squares,
so $d'$ denote the $p'$-primary part of $-\det \Lambda^G$ as usual).

 \begin{table}[ht!]
$$
\begin{array}{c|c|c|c|c|c}
\text{no.} & G & \text{genus symbol of } \Lambda_G & p'\nmid\det\Lambda_G & (d'/p')\neq \varepsilon_{p'} & p  \\
\hline
120 & L_2(11) & 11^{+2} & 2,3,5 & - & 11\\
128 & (3\times A_5):2 & 3^{-2} 5^{-2} & 2 & 3 & 5\\
129 &  L_2(7) \times 2 & 2_\II^{+2} 7^{+2} & 3 & 2 & 7
\end{array}
$$
\caption{Groups $G$ in rank 4 with square determinant}
\label{table6}
\end{table}

\begin{summary}
\label{summary}
The supersingular K3 surface $X_{0,p}$ admits a wild symplectic action by $G$ exactly for the prime $p$ 
indicated in the last column of Table \ref{table6}.
The geometric realizations
follow from Theorem \ref{thm:cTorelli},
but they can also be exhibited explicitly, see \ref{ss:explicit-rk4}.
Each of these three groups is contained in a maximal group from Table \ref{table1},
namely in $M_{22}, \mathfrak A_8$ resp.\ $M_{21}.2_2$.
Note that this is in perfect agreement with Theorem \ref{thm1}.
\end{summary}

\begin{remark}
It can be shown using Theorem \ref{thm:cTorelli}
that the symplectic actions from Summary \ref{summary} extend over any supersingular K3 surface
of Artin invariant $\sigma_0\leq 2$ in the specified characteristic $p$.
\end{remark}

\subsection{Groups from the tame rank 4 case}

We conclude this section by discussing the 10 groups from Table \ref{tab}
for the primes $p\leq 11$, especially in the wild setting.

\begin{proposition}
\label{prop:max}
A group from Table \ref{tab} is realized in characteristic $p\leq 11$
if and only if it is contained in a maximal group from the rank 3 case,
with the exceptions of
$2^2.\mathfrak A_{4,4}$ (no.\ 112) in characteristic $3$ and
 $2^4:\mathfrak S_{3,3}$ (no.\ 121) in characteristics $3$ and $11$.
\end{proposition}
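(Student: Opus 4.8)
The plan is to split the biconditional into its two directions and reduce everything to the realizability data for the individual $\Lambda$-actions attached to the ten groups. The \emph{if}-direction is immediate: if the Table \ref{tab} action of $G$ extends to the action of a rank-$3$ maximal group $G'$ (that is, $G\subset G'\subset\Co_0$ with $\Lambda^{G'}\subset\Lambda^G$) and $G'$ is realized symplectically on $X_{0,p}$, then restricting the symplectic $G'$-action to $G$ realizes the $G$-action on $X_{0,p}$. The entire content therefore lies in the \emph{only if}-direction, for which I would first determine, for each of the ten groups and each prime $p\leq 11$, whether its $\Lambda$-action is realized on $X_{0,p}$ (collecting the outcome in Table \ref{table7}), and then match the realized cases against containment in a rank-$3$ maximal group from Tables \ref{table1} and \ref{table2}.

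To decide realizability I would separate three regimes. For tame primes $p\nmid|G|$, Theorem \ref{thm:tame} settles the question outright: the action is realized precisely when $\left(\frac{l_1l_2l_3l_4}{p}\right)=-1$, so only the Legendre symbols of the orbit-length products of Table \ref{tab} are needed. For wild primes $p\mid|G|$ with $p\nmid\det\Lambda^G$, Remark \ref{rem:tame-wild} shows the argument of Proposition \ref{prop:tame} still applies, so realizability is again governed by the single condition $\left(\frac{\det\Lambda^G}{p}\right)=-1$; this disposes in particular of the square-determinant groups of \ref{ss:square}. The remaining cases --- wild primes with $p\mid\det\Lambda^G$, together with all of $p=2$ --- lie outside the reach of the Legendre criterion and must be treated by direct gluing, checking whether $\Lambda_G$ embeds primitively into $\NS(X_{0,p})\in\II_{1,21}p^{\varepsilon 2}$ (equivalently whether $\Lambda_G\oplus H^{(p)}$ embeds into $L_{1,25}$). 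Here Lemma \ref{lem:p_vs_rk} pins down $p$, Lemma \ref{lem:det} supplies the determinant obstruction, and for $p=2$ the $v(2)$-gluing analysis of Lemmas \ref{lem:odd-const} and \ref{lem:Witt} decides the issue. Wherever an embedding exists, the crystalline Torelli Theorem \ref{thm:cTorelli}, or the explicit models of Section \ref{s:models} when $p=3$, produces the geometric $G$-action.

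With Table \ref{table7} in hand, it remains to verify that every realized action, apart from the flagged ones, extends to a realized rank-$3$ maximal group. I would read off the abstract containments from the maximal-subgroup data in \cite{Atlas} and \cite{HM290} --- for instance $M_{21}\subset M_{22},\,M_{21}.2_1,\,M_{21}.2_2,\,U_4(3)$; $2^4:\mathfrak A_6\subset U_4(3),\,2^4:\mathfrak A_7,\,M_{22}$; $\mathfrak A_7\subset U_4(3),\,\mathfrak A_8,\,2^4:\mathfrak A_7$; $\mathfrak S_6\subset\mathfrak A_8,\,U_4(3)$ --- and in each instance confirm that the Table \ref{tab} action is truly the restriction of the rank-$3$ maximal action, i.e.\ that the invariant lattices nest compatibly. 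The groups $2^2.\mathfrak A_{4,4}$ (no.\ $112$) and $2^4:\mathfrak S_{3,3}$ (no.\ $121$) are exactly those whose realized actions admit no such extension: a direct gluing computation shows that $\Lambda_G$ embeds into $\NS(X_{0,3})$ for both, and into $\NS(X_{0,11})$ for the latter (where $p=11$ is in fact tame, since $11\nmid\det\Lambda^G$), yet neither sits inside $U_4(3)$ (respectively $M_{22}$) with a compatible action. They thus appear as maximal groups in their own right, in agreement with Theorem \ref{thm1}. I expect the main obstacle to be precisely this compatibility check in the wild regime: establishing realizability when $p\mid\det\Lambda^G$ demands the full $2$-adic genus machinery rather than a single Legendre symbol, and verifying that an abstract inclusion $G\subset G'$ genuinely lifts to an inclusion of the \emph{prescribed} $\Lambda$-actions, rather than to some other action of $G$, is the delicate point separating the generic cases from the two exceptions.
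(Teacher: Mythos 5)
Your proposal follows essentially the same route as the paper's proof: decide realizability prime by prime via the Legendre/determinant criteria (Theorem \ref{thm:tame}, Proposition \ref{prop:tame} together with Remark \ref{rem:tame-wild}), handle the leftover wild cases by the $2$-adic (and one $3$-adic) gluing computations with $H^{(p)}$ inside $L_{1,25}$, match the outcome against subgroup containments read off from \cite{Atlas} and the auxiliary data of \cite{HM290}, and realize the two exceptional groups via Theorem \ref{thm:tame} in characteristic $11$ and the explicit models of Section \ref{s:models} in characteristic $3$. One minor slip: tameness of $2^4:\mathfrak S_{3,3}$ at $p=11$ follows from $11\nmid|G|=576$, not from $11\nmid\det\Lambda^G$ (the latter is a consequence via Lemma \ref{lem:aux}), but this does not affect the argument.
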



\begin{proof}
Of course, existence can be derived as a consequence of any subgroup relation with a group from the rank 3 setting.
Here, next to the tables of \cite{Atlas}, the supplementary files available with the paper
are very useful, but one can also use a computer algebra system, notably GAP \cite{GAP}.
For the  two cases with  $G=2^4:\mathfrak S_{3,3}$,
existence follows in characteristic $p=11$ from Theorem \ref{thm:tame},
and in characteristic $p=3$ from the explicit construction in \ref{sss:121}.
For the remaining group $2^2.\mathfrak A_{4,4}$ in characteristic $3$,
the same is achieved in \ref{sss:112}.


For non-existence, we apply the determinant criterion from Proposition \ref{prop:tame},
using Remark \ref{rem:tame-wild} whenever possible.
The remaining cases, indicated by asterisks in Table \ref{table7}, will be explained below.

 \begin{table}[ht!]
$$
\begin{array}{c|c|c||c|c||c|c}
\text{no.} & G & \text{gen. symb } \Lambda_G & \text{ex.} & \text{subgroup of} & \text{non-ex.} &\text{reason}  \\
\hline\hline
102 & M_{21} & 2_\II^{-2}3^{-1}7^{-1}& 2 & M_{21}.2_1
& 5 & (21/5)=1
\\
&&& 3 & U_4(3) &&\\
&&& 7 & M_{21}.2_2 &&\\
&&& 11 & M_{22} &&\\
\hline
106 & 2^4:\mathfrak A_6 & 4_5^{-1} 8_1^{+1}3^{+1}
& 3 & U_4(3) & 2 & *
\\
&&& 7 & 2^4:\mathfrak A_7 & 5 & (6/5)=1\\
&&& 11 & M_{22} &&
\\
\hline
108 & \mathfrak A_7 &
3^{+1}5^{+1}7^{+1} & 3 & U_4(3) & 2 & (105/2)=1\\
&&& 5 & \mathfrak A_8 &&\\
&&& 7 & 2^4:\mathfrak A_7&&\\
&&& 11 & M_{22} &&
\\
\hline
110 & M_{20}:2 &
4_3^{-1}8_1^{+1}5^{-1} &
5 & 2^4 :(3\times\mathfrak A_5):2 & 2 & *\\
&&& 7 & 2^4:\mathfrak A_7
 & 3 & (10/3)=1\\
&&& 11 & M_{22}&&
\\
\hline
111 & 2^3:L_2(7) &
4_2^{+2}7^{+1}
 &
 5 & \mathfrak A_8 & 2 & *\\
 &&& 7 & 2^4:\mathfrak A_7& 3 & (7/3)=1\\
 &&& 11 & M_{22}&&\\
 \hline
112 & {2^2}.\mathfrak A_{4,4} &
8_6^{-2}3^{-1} &
3 & - & 2 & **\\
&&& 5 & 2^4 :(3\times\mathfrak A_5):2 & 11 & (3/11) = 1\\
&&& 7 & 2^4:\mathfrak A_7&&\\
\hline
118 & \mathfrak S_6 & 2_\II^{-2}3^{+2}5^{+1} &
2 & \Aut(\mathfrak S_6) & 11 & (5/11)=1 \\
&&& 3 & U_4(3) &&\\
&&& 5 & \mathfrak A_8&&\\
&&& 7 & M_{21}.2_2&&\\
\hline
119 & M_{10} & 2_5^{+1}4_1^{+1}3^{-1}5^{+1} &
2 & M_{21}.2_1 & 7 & (30/7)=1\\
&&& 3 & U_4(3)&&\\
&&& 5 & U_3(5) &&\\
&&& 11 & M_{22}&&\\
\hline
121 & 2^4:\mathfrak S_{3,3} &
4_7^{+1}8_1^{+1}3^{+2} &
3 & - & 2 & *\\
&&& 5 & \mathfrak A_8 & 7 & (2/7)=1\\
&&& 11 & - &&\\
\hline
134 & 3^2:SD_{16} &
2_1^{+1}4_1^{+1}3^{-1}9^{-1} &
2 & \Aut(\mathfrak S_6) & 3 & ***\\
&&&7 & M_{21}.2_2 & 5 & (6/5)=1\\
&&& 11 & M_{11} &&
\end{array}
$$
\caption{Groups $G$ in rank 4 in small characteristic}
\label{table7}
\end{table}
%

\subsubsection{Reason*}
\label{sss1}

{ 
Consider the embedding $\Lambda_G\oplus H^{(2)}\hookrightarrow L_{1,25}$
with orthogonal complement $\NS(X_{0,2})^G$ of rank $2$ as in Figure \ref{fig:1,25}.
For length reasons, $\Lambda_G$ and $H^{(2)}$ thus have to glue.
By Lemma \ref{lem:odd-const}, however, $A(H^{(2)})\cong v(2)\not\hookrightarrow A(\Lambda_G)$,}
so these two discriminant groups can only glue along a single odd integral 2-torsion vector.
In each case, the glue vector in  $A(\Lambda_G)$ can be taken to be any element $2v$ for $v$ an odd 4-torsion element.
Writing $v(2) = \langle e,f\rangle$, we thus postulate that $e+2v\in L' = (\Lambda_G\oplus H^{(2)})'\subset L_{1,25}$.
But then $w=v+f$ generates $A(L')$ together with the other orthogonal generators of $A(\Lambda_G)$,
and with $w^2 = v^2 + 1$ we obtain the genus symbol of $L'$
with the following properties:
\begin{itemize}
\item
$l_2(L') = l_2(\Lambda_G)=2$;
\item
the $2$-adic symbol has no constituent at $2$ and
\item 
exactly one sign switched compared to the genus symbol of $\Lambda_G$;
\item
the determinant lies in the same square class as $\det(\Lambda_G)$.
\end{itemize}
The first two properties imply that $l_2(\NS(X_{0,2})^G)=2$,
hence  the analogue of the determinant condition from Lemma \ref{lem:det} applies to $L'\hookrightarrow L_{1,25}$.
But then the last two properties give the desired contradiction in each case.

\subsubsection{Reason**}
\label{sss2}

All 2-torsion elements in $A(\Lambda_G)$ are isotropic,
so there cannot be any glue with $H^{(2)}$,
and $L' = \Lambda_G \oplus H^{(2)}\subset L_{1,25}$ attains length $4$, 
exceeding the rank of its purported orthogonal complement $\NS^G$  inside $L_{1,25}$, contradiction.

\subsubsection{Reason***}
\label{sss3}

By considering the values attained by $A(\Lambda_G)$ on the 3-torsion elements,
we detect that  $\Lambda_G$ can only glue  to $H^{(3)}$ along a non-isotropic 3-torsion element of the respective discriminant group.
The resulting lattice $L' = (\Lambda_G \oplus H^{(3)})'\subset L_{1,25}$ turns out to have
genus symbol $2_1^{+1}4_1^{+1}3^{+1}9^{-1}$,
so the determinant condition at $3$ gives the contradiction $(-8/3)=1$.
\end{proof}

\subsection{Concluding remark in rank 4}

For all the remaining $G$-actions in rank $4$ (no.s 124, 133, 137, 141, 149, all not contained in $M_{23}$),
the characteristic is $p=2$ by Lemma \ref{lem:p_vs_rk};
existence thus follows
since the groups are contained in $M_{21}.2_1$ or $\Aut(\mathfrak S_6)$.
This is easy for all groups except for 
no.\ 141 where the supplementary files of \cite{HM290} provide an inclusion  $G\subset\mathfrak A_6.2$.
By \cite{Atlas}, there are 3 such extensions ($\mathfrak S_6, M_{10},$ PGU$_2(9)$), 
but each is contained in $\mathfrak A_6.2^2 = M_{10}.2 = \Aut(\mathfrak S_6)$ confirming the claim.
In fact, using GAP, one can verify that the group $G$ is isomorphic to $\mathfrak S_{3,3}:2$ and thus a normal subgroup
of $3^2:SD_{16}$, one of the maximal groups in rank 4.

In consequence, each of these groups acts symplectically  on the supersingular K3 surface $X_{0,2}$.

%
%
%

\section{Rank $\geq 5$ case}
\label{s:rk5}

With the maximal groups, all subgroups are automatically realized.
Therefore it will not be not hard to show that all subgroups of $M_{23}$ with at least 5 orbits 
 are in fact realized on $X_{0,p}$ in odd characteristic.
For completeness and future reference, we record the precise statement for 
Mukai's 11 maximal groups from the complex setting \cite{Mukai}:
\begin{eqnarray}
\label{eq:max}
\;\;\;\;\;\;\;
L_2(7),\; \mathfrak A_6,\; \mathfrak S_5,\; M_{20},\; F_{384},\; \mathfrak A_{4,4},\;
 T_{192}, \;H_{192}, \;N_{72}, \;M_9, \;T_{48}.
\end{eqnarray}

\begin{proposition}
\label{prop:Mukai}
\begin{enumerate}
\item If $p>2$ is prime, then all groups from \eqref{eq:max} act symplectically 
on some K3 surface in characteristic $p$;
\item
more precisely, the symplectic action can be realized
on $X_{0,p}$, and, 
\item 
for each given $2<p\leq 11$,   
each group from \eqref{eq:max} is contained in one of the maximal rank 3 groups (listed in Theorem \ref{thm1}).
\item
The groups $L_2(7), \mathfrak A_6, \mathfrak S_5, M_{20}, N_{72}, M_9$ 
act symplectically on $X_{0,2}$ as they are all contained in $M_{21}.2_1$ or $\Aut(\mathfrak S_6)$.
\item
The groups
 $F_{384}, \mathfrak A_{4,4}, T_{192}, H_{192}, T_{48}$
 cannot act symplectically on $X_{0,2}$.
 \end{enumerate}
\end{proposition}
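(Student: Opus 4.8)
The plan is to split the five assertions into the existence statements (1)--(4), which reduce to facts already established, and the non-existence statement (5), where the real work lies. Throughout, each group in \eqref{eq:max} acts on $\Lambda$ with exactly five orbits, so $\rank\Lambda^G=5$ and $\rank\NS(X_{0,p})^G=3$ by Theorem \ref{thm:Leech}.

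For (1)--(3) I would first treat the small primes $3\le p\le 11$. Here (3) is purely group-theoretic: using \cite{Atlas} and \cite{GAP} one checks that each of the eleven groups embeds into a maximal rank-$3$ group from Theorem \ref{thm1} for the given $p$ (into $U_4(3)$ when $p=3$; into $U_3(5)$, $\mathfrak A_8$ or $2^4:(3\times\mathfrak A_5):2$ when $p=5$; and so on). Since each such rank-$3$ group has already been realized symplectically on $X_{0,p}$ --- by Theorem \ref{thm:cTorelli} for $p\ge 5$, and by the explicit Fermat-quartic model (no.\ $163$, Section \ref{s:models}) for $p=3$ --- all of their subgroups, in particular the groups of \eqref{eq:max}, are realized as well; this gives (2), hence (1), for $3\le p\le 11$. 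For $p>11$ every group of \eqref{eq:max} is tame, since its order involves only the primes $2,3,5,7$, and I would apply the crystalline Torelli theorem directly. As $p\nmid|G|$, Lemma \ref{lem:aux} gives $p\nmid\det\Lambda_G$, so $\Lambda_G$ is $p$-unimodular; being a sublattice of $\Lambda$ it is negative definite and rootless, so hypothesis (1) of Theorem \ref{thm:cTorelli} is automatic. It then suffices to embed $\Lambda_G$ primitively into a lattice in the genus of $\NS(X_{0,p})$, the complement $M'$ being an even lattice of signature $(1,2)$ with $p$-part $p^{\varepsilon 2}$ ($\varepsilon=-\left(\frac{-1}{p}\right)$) and prime-to-$p$ part anti-isometric to that of $\Lambda_G$. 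By inspection of \cite{HM290} one has $l_q(\Lambda_G)\le 3$ for all primes $q$, so $\rank M'=3$ suffices to accommodate the discriminant form; the rank-two rigidity of the tame rank-$4$ case --- and with it the Legendre obstruction --- then disappears, and Nikulin's existence criterion holds because the signature/oddity congruence \eqref{eq:CS-cond} can always be met by an appropriate choice of the single $p$-unimodular constituent of $M'$, exactly as at the end of the proof of Proposition \ref{prop:tame}. Hypothesis (2) of Theorem \ref{thm:cTorelli} holding by construction, this realizes the $G$-action on $X_{0,p}$.

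Part (4) is again a containment check: one verifies with \cite{Atlas} and \cite{GAP} that $L_2(7),\mathfrak A_6,\mathfrak S_5,M_{20},N_{72},M_9$ each embed into $M_{21}.2_1$ or $\Aut(\mathfrak S_6)$, the two maximal groups in characteristic $2$; since these are realized on $X_{0,2}$ by the explicit models of Section \ref{s:models}, so are their subgroups.

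The genuine difficulty is part (5), and I expect it to be the main obstacle; note that it must be proved by a direct lattice computation rather than deduced from Theorem \ref{thm1}, on which it will in turn rely. For each of $F_{384},\mathfrak A_{4,4},T_{192},H_{192},T_{48}$ I would take the five-orbit action on $\Lambda$, read the $2$-adic genus symbol of $\Lambda_G$ off \cite{HM290} (correcting it to canonical form where needed), and show that no even lattice $M'$ of signature $(1,2)$ glues to $\Lambda_G$ to produce a lattice in $\II_{1,21}\,2_{\II}^{-2}$. Equivalently, working in the Borcherds lattice, I would saturate $\Lambda_G\oplus H^{(2)}$ inside $L_{1,25}$ --- constraining the glue through $A(H^{(2)})\cong v(2)$ via Lemma \ref{lem:odd-const} and pinning down the $2$-adic form of the saturation $L'$ via Lemma \ref{lem:Witt} --- and derive a contradiction from the determinant condition of Lemma \ref{lem:det} at $p=2$, with the usual care for oddity fusion and sign-walking. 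The crux is precisely this $2$-adic bookkeeping: unlike at $p=3$, where the very same groups fit into $U_4(3)$ and the complement can be made $3$-elementary, the rigidity of the type-$\II$ symbol $2_{\II}^{-2}$ admits no compatible even complement, and verifying this uniformly across the five groups is where the computation must be carried out carefully.
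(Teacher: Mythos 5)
Your treatment of (1)--(4) follows the paper's route essentially verbatim: containment in the realized maximal groups for $3\le p\le 11$ and for $p=2$, and for $p>11$ the gluing of the $p$-unimodular lattice $\Lambda_G$ to an auxiliary even lattice $M'$ of signature $(1,2)$ with $p$-part $p^{\varepsilon 2}$, exactly as the paper adapts the end of the proof of Proposition \ref{prop:tame}. That part is fine.

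Part (5), however, has a genuine gap. You only exclude \emph{the} five-orbit action of each group on $\Lambda$ listed in \cite{HM290}. But a symplectic action of, say, $\mathfrak A_{4,4}$ on $X_{0,2}$ need not induce that particular conjugacy class of embedding into $\Co_0$: the same abstract group can occur inside $O(\Lambda)$ with several non-conjugate coinvariant lattices (this is exactly why Corollary \ref{cor:anyM_23} has to single out subgroups ``with the same orbit decomposition'' and still finds exceptions such as $\Gamma_2d$, $\Gamma_3e$, $4^2.D_6$ that \emph{are} realized in characteristic $2$ through a different action). In particular, if one of your five groups were an abstract subgroup of $M_{21}.2_1$ or $\Aut(\mathfrak S_6)$ --- both realized on $X_{0,2}$ --- it would act symplectically on $X_{0,2}$ by restriction, with a coinvariant lattice different from the one you ruled out, and your determinant computation would say nothing about it. The paper therefore adds a second, indispensable step: a case-by-case verification (illustrated in detail for $T_{48}$, using $5\mid\det(\Lambda^{M_{21}.2_1})$ and Lemma \ref{lem:aux} to force $\Lambda^G\subsetneq\Lambda^{M_{21}.2_1}$ and then descending through the maximal subgroups with GAP) that none of $F_{384},\mathfrak A_{4,4},T_{192},H_{192},T_{48}$ embeds abstractly into $M_{21}.2_1$ or $\Aut(\mathfrak S_6)$. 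Without this step your argument does not prove (5). A smaller point: for $T_{48}$ the exclusion of the listed action does not go through the determinant condition at all --- $A(\Lambda_{T_{48}})$ admits no gluing to $v(2)$, so the contradiction is a length count as in \ref{sss2} rather than the sign computation you describe.
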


\begin{proof}
Statements (3) and (4) are easy to check  using \cite{Atlas} or the auxiliary files of \cite{HM290};
 this proves (1) and (2) at once for all odd primes $p\leq 11$.

To realize the full scope of (1) (for the remaining primes, but also for many groups for $p\leq 11$), 
one can argue directly with the equations given in \cite{Mukai},
noting that 
\begin{itemize}
\item
If $p\nmid |G|$, then the model has good reduction at $p$.
\item
If $p^2\nmid |G|$, then the model has good reduction or
acquires isolated rational double point singularities upon reducing modulo $p$,
so the minimal resolution is a K3 surface.
\end{itemize}

At a tame prime $p>11$,
we have $p\nmid\det \Lambda_G$ by Lemma \ref{lem:aux},
so (2) can be proved similarly to Proposition \ref{prop:tame}.
Namely $\Lambda_G$ glues into the K3 lattice $\Lambda_\text{K3}$
with orthogonal complement $T$ of signature $(3,0)$.
We derive an auxiliary lattice $M'$ of signature $(1,2)$
by postulating the same genus symbol as for $T$, but with $p^{\varepsilon 2p}$ added.
As in the proof of Proposition \ref{prop:tame}, the discrepancies in signature and $p$-excess
balance out, and evenness of $M'$ follows accordingly.
Hence $\Lambda_G$ glues into $\NS(X_{0,p})$,
and the symplectic action on $X_{0,p}$ follows from Theorem \ref{thm:cTorelli},
proving (2) for all odd primes.

As for (5), we 
consider the single cases.
 First we rule out the $G$-actions on $\Lambda$ from \cite{HM290}.
 To start with, 
$A(T_{48})$ does not glue at all to $v(2)$,
so we get the same contradiction as in \ref{sss2}.

For $G=\mathfrak A_{4,4}$,  
the coinvariant lattice $\Lambda_G$ glues to $H^{(2)}$ along a single vector of the discriminant groups (by Lemma \ref{lem:odd-const}),
and $L'=(\Lambda_G\oplus H^{(2)})'\subset L_{1,25}$ has genus symbol $2_\II^{-2} 8_1^{+1} 3 ^{+2}$,
so the determinant condition $(9/2)=1$ gives a contradiction for its putative orthogonal complement.

The three remaining group actions have no 2-constituent, so they can be treated as in \ref{sss1}, leading to the same kind of contradiction
based on the determinant condition. 
This excludes the actions on $\Lambda$ from \cite{HM290}.

It remains to check that the five groups from (5) are not abstract subgroups of the maximal groups in characteristic $2$,
i.e.\ of $M_{21}.2_1$ or
$\Aut(\mathfrak{S}_6)$ (since the only other remaining groups, to be excluded in \ref{ss:remain},
are out of question for size reasons).
This amounts to a  case-by-care analysis using \cite{Atlas}, \cite{HM290} and GAP \cite{GAP}
(and the GAP identifiers listed in \cite[Table 10.2]{Hashi})
As an illustration, we sketch the case  $G=T_{48}$.
With GAP, one can directly verify that $G\not\subset \Aut(\mathfrak{S}_6)$.
Assuming that $G\subset M_{21}.2_1$, it is convenient to reduce the complexity as follows;
namely, since $5\mid\det(\Lambda^{M_{21}.2_1})$, 
Lemma \ref{lem:aux} implies that $\Lambda^G\subsetneq \Lambda^{M_{21}.2_1}$.
Hence $G\subset G'$ for some subgroup $G'\subset \Lambda^{M_{21}.2_1}$ such that $\rank(\Lambda^{G'})\geq 4$.
By the auxiliary files of \cite{HM290}, there are 5 maximal such groups
of which $\mathfrak S_5$ cannot contain $G$ for size reasons.
For the other 4, one can either use GAP directly or, for $M_{21}$ and $M_{10}$, reduce further as above.

The arguments for the groups $F_{384}, \mathfrak A_{4,4}, T_{192}, H_{192}$ are similar and thus omitted for brevity.
\end{proof}

\subsection{Remaining group actions}
\label{ss:remain}

Considering all subgroup relations among the groups acting on $\Lambda$ with $\rank \Lambda^G \geq 3$,
and groups excluded so far,
there are only  four $G$-actions uncovered as of yet, namely no.s 28, 39, 41, 43.

Each implies $p=2$ by Lemma \ref{lem:p_vs_rk}
and can be ruled out along the same lines as in the proof of Proposition \ref{prop:Mukai} (5).
We therefore omit the details.

\begin{conclusion}
\label{conclusion:all}
For each group  from \cite{HM290},
we have decided whether it acts symplectically on $X_{0,p}$ in any given characteristic $p$.
\end{conclusion}

For completeness, we record the following summary including the wild setting:

\begin{corollary}
\label{cor:anyM_23}
Any subgroup of $M_{23}$ with at least 5 orbits can be realized symplectically on $X_{0,p}$
with the exception of characteristic $p=2$
where none of
$F_{384}, \mathfrak A_{4,4}, T_{192}, H_{192}, T_{48}, \mathfrak{A}_{4,3},  \Gamma_{25}a_1, \Gamma_5 a_1$
is realized
and same for all subgroups with same orbit decomposition
except for $\Gamma_2d, \Gamma_3e, 4^2.D_6$.
\end{corollary}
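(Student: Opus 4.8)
The plan is to reorganize the realizability decisions already established in Proposition~\ref{prop:Mukai} and summarized in Conclusion~\ref{conclusion:all}, grouping the relevant subgroups of $M_{23}$ by their orbit decomposition. Throughout I use two elementary facts: realizing a group $G$ symplectically on $X_{0,p}$ realizes every subgroup of $G$ by restriction, and a subgroup of a group with at least $5$ orbits again has at least $5$ orbits.

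First I would settle all odd primes. By Proposition~\ref{prop:Mukai}(2), each of Mukai's $11$ maximal groups from \eqref{eq:max} is realized symplectically on $X_{0,p}$ for every odd $p$, and by Theorem~\ref{thm:Mukai} every subgroup of $M_{23}$ with at least $5$ orbits lies in one of them. The subgroup principle then yields the full realizability statement for all odd characteristics simultaneously, so no exceptions occur there.

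The characteristic $2$ case is the substantial one, and here the decisive input is Theorem~\ref{thm1}: a group acts symplectically on $X_{0,2}$ if and only if it is contained in $M_{21}.2_1$ or in $\Aut(\mathfrak S_6)$. For the positive side, Proposition~\ref{prop:Mukai}(4) realizes the six maximal groups $L_2(7),\mathfrak A_6,\mathfrak S_5,M_{20},N_{72},M_9$ on $X_{0,2}$, hence all of their subgroups. For the negative side, Proposition~\ref{prop:Mukai}(5) excludes the five maximal groups $F_{384},\mathfrak A_{4,4},T_{192},H_{192},T_{48}$; the same lattice-embedding obstruction -- the impossibility of gluing $\Lambda_G\oplus H^{(2)}$ into $L_{1,25}$ with a rank-two complement, cf.\ the proof of Proposition~\ref{prop:Mukai}(5) -- additionally rules out $\mathfrak{A}_{4,3},\Gamma_{25}a_1,\Gamma_5 a_1$. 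Since this obstruction reads off the genus of $\Lambda_G$, which for subgroups of $M_{23}$ is determined by the orbit decomposition (in line with the Niemeier-lattice description used above), it applies to every subgroup sharing the orbit decomposition of one of these eight groups, giving the bulk of the claimed exclusions.

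The delicate point -- which I expect to be the main obstacle -- is isolating the three genuine exceptions $\Gamma_2 d,\Gamma_3 e,4^2.D_6$. Each of these has the same orbit decomposition as one of the excluded groups, so the obstruction attached to its \emph{$M_{23}$-action} would suggest non-realizability; nevertheless each admits a different action through one of the two characteristic-$2$ maximal groups. Concretely, I would verify with GAP \cite{GAP}, using the identifiers of \cite[Table 10.2]{Hashi} together with \cite{Atlas} and the auxiliary files of \cite{HM290}, that $\Gamma_2 d,\Gamma_3 e$ and $4^2.D_6$ are each abstractly contained in $M_{21}.2_1$ or $\Aut(\mathfrak S_6)$, so that Theorem~\ref{thm1} (or the explicit models of Section~\ref{s:models}) yields the symplectic action on $X_{0,2}$. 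The same computations confirm that none of the eight excluded groups embeds into either maximal group, matching Theorem~\ref{thm1}. Assembling the odd and even characteristic conclusions -- all of which are subsumed in Conclusion~\ref{conclusion:all} -- gives the corollary.
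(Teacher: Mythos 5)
Your overall architecture matches the paper's: for odd $p$ the statement is exactly Proposition \ref{prop:Mukai}~(2) plus the subgroup principle, and for $p=2$ one combines Proposition \ref{prop:Mukai}~(4),(5) with the exclusions from \S\ref{ss:remain}. The positive half of your argument is fine (one presentational caveat: Theorem \ref{thm1} is only proved in Section \ref{s:proofs} from the material culminating in Conclusion \ref{conclusion:all}, so you should cite the latter, or the rank-$3$ classification directly, rather than Theorem \ref{thm1} itself).

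The genuine gap is in your treatment of the excluded groups' subgroups. You argue that the lattice obstruction ``applies to every subgroup sharing the orbit decomposition of one of these eight groups'' because the genus of $\Lambda_G$ is determined by the orbit decomposition. But that obstruction is attached to one specific conjugacy class of $G$ in $\Co_0=O(\Lambda)$ --- the action induced from $M_{23}\subset\Co_0$ --- and it says nothing about a symplectic realization arising from a \emph{different} embedding $G\hookrightarrow\Co_0$ with a different coinvariant lattice. Your own third paragraph shows that the argument proves too much: $\Gamma_2d$, $\Gamma_3e$ and $4^2.D_6$ share an orbit decomposition with an excluded group, hence fail the very same obstruction for their $M_{23}$-induced action, and yet they are realized via another action (they lie in group no.\ 124 of \cite{HM290}, hence in $M_{21}.2_1$). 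Consequently, excluding the remaining subgroups requires checking, group by group, that none of them is abstractly contained in $M_{21}.2_1$ or $\Aut(\mathfrak S_6)$ --- which is exactly what the paper does for all $22$ subgroups listed in \cite[Table 10.2]{Hashi} (with $T_{24}$ occurring under both $T_{48}$ and $T_{192}$). You carry out this containment check only for the eight maximal groups and the three exceptions; the remaining $19$ intermediate subgroups, being smaller and hence a priori easier to embed, are left unverified, so the ``bulk of the claimed exclusions'' is not actually established. Filling this in with the same GAP/\cite{Atlas} computations you already invoke would complete the proof.
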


\begin{proof}
For $p>2$, the statement follows from Proposition \ref{prop:Mukai} (2).
For $p=2$, the eight given subgroups of $M_{23}$ stated explicitly are ruled out in Proposition \ref{prop:Mukai} (5)
and in the first lines of this subsection (the group corresponding to no.\ 41 is not a subgroup of $M_{23}$).

As for their subgroups with same orbit decomposition, 
\cite[Table 10.2]{Hashi} lists 22 of them ($T_{24}$ being contained in both $T_{48}$ and $T_{192}$).
Among them, the three listed groups are contained in $M_{21}.2_1$
(more precisely, in the group no.\ 124 from \cite{HM290}).
For all other groups one can show along the same lines as before 
that none is contained in $\Aut(\mathfrak S_6)$ or $M_{21}.2_1$.
\end{proof}

\subsection{Connection with complex setting}
\label{ss:Mukai}

In fact, we can turn the above arguments around to give an independent proof of Mukai's classification over $\CC$
(Theorem \ref{thm:Mukai}).
It can be derived from \cite{HM290} and our findings based on the following general result:

\begin{theorem}
\label{thm:equiv}
The following are equivalent for a  finite group $G$ 
 and a prime $p$ such that $p\nmid |G|$:
\begin{itemize}
\item
$G$ acts  tamely symplectically on the supersingular K3 surface $X_{0,p}$ 
such that $\rank\NS(X_{0,p})_G\leq 19$.
\item
$G$ acts symplectically on some complex K3 surface.
\end{itemize}
\end{theorem}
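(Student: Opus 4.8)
The plan is to show that each of the two conditions is equivalent to the single combinatorial statement that $G$ is conjugate to a subgroup of $M_{23}$ acting with at least $5$ orbits on $\{1,\dots,24\}$. For the complex bullet this is exactly Mukai's Theorem \ref{thm:Mukai}, so the entire content lies in translating the supersingular condition into this form. I would begin with the purely numerical observation that, since $\rank\NS(X_{0,p})=22$ and $\NS(X_{0,p})_G=(\NS(X_{0,p})^G)^\perp$, the hypothesis $\rank\NS(X_{0,p})_G\leq 19$ is the same as $\rank\NS(X_{0,p})^G\geq 3$; using the identification $\NS(X_{0,p})_G\cong\Lambda_G$ from Theorem \ref{cor:rk<=3} together with $\rank\NS(X_{0,p})^G=\rank\Lambda^G-2$ (cf.\ Corollary \ref{length-condition}), this reads $\rank\Lambda^G\geq 5$.

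For the forward direction, suppose $G$ acts tamely symplectically on $X_{0,p}$ with $\rank\NS(X_{0,p})_G\leq 19$. Theorem \ref{cor:rk<=3} realizes $G$ inside $\Co_0=O(\Lambda)$ with rootless coinvariant lattice $\Lambda_G\cong\NS(X_{0,p})_G$, so by the reduction above $\rank\Lambda^G\geq 5$. I would then appeal to the classification of \cite{HM290}: every such $G\subset\Co_0$ occurs in their tables as a subgroup of $M_{23}$, recorded together with its orbit decomposition on the $24$ letters. The crucial point is that the number of orbits equals $\rank\Lambda^G$, which holds because the rational Leech representation of $M_{24}$ coincides with its permutation representation (both having character $g\mapsto \#\mathrm{Fix}(g)$), so that $\dim\Lambda^G=\langle\chi_{\mathrm{perm}},1\rangle$ is precisely the orbit count. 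Hence $G\subset M_{23}$ with at least $5$ orbits, and Mukai's Theorem \ref{thm:Mukai} yields the symplectic action on a complex K3 surface.

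For the converse, Mukai's theorem gives $G\subset M_{23}$ with at least $5$ orbits, so the same orbit/rank identity yields $\rank\Lambda^G\geq 5$, i.e.\ $\rank\Lambda_G\leq 19$. Since $p\nmid|G|$ and $\Lambda$ is unimodular, Lemma \ref{lem:aux} shows $p\nmid\det\Lambda_G$, so $\Lambda_G$ is $p$-unimodular; it therefore glues into $\NS(X_{0,p})$ along a hyperbolic complement of rank $\geq 3$ exactly as in the proof of Proposition \ref{prop:Mukai}(2), and the symplectic $G$-action on $X_{0,p}$ follows from the crystalline Torelli Theorem \ref{thm:cTorelli} when $p>3$, and from the explicit models, respectively the containment of $G$ in a maximal group realized on $X_{0,p}$, when $p\in\{2,3\}$ (cf.\ Proposition \ref{prop:Mukai}). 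By construction $\rank\NS(X_{0,p})_G=\rank\Lambda_G\leq 19$, closing the loop.

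The hard part will be the bridge used in both directions: identifying a finite $G\subset\Co_0$ with $\rank\Lambda^G\geq 5$ and rootless $\Lambda_G$ as a subgroup of $M_{23}$ whose orbit number equals $\rank\Lambda^G$. I expect this to rest on the Höhn--Mason classification \cite{HM290} combined with the Mathieu representation theory of \cite{Mukai}, rather than on a short conceptual argument. The delicate point is matching the lattice invariant $\rank\Lambda^G$ with the permutation-theoretic orbit count and confirming that the rootlessness of $\Lambda_G$ (built in here via Theorem \ref{thm:cTorelli}) is exactly the condition distinguishing the Mukai groups, so that no group with $\geq 5$ orbits is lost or gained when passing between the supersingular and complex pictures.
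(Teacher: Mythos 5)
Your proposal is logically sound as a standalone proof, but it takes a genuinely different route from the paper, and the difference matters. The paper's proof never mentions $M_{23}$ or orbit counts: it observes that both geometric realizations share the same coinvariant lattice $\NS_G$ (rootless, negative definite, with trivial induced action on the discriminant group by Theorem \ref{discri-action}), and transfers directly between the two orthogonal complements --- $M'$ of signature $(1,r+2)$ inside $\NS(X_{0,p})$ and $T$ of signature $(3,r)$ inside $\Lambda_{\mathrm{K3}}$ --- by adding or deleting the factor $p^{\varepsilon 2}$ in the genus symbol. Since $p\nmid\det\NS_G$ by Lemma \ref{lem:aux}, the signature and $p$-excess discrepancies cancel modulo $8$ exactly as in the proof of Proposition \ref{prop:tame}, so the required complement exists on either side; the complex Torelli theorem, respectively the crystalline one for $p>3$ (with Theorem \ref{thm1} and the explicit models covering $p=2,3$), then produces the actions. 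You instead reduce both bullets to the combinatorial condition ``$G\subset M_{23}$ with at least $5$ orbits,'' using Mukai's Theorem \ref{thm:Mukai} as a black box on the complex side and the H\"ohn--Mason tables as the bridge on the supersingular side.

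Two concrete caveats about your route. First, the paper explicitly uses Theorem \ref{thm:equiv} in \S\ref{ss:Mukai} to give an \emph{independent} proof of Mukai's classification; since your argument invokes Theorem \ref{thm:Mukai} in both directions, it would render that application circular. Your proof is still valid for the equivalence as stated (Mukai's theorem has its own proof), but it cannot serve the role the paper assigns to this result. Second, your ``bridge'' --- that every finite $G\subset\Co_0$ with $\rank\Lambda^G\geq 5$ arising from Theorem \ref{thm:Leech} is conjugate to a subgroup of $M_{23}$ whose orbit number equals $\rank\Lambda^G$ --- is not a formality: it is essentially the union of Mukai's theorem with a case check through all relevant entries of \cite{HM290}, and you correctly flag it as the hard part but do not supply it. (Note also that the orbit/rank identity you cite only applies once $G$ is already known to sit inside $M_{24}$; it does not by itself place an abstract subgroup of $\Co_0$ there.) The paper's genus-theoretic transfer sidesteps this entirely because no identification of $G$ with a permutation group is ever needed. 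What your approach buys, if the bridge is granted, is a cleaner conceptual statement; what the paper's approach buys is independence from \cite{Mukai} and a uniform argument that needs only Lemma \ref{lem:aux} and the Conway--Sloane existence conditions.
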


\begin{proof}
With $G$ acting symplectically, we are comparing the orthogonal complements of $\NS_G$:
$M'$ of signature $(1,r+2)$ inside $\NS(X_{0,p})$ and $T$ of signature $(3,r)$ inside $\Lambda_\text{K3}$
(for some suitable $r$).
It is immediate to adjust the proof of Proposition \ref{prop:Mukai} (2)
to arbitrary rank to go back and forth between $M'$ and $T$.
Then we conclude using the complex or crystalline Torelli theorem in characteristic $p>3$
while the 
existence of the group actions in characteristics $p=2,3$ can be inferred from the explicit models
constructed in the next section (using Theorem \ref{thm1} whose proof does not rely on Theorem \ref{thm:equiv}).
\end{proof}

\section{Explicit realizations}

\label{s:models}

Explicit symplectic actions have been constructed for $U_4(3)$
on the Fermat quartic surface  in \cite{Tate} or \cite{DKeum},
and for $U_3(5)$ on the double cover of $\PP^2$ branched along the Fermat sextic curve
in \cite{DKeum}. 
For more explicit realizations of symplectic group actions in characteristics $5, 7$ and $11$, albeit not maximal, 
see \ref{ss:explicit-rk4}.

\subsection{Maximal subgroups in characteristic $2$}
\label{ss:2}

In \cite{DK}, Dolgachev and Kond\=o exhibit an explicit symplectic action
of the maximal group
$
M_{21}.2_1$
on $X_{0,2}$.
They also claim that the group is the unique maximal finite group
acting symplectically on $X_{0,2}$.
This is in contrast to our result in Theorem \ref{thm1}
which will be substantiated in the next subsection.
Here we comment on the gap in the argument of \cite{DK}.

The uniqueness claim 
relies on the general argument that for a finite group $G$ acting on $X_{0,2}$,
any $G$-invariant ample vector $h$ can be mapped to the  fundamental domain $\mathcal D$ for the action of the Weyl group
(intersected with the positive cone)
by a composition of certain reflections.
Hence $G$ is conjugate to a subgroup of $\Aut(\mathcal D)\cong M_{21}.D_{12}$
(using work of Borcherds).
The authors conclude using the fact 
that $D_{12}$ contains elements corresponding to non-symplectic automorphisms of order $3$
and to the Frobenius endomorphism of $X_0$
(which is, of course, not given by an automorphism of $X_0$).
From this they would like to deduce that, if $G$ acts symplectically, then $G\subset M_{21}.2$.

However, as Mukai points out, the reflections required for mapping $h$ into $\mathcal D$
may involve the induced action of the Frobenius endomorphism.
This facilitates that the class of Frobenius in $\Aut(\mathcal D)$ actually is conjugate to a proper automorphism in $G$
under the reflection in question
-- as seems to happen for $G=\Aut(\mathfrak S_6)$,
the only other possible maximal finite symplectic group by Theorem \ref{thm1}.

\subsection{Symplectic action by $\Aut(\mathfrak S_6)$}
\label{s:Aut(S_6)}

In this section we focus on the supersingular K3 surface $X_0=X_{0,2}$ of Artin invariant $\sigma=1$ in characteristic $2$.
In \cite[Cor.\ 5.1]{DK} it is stated that any finite  group acting symplectically on $X_0$
necessarily is a subgroup of $M_{21}.2_1$.
Here we provide a counterexample to this claim
(affirming a conjecture from \cite{KK} and, most importantly, supporting Theorem \ref{thm1}
despite the absence of a valid crystalline Torelli theorem in characteristic $2$):

\begin{proposition}
The group $\Aut(\mathfrak S_6)$ admits a symplectic action on $X_0$.
\end{proposition}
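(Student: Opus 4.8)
The plan is to circumvent the missing crystalline Torelli theorem in characteristic $2$ by producing the action geometrically. The lattice-theoretic part is already settled: by Table \ref{table2} (no.\ 194), the coinvariant lattice $\Lambda_G$ of $G=\Aut(\mathfrak S_6)$, of genus $2_5^{+3}3^{-1}5^{+1}$, is root-free, negative definite, and glues to a single positive vector $v$ with $v^2=30$ to give a lattice in the genus $\II_{1,21}\,2_\II^{-2}$ of $\NS(X_0)$; moreover $G$ acts trivially on the discriminant group. Hence an abstract isometry group of $\NS(X_0)$ realizing $G$ and fixing an ample class of degree $30$ exists, and the whole problem reduces to showing that this isometry group is induced by genuine automorphisms.

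The route I would take first is to exhibit an explicit K3 surface $Y$ over $\overline{\FF}_2$ carrying a faithful $\Aut(\mathfrak S_6)$-action together with an invariant ample class of self-intersection $30$. Here the natural starting point is the exceptional isomorphism $\mathfrak S_6\cong \mathrm{Sp}_4(2)$, so that $G$ and its index-two overgroup are already defined over $\FF_2$ and act on configurations in characteristic $2$. Once such a $Y$ is in hand, I would verify that the action is symplectic by invoking Theorem \ref{discri-action}: since $Y$ will turn out to be supersingular, it suffices to check that each generator of $G$ acts trivially on $A(\NS(Y))\cong(\ZZ/2\ZZ)^2$, which is precisely the discriminant datum already recorded in Table \ref{table2}. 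Finally I would confirm $\rank \NS(Y)=22$ and that the discriminant form matches $2_\II^{-2}$, so that $Y\cong X_{0,2}$ by uniqueness of the supersingular K3 surface of Artin invariant $1$ in characteristic $2$.

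Alternatively---and this is where the subtlety flagged in \ref{ss:2} enters---one can try to locate $G$ directly inside the known automorphism group of $X_0$. Recall from \cite{DK} that $\Aut(X_0)$ is generated by the $(-2)$-reflections together with the stabilizer $\Aut(\mathcal D)\cong M_{21}.D_{12}$ of the fundamental domain obtained by Borcherds' method. The copy of $\Aut(\mathfrak S_6)$ does not lie in the obvious symplectic subgroup $M_{21}.2_1$; instead, the extra generator extending $\mathfrak S_6$ to $\Aut(\mathfrak S_6)$ is represented by the Frobenius class in $D_{12}$ and becomes an honest \emph{symplectic} automorphism only after composing it with a suitable $(-2)$-reflection that absorbs the Frobenius. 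Making this mechanism precise is exactly the point at which the uniqueness argument of \cite{DK} breaks down.

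The main obstacle, in either route, is the passage from lattice isometries to genuine automorphisms without recourse to Torelli: the discriminant computation is routine and already available, so the real work is either to certify that the explicit model is indeed $X_{0,2}$ with a symplectic $G$-action, or to prove rigorously that the Frobenius-plus-reflection element acts as a symplectic automorphism rather than a mere isometry. Once established, the proposition furnishes the promised counterexample to the uniqueness claim of \cite{DK} and a second maximal symplectic group on $X_{0,2}$, consistent with Theorem \ref{thm1}.
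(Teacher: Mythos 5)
Your proposal is a strategy outline rather than a proof, and the two steps you yourself flag as ``the real work'' are exactly the ones the paper has to carry out; as written there is a genuine gap. You never produce the surface $Y$: the suggestion to start from $\mathfrak S_6\cong \mathrm{Sp}_4(2)$ is not developed into a model, whereas the paper works with the concrete degree-$6$ surface $\{s_1=s_2=s_3=0\}\subset\PP^5$ (Mukai's $\mathfrak A_6$ model), which in characteristic $2$ acquires $15$ nodes whose resolution carries the $\mathfrak S_6$-action. More importantly, the key idea for extending $\mathfrak S_6$ to $\Aut(\mathfrak S_6)$ is entirely absent from your proposal: the paper finds the outer involution as \emph{translation by a $2$-torsion section} of a special elliptic fibration with two $\mathrm{I}_{10}$ fibres, built out of the $(15_3)$-configuration of lines and exceptional curves; this translation switches the two families of $15$ rational curves and is automatically symplectic. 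Without identifying such a mechanism, the passage from the lattice data in Table \ref{table2} to an actual automorphism is not achieved, and your second route (Frobenius composed with reflections in the Borcherds chamber) is explicitly left as ``making this mechanism precise,'' i.e.\ unproved.

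A secondary gap is the identification $Y\cong X_{0,2}$. You say you would ``confirm $\rank\NS(Y)=22$'' and match the discriminant form, but give no method. The paper does this via the same elliptic fibration: the $5$-torsion sections force $Y$ to be an inseparable degree-$2$ base change of the modular rational elliptic surface with two $\mathrm{I}_5$ fibres, whence $\rho(Y)=22$ by Shioda--Tate, $\MW\cong\ZZ/10\ZZ$, and $\det\NS(Y)=-4$ by the determinant formula, giving Artin invariant $1$ and $Y\cong X_0$ by Ogus. Note also that symplecticity of the $\mathfrak S_6$-part follows already from the action being defined over $\FF_2$ (and of the extra involution from its being a translation), so the appeal to Theorem \ref{discri-action} you propose is not needed, though it would not be wrong. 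In short, your plan points in the right general direction (explicit geometry in place of a Torelli theorem), but the constructions that make it a proof are missing.
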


\begin{proof}
The action by $\mathfrak S_6$ was already stated in \cite{KK}.
Here we exhibit an independent argument starting from the natural degree 6 surface
\[
Y_0 = \{s_1=s_2=s_3=0\}\subset \PP^5
\]
where $s_i$ denotes the elementary symmetric polynomial of degree $i$ in the homogenous variables of $\PP^5$.
Note that this surface appears prominently in \cite{Mukai}
for the symplectic $\mathfrak A_6$ action over $\CC$
(where $Y_0$ is smooth and thus defines a K3 surface).

In characteristic $2$, $Y_0$ contains 15 isolated singular points 
at the $\mathfrak S_6$-orbit of $[1,1,0,0,0,0]$
which are in fact $A_1$ singularities.
We resolve them to obtain a K3 surface $Y$
to which the $\mathfrak S_6$ action extends from $Y_0$.
Since the group action is defined over $\FF_2$,
it is in fact automatically symplectic.

In addition, $Y_0$ contains (the strict transforms of) 15 lines 
given by
\[
\ell_{jl,mn} = 
x_1+x_i=x_j+x_l=x_m+x_n=0 \;\;\;\; \{i,j,l,m,n\}=\{2,3,4,5,6\}
\]
(which we can equally well identify as $\ell_{1i,jl}$ or $\ell_{1i,mn}$).
Together with the exceptional curves above the 15 nodes,
the lines form a $(15_3)$-configuration
where each line meets three exceptional curves,
and vice versa, each exceptional curve meets three lines.
These 30 smooth rational curves exhibit a nice $\mathfrak S_6$ symmetry
which obviously preserves the set of lines and likewise the set of exceptional curves.
To obtain the claimed action by $\Aut(\mathfrak S_6)$,
it remains to exhibit an involution on $Y$ which acts as a switch between lines and exceptional curves
(and to prove that $Y\cong X_0$).
This can be achieved with the help of a special elliptic fibration 
with two fibres of Kodaira type I$_{10}$.
By the standard theory of elliptic K3 surfaces, the fibration is induced by either fibre,
so we continue to exhibit two disjoint 10-cycles on the graph of lines and exceptional curves.
To this end, we introduce the notation
$E_{ij}$ for the exceptional curve in $Y$ above the node with non-zero coordinates $x_i, x_j$,
so that $E_{ij}$ meets the three lines  
which can be written with a double index $ij$.
Then two disjoint 10-cycles can be given by
\begin{eqnarray*}
F_1 & = & \ell_{12,34}+E_{12}+\ell_{12,36}+E_{45}+\ell_{13,45}+E_{13}+\ell_{13,25}+E_{25}+\ell_{16,34}+E_{34}\\
F_2 & = & \ell_{14,56} + E_{14} + \ell_{14,26} + E_{35} + \ell_{16,24} + E_{24} + \ell_{15,36} + E_{15} + \ell_{15,23} + E_{23}  
\end{eqnarray*}
Note that the 10 remaining lines and exceptional curves form sections of the fibration.
In fact, picking one curve as zero section, say the line $\ell_{12,35}$,
the remaining four lines give $5$-torsion sections by inspection of the height being zero
in the theory of Mordell--Weil lattices \cite{MWL}.
It then follows from the general theory of modular surfaces
that $Y$ is a base change of the elliptic modular surface with a point of order $5$.
By \cite{Kubert}, this is a rational elliptic surface with two fibres of type I$_5$.
Hence the base change has to ramify at these two fibres,
so it is inseparable of degree $2$,
and there are two more reducible fibres of type I$_2$
(comprising some twisted cubics on $Y_0$ defined over $\FF_4$,
but we omit them for space reasons).
In particular, $\rho(Y)=22$ by the Shioda--Tate formula, so $Y$ is supersingular.
Now consider the remaining 5 exceptional curves
which also serve as sections.
Again, by the Shioda--Tate formula, they have finite order,
so we find that $\MW\cong\ZZ/10\ZZ$.
Then the determinant formula \cite[Cor.\ 6.39]{SS} shows that $\NS(Y)$ has
determinant $-4$, i.e.\ $Y$ has Artin invariant $\sigma=1$, 
in agreement with \cite{ES} and \cite{KK}.

Since $Y\cong X_0$ by \cite{Ogus83}, we have exhibited a symplectic $\mathfrak S_6$-action on $X_0$.
Consider the automorphism given by translation by the 2-torsion section $E_{46}$.
(This is the special case of a peculiar torsion section which is not disjoint from the zero section $\ell_{12,35}$;
this may only happen if the characteristic divides the group order.)
By construction, this involution switches lines and exceptional curves while preserving the elliptic fibration structure.
It thus exhibits exactly the outer automorphism of $\mathfrak S_6$ that we need.
\end{proof}

\begin{conclusion}
Both maximal possible finite symplectic groups in characteristic $2$, $M_{21}.2_1$ and $\Aut(\mathfrak S_6)$,
as classified in the course of this paper,
are  supported by explicit geometric realizations on $X_{0,2}$.
\end{conclusion}

\subsection{Maximal subgroups in characteristic $3$}
\label{ss:3}

We have seen in Proposition \ref{prop:max} that next to $U_4(3)$ 
there are two more maximal groups $G$ admitting a 
symplectic action on $X_0 = X_{0,3}$,
the supersingular K3 surface of Artin invariant $\sigma_0=1$ in characteristic $3$,
namely
\[
G = 2^4:\mathfrak S_{3,3} \;\;\; \text{ and } \;\;\; G = 2^2.\mathfrak A_{4,4}.
\]
For lack of a  crystalline Torelli theorem in characteristic $3$ (cf.\ Remark \ref{rem:crys}),
we give an explicit geometric realization for both groups.

\subsubsection{$\boldsymbol{G = 2^4:\mathfrak S_{3,3}}$}
\label{sss:121}

This is a degree 3 extension of the group $G_0=H_{192} = 2^4:D_{12}$ from Theorem \ref{thm:Mukai}.
We shall realize it in characteristic $3$ starting from Mukai's realization of $G_0$ over the complex numbers 
in \cite{Mukai}
via the degree $8$ K3 model given by
\begin{eqnarray}
\label{eq:X3}
X = \left\{
\begin{matrix}
x_1^2+x_3^2+x_5^2 = x_2^2 + x_4^2 + x_6^2\\
~\\
x_1^2+x_4^2 = x_2^2 + x_5^2 = x_3^2 + x_6^2
\end{matrix}
\right\}
\subset\PP^5.
\end{eqnarray}
While this is smooth over $\CC$,
it attains 8 nodes in characteristic $3$ at the points 
\begin{eqnarray}
\label{eq:nodes}
[0,1,0,\pm 1,0,\pm 1], \;\; [1,0,\pm 1, 0,\pm 1, 0].
\end{eqnarray}
In addition, there are 16 lines given by
\[
\{  [\mu, \lambda,\pm \mu, \pm\lambda\pm \mu, \pm\lambda]; [\mu,\lambda]\in\PP^1\}.
\]
One directly verifies that each node is met by exactly 4 lines,
and two nodes are connected by a (unique) line 
if and only if they take the two different shapes in \eqref{eq:nodes}.
On the minimal resolution $\tilde X$, this means that the exceptional divisors and the strict transforms of the lines
form the standard double Kummer configuration of 24 smooth rational curves on a Kummer surface of product type Km$(E\times E')$,
as introduced in \cite[Fig.\ 1]{SI}.
In particular, there are two isotrivial elliptic fibrations
\[
\pi, \; \pi': \;\; \tilde X \to \PP^1
\]
with four fibres of Kodaira type I$_0^*$
(given by lines and the exceptional curves above  four nodes of fixed shape from \eqref{eq:nodes})
and full two-torsion in the Mordell--Weil group (given by the other exceptional curves)
such that the general fibre is isomorphic to $E$ resp.\ $E'$.
(Here the fibrations $\pi, \pi'$ are induced by the projections to each factor of $E\times E'$.)
Together, these 24 smooth rational curves generate a lattice $L\subset\NS(\tilde X)$ of rank $18$ and determinant $-16$,
an index 4 overlattice of $U\oplus D_4^4$, generated by the torsion sections.

\begin{claim}
\label{claim:ss}
$\tilde X$ is supersingular; in particular,
$\tilde X \cong X_0$.
\end{claim}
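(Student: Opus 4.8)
The surface $\tilde X$ is the Kummer surface $\mathrm{Km}(E\times E')$ of the product of the two elliptic curves $E,E'$ that arise as the general fibres of the isotrivial pencils $\pi$ and $\pi'$. The plan is to show that both $E$ and $E'$ are supersingular: this will force $\tilde X$ to be supersingular of Artin invariant $1$, after which the uniqueness of $X_0$ in characteristic $3$ finishes the proof.

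First I would pin down $E$ (and, by the evident symmetry of \eqref{eq:X3}, also $E'$) explicitly. Since $\pi$ carries full two-torsion in its Mordell--Weil group, the general fibre has the form $E\colon y^2=(x-e_1)(x-e_2)(x-e_3)$, and the three roots $e_i$ --- equivalently the Legendre parameter, hence $j(E)$ --- can be extracted from the four $\I_0^*$ fibres and the two-torsion sections among the $24$ rational curves already exhibited (or by restricting \eqref{eq:X3} to a general member of the pencil inducing $\pi$). The point of this computation is to verify that $j(E)=0$; in characteristic $3$ this is the \emph{unique} supersingular $j$-invariant, so $E$, and likewise $E'$, is supersingular. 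As a conceptual corroboration, the order-$3$ part of the extra symmetry in $G=2^4:\mathfrak S_{3,3}$ (the degree $3$ extension of $H_{192}=2^4:D_{12}$, obtained by enlarging a $\ZZ/2$ factor of $D_{12}\cong\mathfrak S_3\times\ZZ/2$ to a full $\mathfrak S_3$) descends from an order-$3$ automorphism of $E$ fixing the origin, and such an automorphism exists only for the supersingular curve with $j=0$ in characteristic $3$.

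With $E,E'$ supersingular, $\mathrm{Hom}(E,E')$ has rank $4$, so $\rho(E\times E')=2+4=6$, and therefore $\rho(\tilde X)=\rho(E\times E')+16=22$; thus $\tilde X$ is supersingular. Moreover $E\times E'$ is then a superspecial abelian surface, whence its Kummer surface has Artin invariant $\sigma_0=1$. Concretely, the determinant $\det\NS(\tilde X)=-9$ can alternatively be confirmed by saturating the rank $18$ lattice $L$ together with the rank $4$ lattice of correspondence classes coming from $\mathrm{Hom}(E,E')$. By the uniqueness of the supersingular K3 surface of Artin invariant $1$ in characteristic $3$, we conclude $\tilde X\cong X_0$.

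I expect the main obstacle to be the explicit identification of $E$ and $E'$ and the verification that $j=0$: once supersingularity of the two elliptic factors is in hand, the rank count and the appeal to superspeciality are formal. An alternative route, bypassing the Weierstrass computation altogether, would be a direct point count over $\FF_9$ combined with the Tate conjecture for K3 surfaces to read off that all Frobenius eigenvalues on $H^2$ are $3$ times roots of unity; but the Kummer argument above seems more transparent given the configuration of $24$ curves already established.
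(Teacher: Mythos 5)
Your plan is viable and reaches the right conclusion, but it is a genuinely different route from the paper's. The paper never touches the elliptic factors in characteristic $3$ directly: it passes to the complex surface $X_\CC$ cut out by \eqref{eq:X3}, which is singular with transcendental lattice $\mathrm{diag}(4,12)$ by \cite{Hashi}, identifies it via Shioda--Mitani \cite{SM} as $\mathrm{Km}(E_\CC\times E_\CC)$ for the CM curve $E_\CC$ with $j=2^43^35^3$, and then reduces modulo $3$, where this CM curve acquires $j=0$ and is therefore supersingular. What that detour buys is precisely the avoidance of the Weierstrass computation you defer: over $\CC$ the transcendental lattice is already tabulated and Shioda--Mitani hands you the elliptic factors for free. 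Your route, staying in characteristic $3$, is confirmed a posteriori by the paper itself, since the explicit model \eqref{eq:Km} written down right after the Claim has generic fibre $y^2=(x^3-x)/(t^3-t)$, a twist of the curve with $j=1728\equiv 0$; and your rank count $\rho(\tilde X)=\rho(E\times E')+16=22$ together with superspeciality and the uniqueness of $X_0$ is standard.

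Three caveats. First, the step you explicitly postpone --- verifying $j(E)=j(E')=0$ --- is the entire content of the Claim on your route; note also that the positions of the four $\I_0^*$ fibres of $\pi$ encode the Legendre parameter of the \emph{other} factor $E'$ (they are the branch points of $E'\to E'/\pm$), so extracting $j(E)$ really does require normalizing a Weierstrass form of the generic fibre of $\pi$ from \eqref{eq:X3}, not just reading off the incidence combinatorics of the $24$ curves. Second, your ``conceptual corroboration'' via the order-$3$ symmetry is circular at this point of the paper: the extension of $H_{192}=2^4:D_{12}$ to $2^4:\mathfrak S_{3,3}$, i.e.\ the wild automorphism of the generic fibre, is constructed only after and as a consequence of the Claim. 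Third, you take $\tilde X\cong\mathrm{Km}(E\times E')$ in characteristic $3$ as an input from the double Kummer pencil, whereas the paper's proof of the Claim derives this isomorphism from the complex picture rather than assuming the positive-characteristic version of that implication.
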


\begin{proof}
We argue with the complex K3 surface $X_\CC$ 
which has Picard number $\rho(X_\CC)=20$ and transcendental lattice $T(X_\CC) \cong $ diag$(4,12)$
by \cite{Hashi}.
It follows from \cite[\S 3 (I) \& (5.8)]{SM} that $X_\CC \cong$ Km$(E_\CC \times E_\CC)$
where $E$ is the elliptic curve with complex multiplication by $\ZZ[\sqrt{-3}]$ and j-invariant $2^43^35^3$.
Since $E$ has potentially good reduction everywhere,
yielding the elliptic curve $E_0$ with j-invariant $0$ in characteristic $3$,
we infer that $\tilde X \cong$ Km$(E_0\times E_0)$.
This proves both claims.
\end{proof}

The above argument also shows that the generic fibre of $\pi$ (and of $\pi'$)
admits a wild automorphism of order 3, say $\varphi$, which is automatically symplectic.
Explicitly, this can be described by the affine model
\begin{eqnarray}
\label{eq:Km}
(t^3-t) y^2 = (x^3-x).
\end{eqnarray}
This makes the two fibrations to $\PP^1_t, \PP^1_x$ visible,
with singular fibres at the $\FF_3$-points of $\PP^1$ and
2-torsion sections 
$$
P_0 = (0,0), \;\; P_1=(1,0), \;\; P_{-1} =(-1,0)
$$ 
for $\pi$ mapping $\tilde X$ to $\PP^1_t$,
and analogously for $\pi'$ over $\PP^1_x$. 

Each base curves admits the symmetric group $\mathfrak S_4$ as automorphisms,
but the resulting action on $H^{2,0}(\tilde X)$ involves the fourth roots of unity.
However, the subgroup $\mathfrak A_{4,4}$ admits a symplectic action on $\tilde X$
which can be extended  by the switch of fibrations 
$$ 
\varphi_0: \;(t,x,y) \mapsto (x,t,-1/y)
$$
to give a group $G$ of size $576$ acting symplectically.

\begin{proposition}
\label{prop:121}
$G\cong 2^4:\mathfrak S_{3,3}$.
\end{proposition}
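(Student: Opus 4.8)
The plan is to read off the structure of $G$ from its action on the sixteen exceptional curves of $\tilde X=\mathrm{Km}(E_0\times E_0)$, which are canonically indexed by the two-torsion $A[2]$ of $A=E_0\times E_0$. Every generator of $G$ is induced by an affine automorphism of $A$: the two-torsion translations, the rotations in $\Aut(E_0,0)$ acting on each factor, and the factor switch $\varphi_0$. Hence the induced permutation action realizes $G$ as a subgroup of the affine group $\mathrm{AGL}(A[2])=\mathrm{AGL}_4(\FF_2)$, and one checks this action is faithful. First I would isolate the subgroup $N$ generated by the translations by $A[2]$: under the two fibrations $\pi,\pi'$ these are precisely the two Klein four-groups $V_4\subset\mathrm{PGL}_2(\FF_3)$ of base translations on $\PP^1_t$ and $\PP^1_x$, so that $N=V_4\times V_4\cong A[2]$ is elementary abelian of order $16$, acts simply transitively on the nodes, and is normalized by the linear part. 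Thus $N\cong 2^4$ is normal in $G$.

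Next I would determine the quotient $\bar G=G/N$, i.e.\ the linear part acting on $A[2]=E_0[2]\oplus E_0[2]$. Here, recalling that the supersingular curve $E_0$ in characteristic $3$ has $\Aut(E_0,0)\cong\ZZ/3\rtimes\ZZ/4$ of order $12$, the quotient $\Aut(E_0,0)/\{\pm 1\}\cong\mathfrak S_3\cong\mathrm{GL}_2(\FF_2)$ acts faithfully on each summand $E_0[2]\cong\FF_2^2$ by permuting its three nonzero vectors, the sign character of $\mathfrak S_3$ being the reduction modulo $\{\pm1\}$ of the character $\chi\colon\Aut(E_0,0)\to\mu_4$ recording the action on the regular $1$-form. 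The symplectic condition $\chi(\phi_1)\chi(\phi_2)=1$ therefore restricts the block-diagonal group $\mathfrak S_3\times\mathfrak S_3$ to its index-two subgroup $D=\{(\bar\phi_1,\bar\phi_2):\mathrm{sgn}(\bar\phi_1)=\mathrm{sgn}(\bar\phi_2)\}$ (the source of the passage from $\mathfrak S_4$ to $\mathfrak A_{4,4}$), while $\varphi_0$ provides the remaining generator acting as the swap of the two summands. A short computation shows $\bar G=D\rtimes\langle\varphi_0\rangle\cong\mathfrak S_3\times\mathfrak S_3=\mathfrak S_{3,3}$ of order $36$, so $|G|=16\cdot 36=576$. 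Since the stabilizer in $G$ of the node $0\in A[2]$ maps isomorphically onto $\bar G$, it furnishes a complement to $N$ and the extension splits, giving $G=N\rtimes\bar G\cong 2^4:\mathfrak S_{3,3}$.

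The hard part is pinning down the precise isomorphism type, because $\mathfrak S_4\times\mathfrak S_4$ has the same order $576$ and likewise contains a normal $2^4$ with quotient $\mathfrak S_3\times\mathfrak S_3$. The decisive difference lies in how $\bar G$ acts on $N$: in $\mathfrak S_4\times\mathfrak S_4$ each $\mathfrak S_3$-factor acts on its own block $\FF_2^2$ and the two blocks are never interchanged (the product action), whereas here the switch $\varphi_0$ genuinely swaps the two blocks $E_0[2]$, so the $\mathfrak S_{3,3}$-action on $2^4$ is imprimitive. This is exactly what excludes elements of order $12$ from $G$, which abound in $\mathfrak S_4\times\mathfrak S_4$; I would verify the absence of order-$12$ elements (consistent with $G\subset M_{23}$, whose element orders in Table \ref{tab:23} exclude $12$) to conclude that $G$ is the group no.\ 121 of Table \ref{tab}, namely $2^4:\mathfrak S_{3,3}$. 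If preferred, this final identification can be confirmed mechanically with GAP \cite{GAP}.
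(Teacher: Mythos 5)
Your proposal is correct, and its skeleton coincides with the paper's: isolate the normal subgroup $N\cong 2^4$ of two-torsion translations and identify the quotient with $\mathfrak S_{3,3}$. Where you genuinely differ is in how the quotient is handled. The paper stays inside the explicit model \eqref{eq:Km}, writes down two commuting copies of $\mathfrak S_3$ as words in $\varphi_0,\dots,\varphi_3$, and verifies by multiplying out that they form a complement to $N$; you instead transport everything to the affine group of $A[2]$ for $A=E_0\times E_0$, cut out the linear part by the symplectic condition $\chi(\phi_1)\chi(\phi_2)=1$ on $(\Aut(E_0,0)\times\Aut(E_0,0))\rtimes 2$, and recognise the resulting index-two subgroup $D\rtimes\langle\text{swap}\rangle$ of the wreath product as $\mathfrak S_{3,3}$. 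This is more structural, and it buys something the paper leaves implicit: the observation that ``$2^4:\mathfrak S_{3,3}$'' is only determined once the action on $2^4$ is specified (the product action would give $\mathfrak S_{4,4}$, which has order-$12$ elements and is excluded); the paper only addresses this ambiguity in a separate remark about an error in \cite{DKeum}. Three points you assert rather than prove, all of which do go through but deserve a line each: (i) faithfulness of the action on $A[2]$ holds because the kernel of $(\Aut(E_0,0)\times\Aut(E_0,0))/\langle(-1,-1)\rangle\to\mathrm{GL}(A[2])$ is generated by the class of $(1,-1)$, which fails the symplectic condition; (ii) the origin-stabilizer is a complement only because $N$ is the \emph{full} translation group of order $16$, so that any $(v,M)\in G$ can be corrected to $(0,M)\in G$ --- you do establish $N\cong V_4\times V_4$, so this is fine, but the implication should be made explicit; (iii) the absence of order-$12$ elements rules out $\mathfrak S_{4,4}$ but not a priori every extension $2^4.\mathfrak S_{3,3}$, so the clean endpoint is that $2^4\rtimes(D\rtimes\langle s\rangle)$ with its natural action on $E_0[2]\oplus E_0[2]$ is the group no.\ 121 of \cite{HM290}, a finite check (e.g.\ with \cite{GAP}) on which the paper is no more explicit than you are.
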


\begin{proof}
To prove the proposition, we start by considering the normal subgroup $N\lhd G_0$
which is generated by sign changes in the homogeneous coordinates of \eqref{eq:X3}.
From the action on the nodes, one infers that $N$ corresponds to compositions of the translations 
by 2-torsion sections of the two fibrations $\pi, \pi'$.
In terms of \eqref{eq:Km},
 these translations correspond to the involutions of either base curve $\PP^1_t, \PP^1_x$ 
which preserve no point over $\FF_3$, such as $t\mapsto -1/t$
(each forming the Klein group $V_4$).

The subgroup $D_{12}$ of automorphisms inherited from Mukai's model can be 
generated by the order 6 automorphism
\[
\varphi_1: \; (t,x,y) \mapsto (x+1,t+1,-1/y)
\]
(such that $\varphi_0=\varphi_1^3$) and the involution 
\[
\varphi_2: \; (t,x,y) \mapsto (-t,-x,-y).
\]
It follows that $G$ can be obtained from $G_0$ (acting on both $X_\CC$ and $\tilde X$) 
by adding the wild automorphism 
$$
\varphi = \varphi_3: \; (t,x,y) \mapsto (t,x+1,y)
$$
of the generic fibre of $\pi$
to which we already alluded above.

%
%
%

\begin{claim}
$N\lhd G$.
\end{claim}

\begin{proof}
Since $N\lhd G_0$, it suffices to prove
that $\varphi N=N\varphi$
 for the remaining generator $\varphi$ of $G$. 
 Since $\varphi$ preserves the fibration $\pi$ over $\PP^1_t$ fibrewise,
i.e.\ it acts trivially on the corresponding nodes,
the claim already follows from the standard fact that $V_4\lhd \mathfrak S_4$
for the Klein 4-group $V_4$ generated by the translations of the two-torsion sections of $\pi$.
\end{proof}

To complete the proof of Proposition \ref{prop:121}, it remains to exhibit a subgroup 
$\mathfrak S_{3,3}\subset\Aut^{\text{symp}}(\tilde X)$
such that
\begin{eqnarray}
\label{eq:S33}
G/N  \cong \mathfrak S_{3,3} \cong \mathfrak S_3\times\mathfrak S_3.
\end{eqnarray}
For this purpose, we consider automorphisms
which act linearly on $x,t$ in the affine model \eqref{eq:Km}
 (so that they preserve the zero sections of both fibrations $\pi$ and $\pi'$,
as suggested by the quotient $G/N$).
In detail, we augment the subgroup 
$\mathfrak S_3\cong \langle \varphi_1^2, \varphi_2\circ\varphi_0\rangle\subset D_{12}$ 
by the commuting copy given by
\[
\mathfrak S_3 \cong\langle \varphi_1\circ\varphi_3, \varphi_0\rangle
\]
and verify explicitly by computing all products that  \eqref{eq:S33} holds true.
This completes the proof of Proposition \ref{prop:121}.
\end{proof}

\subsubsection{$\boldsymbol{G=2^2.\mathfrak A_{4,4}}$}
\label{sss:112}

The group $G=2^2.\mathfrak A_{4,4}$ contains $G_0 = H_{192}$ as an index 6 subgroup,
so we start again with Mukai's model $\tilde X$ arising from \eqref{eq:X3}.
We will argue with the fixed lattice $\NS(\tilde X)^{G_0}$
which clearly contains the classes of
\begin{eqnarray}
\label{eq:inv_div}
\begin{cases}
D_1 =  \frac 12\sum(\text{exceptional curves}),\\
\\
 D_2 = \frac 12 \sum(\text{strict transforms of the lines}).
 \end{cases}
 \end{eqnarray}
Together, they generate a rank two lattice $L_0\subset \NS(\tilde X)^{G_0}$ which in fact equals 
$$
L_0 = \begin{pmatrix}
-4 & 8\\ 8 & -8
\end{pmatrix}
\cong\langle 4,-8 \rangle
\cong
\NS(\tilde X)^{G'} 
$$
for $G' =  2^4:\mathfrak S_{3,3}$ from the previous section (in agreement with \cite[Table 1]{HM290}).
For $\NS(\tilde X)^{G_0}$, we need a further invariant generator which can be derived as follows.

Start with a certain conic $Q\subset X$;
on $\tilde X$, this induces a section for both fibrations $\pi, \pi'$.
In terms of $\pi$, with Weierstrass form \eqref{eq:Km}, 
we can write $Q = (t,1)$ and compute that
the $G_0$-orbit of $Q$ comprises the eight sections
$\pm Q + R$,
where $R$ runs through the 2-torsion sections (including the zero section $O$)
and the signs indicate, here and below, opposite sections with respect to the group structure.
By \cite{MWL}, their orbit sum is zero modulo $\mbox{Triv}(\pi)$,
the lattice generated by fibre components and zero section;
 in particular, the orbit sum lies in $L_0$ for symmetry reasons,
 so $Q$ does not contribute the missing generator for $\NS(\tilde X)^{G_0}$.

\begin{lemma}
\label{claim:MWL}
A $\ZZ$-basis of $\MWL(\pi)$
is given by the four sections $Q$,
$$
\varphi^* Q = (t-1,1), \;\; \iota^* Q=(-t,i)  \;\; \text{ and } \;\; \varphi^* \iota^* Q = (-t-1,i) \;\;\; (i^2=-1)
$$
where $\iota: (x,y) \mapsto (-x,iy)$ denotes an order 4 automorphism of the generic fibre $E$.
\end{lemma}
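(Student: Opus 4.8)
The plan is to show that the four given sections span a sublattice of $\MWL(\pi)$ whose height Gram matrix has exactly the determinant of the full Mordell--Weil lattice, so that the index is one and they form a basis.

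First I would determine $\MWL(\pi)$ abstractly. The fibration $\pi$ carries the four fibres of type $\mathrm{I}_0^*$ at $t=0,\pm1,\infty$ and the full two-torsion $\{O,P_0,P_1,P_{-1}\}\cong(\ZZ/2\ZZ)^2$. Since $\tilde X$ is supersingular with $\rho=22$, the Shioda--Tate formula gives $\rank\MWL(\pi)=22-2-4\cdot 4=4$; combining $\det\NS(\tilde X)=-9$ with $\det(D_4^{\oplus 4})=4^4$ and $|\MW_{\mathrm{tor}}|=4$, the discriminant formula \cite[Cor.\ 6.39]{SS} yields $\det\MWL(\pi)=\tfrac{9}{16}$. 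Hence the four sections form a basis as soon as they are independent with height matrix of determinant $\tfrac{9}{16}$.

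Next I would compute the diagonal. Both $\varphi$ and $\iota$ fix $t$, hence act fibrewise as isometries of $\MWL(\pi)$ that fix $O$ and preserve each $\mathrm{I}_0^*$ fibre; thus all four diagonal entries agree and it suffices to treat $Q$. Using the K3 height formula $\langle Q,Q\rangle=4+2(Q\cdot O)-\sum_v\mathrm{contr}_v(Q)$, one checks that $(Q\cdot O)=0$ (the section stays finite at every fibre, as the change of model at $t=\infty$ shows) and that $Q$ meets the identity component at $t=\infty$ but a non-identity simple component at each of $t=0,\pm1$ (where it runs into the singular point of the Weierstrass model), so that $\sum_v\mathrm{contr}_v(Q)=1+1+1+0=3$ and $\langle Q,Q\rangle=1$. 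For the off-diagonal entries I would exploit that $\varphi,\iota\in\Aut(E_0,O)$ with $\varphi$ of order $3$ and $\iota^2=[-1]$ of order $4$, generating the dicyclic group of order $12$ (as $E_0$ has $j=0$ in characteristic $3$). Since $[-1]$ acts as negation, $\langle Q,\iota^*Q\rangle=\langle\iota^*Q,(\iota^2)^*Q\rangle=-\langle Q,\iota^*Q\rangle$ forces $\langle Q,\iota^*Q\rangle=0$, and the same argument makes the two pairs $\{Q,\varphi^*Q\}$ and $\{\iota^*Q,\varphi^*\iota^*Q\}$ mutually orthogonal; within a pair, $1+\varphi+\varphi^2=0$ on $\MWL(\pi)\otimes\QQ$ (the order-$3$ isometry $\varphi$ having no nonzero invariant vector) gives $\langle Q,\varphi^*Q\rangle=-\tfrac12\langle Q,Q\rangle=-\tfrac12$. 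Thus the Gram matrix is
\[
H=\begin{pmatrix}1&-\tfrac12&0&0\\-\tfrac12&1&0&0\\0&0&1&-\tfrac12\\0&0&-\tfrac12&1\end{pmatrix},
\]
block-diagonal with two scaled $A_2$-blocks, of determinant $(3/4)^2=\tfrac{9}{16}$. As $H$ is positive definite (so the sections are independent) and $\det H=\det\MWL(\pi)$, the four sections form a $\ZZ$-basis.

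The main obstacle is the honest determination of the diagonal contribution: checking at each $\mathrm{I}_0^*$ fibre which simple component $Q$ meets and that $Q$ does not meet $O$ at $t=\infty$, together with verifying that $\varphi$ acts without invariants so that $1+\varphi+\varphi^2=0$; these two geometric inputs underpin the clean values $\langle Q,Q\rangle=1$ and $\langle Q,\varphi^*Q\rangle=-\tfrac12$. A conceptual shortcut, which I would also mention, is the identification of $\MWL(\pi)$ with $\mathrm{End}(E_0)$ equipped with $\langle x,y\rangle=\tfrac12\mathrm{Tr}(x\bar y)$, under which $Q$ corresponds to a unit and the four sections to $1,\varphi,\iota,\varphi\iota$; the entries of $H$ are then immediate from the norms and traces of the dicyclic elements.
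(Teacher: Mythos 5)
Your proposal is correct, and its overall skeleton coincides with the paper's proof: compute the height Gram matrix of the four sections, observe that it is two orthogonal copies of a rescaled $A_2$ of total determinant $9/16$, and conclude via the determinant formula \cite[Cor.\ 6.39]{SS} that the index in $\MWL(\pi)$ is one. The diagonal entries are obtained in both arguments from the same geometric input (which $\I_0^*$-components the sections meet at $t=0,\pm1$ and that they pass through the identity component at $\infty$ without meeting $O$). Where you genuinely diverge is in the off-diagonal entries: the paper computes them directly from intersection data (the pairs $Q,\varphi^*Q$ and $\iota^*Q,\varphi^*\iota^*Q$ meet transversally on the fibre at $t=\infty$, giving $-1/2$, all other pairings vanishing), whereas you derive them algebraically from the relations $1+\varphi+\varphi^2=0$ and $\iota^2=[-1]$ in $\mathrm{End}(E)$ — which is arguably more robust, since it avoids tracking component data for all six pairs. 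Two small points to tighten: the identity $1+\varphi+\varphi^2=0$ is best justified by noting that an order-$3$ automorphism fixing the origin satisfies $\varphi^2+\varphi+1=0$ in the zero-divisor-free ring $\mathrm{End}(E)$ (the ``no nonzero invariant vector'' claim is a consequence, not a premise); and the orthogonality of the two blocks is not literally ``the same argument'' but follows because $\iota\varphi$ and $\iota\varphi^{-1}$ are again trace-zero, norm-one elements squaring to $-1$ — exactly the content of your closing remark identifying $\MWL(\pi)$ with $\mathrm{End}(E_0)$ under the trace form, which is a nice conceptual addition absent from the paper.
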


\begin{proof}
Each section meets the non-identity fibre components at $t=0,1,-1$ as indicated by the $x$-coordinate,
evaluated at the respective $t$.
In terms of the height pairing of the Mordell--Weil lattice  \cite{MWL},
it follows that the sections have height $1$ and that
\[
\langle Q,\varphi^* Q\rangle = \langle \iota^*Q,\varphi^*\iota^*Q\rangle = -1/2
\]
(because for both pairs, the two sections meet transversally on the fibre at $t=\infty$),
the other pairings being zero.
Hence we get an inclusion of the sublattice
\[
\MWL(\pi) \supseteq \langle Q,\varphi^* Q\rangle \oplus \langle \iota^* Q, \varphi^*\iota^* Q\rangle \cong A_2[1/2]^2.
\]
Then the statement about the $\ZZ$-basis follows from the determinant formula \cite[Cor.\ 6.39]{SS}
by  comparing the determinants $-16$ of $L$ and $9/16$ of the above
sublattice of $\MWL(\pi)$ to $\det\NS(\tilde X)=-9$.
\end{proof}

\begin{remark}
It follows that $\MW(\pi)$ is generated by the above 4 sections and the 2-torsion sections,
and analogously for $\MW(\pi')$.
\end{remark}

For later use, we compute the missing generator of the rank 3 fixed lattice $\NS(\tilde X)^{G_0}$.
To this end, we consider the action of  the  generators $\varphi_0, \varphi_1^2$ and $\varphi_2$ of $D_{12}$
on $\MW(\pi)$ to find that the divisor
\[
D_3 = \frac 12 (\varphi^* Q + (-(\varphi^*)^2 Q) + (\text{translates by torsion sections}))
\]
is invariant under $G_0$.

\begin{claim}
\label{claim:inv_G_)}
$\NS(\tilde X)^{G_0} = \left\langle D_1, D_2, D_3\right\rangle$.
\end{claim}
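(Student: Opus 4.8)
The plan is to show that the three invariant classes $D_1,D_2,D_3$ already exhaust the invariant lattice. The key structural input is that $\NS(\tilde X)^{G_0}$ has rank exactly $3$: since $G_0=H_{192}$ is one of Mukai's maximal groups and acts on the $24$ letters with $5$ orbits, we have $\rank\Lambda^{G_0}=5$, so by Theorem \ref{thm:Leech} the coinvariant lattice $\NS(\tilde X)_{G_0}\cong\Lambda_{G_0}$ has rank $19$ and hence $\rank\NS(\tilde X)^{G_0}=22-19=3$. As $D_1,D_2,D_3$ are $G_0$-invariant by construction, we obtain an inclusion $\langle D_1,D_2,D_3\rangle\subseteq\NS(\tilde X)^{G_0}$ of finite index, and the whole task reduces to checking that the three classes are linearly independent and that the sublattice they span is saturated in $\NS(\tilde X)$, so that the index is $1$.

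For invariance and independence I would first confirm that $D_3$ is fixed by $G_0$: invariance under the translation subgroup $N\cong 2^4$ is automatic from the summation over torsion translates in its definition, while invariance under a generating set $\varphi_0,\varphi_1^2,\varphi_2$ of $G_0/N\cong D_{12}$ is read off from their induced action on $\MW(\pi)$, which permutes the sections $\varphi^*Q,(\varphi^*)^2Q$ and the $2$-torsion sections among themselves. I would then assemble the $3\times 3$ Gram matrix: its $D_1,D_2$ block is the known $\begin{pmatrix}-4&8\\8&-8\end{pmatrix}$, while the entries $D_1\cdot D_3$, $D_2\cdot D_3$ and $D_3^2$ are evaluated through the Mordell--Weil height pairing of Lemma \ref{claim:MWL} together with the explicit intersections of the relevant sections with the zero section and with the non-identity components of the reducible fibres. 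A nonzero determinant confirms rank $3$.

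To upgrade the finite-index inclusion to an equality, I would compute $\det\langle D_1,D_2,D_3\rangle$ from this Gram matrix and compare it with $\det\NS(\tilde X)^{G_0}$. The latter is pinned down by the discriminant-form gluing $\NS(\tilde X)^{G_0}\perp\Lambda_{G_0}$ inside the $3$-elementary lattice $\NS(\tilde X)$ of determinant $-9$: away from $3$ the discriminant of $\NS(\tilde X)^{G_0}$ is anti-isometric to that of the coinvariant lattice $\Lambda_{G_0}\cong\Lambda_{H_{192}}$ taken from \cite{HM290}, while the $3$-part is controlled by the Artin invariant $\sigma_0=1$. Matching determinants forces the index to be $1$, which gives the claim. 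Alternatively one may argue saturation directly: any hypothetical primitive vector of $\NS(\tilde X)$ lying in $\langle D_1,D_2,D_3\rangle\otimes\QQ$ but not in $\langle D_1,D_2,D_3\rangle$ would have to have integral intersection with all $24$ configuration curves and with the generating sections of $\MW(\pi)$, and these congruences are incompatible.

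The main obstacle is the correct evaluation of the three Gram entries involving $D_3$. Because $D_3$ mixes the sections $\varphi^*Q$ and $(\varphi^*)^2Q$ with their torsion translates, one must track carefully how these sections meet the zero section, the reducible fibres, and one another; the delicate point is the wild characteristic $3$, where, as already flagged in the $\Aut(\mathfrak S_6)$ discussion, torsion sections need not be disjoint from the zero section, so the usual disjointness shortcuts for height computations are unavailable. A secondary, more bookkeeping difficulty is determining $\det\Lambda_{G_0}$ and the precise gluing so as to read off $\det\NS(\tilde X)^{G_0}$ exactly.
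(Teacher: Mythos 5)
Your proposal takes essentially the same route as the paper: the paper likewise observes that $D_1,D_2,D_3$ are $G_0$-invariant, computes their Gram matrix, and matches its determinant $-384$ against that of $\Lambda^{G_0}$ (up to sign) from \cite[Table 1]{HM290} to conclude that the index-one inclusion is an equality. Your additional remarks on the rank count via Theorem \ref{thm:Leech}, the discriminant gluing at $p=3$, and the height-pairing bookkeeping merely make explicit what the paper leaves implicit.
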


\begin{proof}
Since each given divisor is $G_0$-invariant,
it suffices to compute their Gram matrix
to verify that they do not only generate a full rank sublattice of $\NS(\tilde X)^{G_0}$,
but, by inspection of its determinant $-384$ which agrees with that of $\Lambda^G$ up to sign by \cite[Table 1]{HM290},
the whole invariant lattice
$\NS(\tilde X)^{G_0}$.
\end{proof}

The main idea to realize a symplectic action of $G$ on $\tilde X$ 
is to combine the action of $G_0$ with that of another subgroup of $G$, namely  $\tilde G = T_{48}$ in the notation of 
\cite{Mukai}.
Over $\CC$, this group acts on the smooth double sextic given by
\[
Y: \;\;\; w^2 = xy(x^4+y^4)+z^6
\]
where the group action essentially is given by GL$(2,\FF_3)$ acting on the homogeneous $x,y$ coordinates.
It will be useful to note that, in any characteristic $\neq 2$, the group $T_{48}$ contains an element of order $8$, defined affinely by
\[
\psi: \; (x,y,w) \mapsto (\zeta_8 x, \zeta^3_8 y, -w)
\]
for a primitive 8th roots of unity $\zeta_8$.
Especially, characteristic $3$ leads to 8 singular points 
$P_1,\hdots,P_8
$, 
two  at each root of $x^4-y^4$.
All singular points have type $A_2$, forming a single $\tilde G$-orbit.
On the minimal resolution $\tilde Y$, the exceptional curves $E_i^{\pm}$ above the $P_i$ form two $\tilde G$-orbits distinguished by the sign.

In addition, $Y$ contains 14 pairs of lines (i.e. pairs of smooth rational curves on $Y$ which map to a line in $\PP^2$).
The first 8 pairs, $\ell_i^\pm$, form again two orbits, distinguished by the signs
 starting from the lines $\ell_1^\pm$ given by 
 $$x+y+z=0,
 $$
 say.
 On $\tilde Y$, each line $\ell_i^\bullet$ meets exactly three $E_j^\bullet$ (with the same sign),
 while being disjoint to the line with the opposite sign which also meets the opposite exceptional curves.
Moreover, $\ell_j^\bullet$ meets exactly one other line, with opposite sign.
The other 6 pairs of lines, given by the zeroes of the homogenous form $xy(x^4+y^4)$, will not be relevant to us,
so we only depict the first named 28 smooth rational curves on $\tilde Y$ in Figure \ref{fig}.
Note that, for ease of presentation, the 20 future fibre components (printed in black) appear only once
while the 12 future sections (printed in green) are indicated at each fibre component which they meet.

\begin{figure}[ht!]
\includegraphics[width=0.9\textwidth]{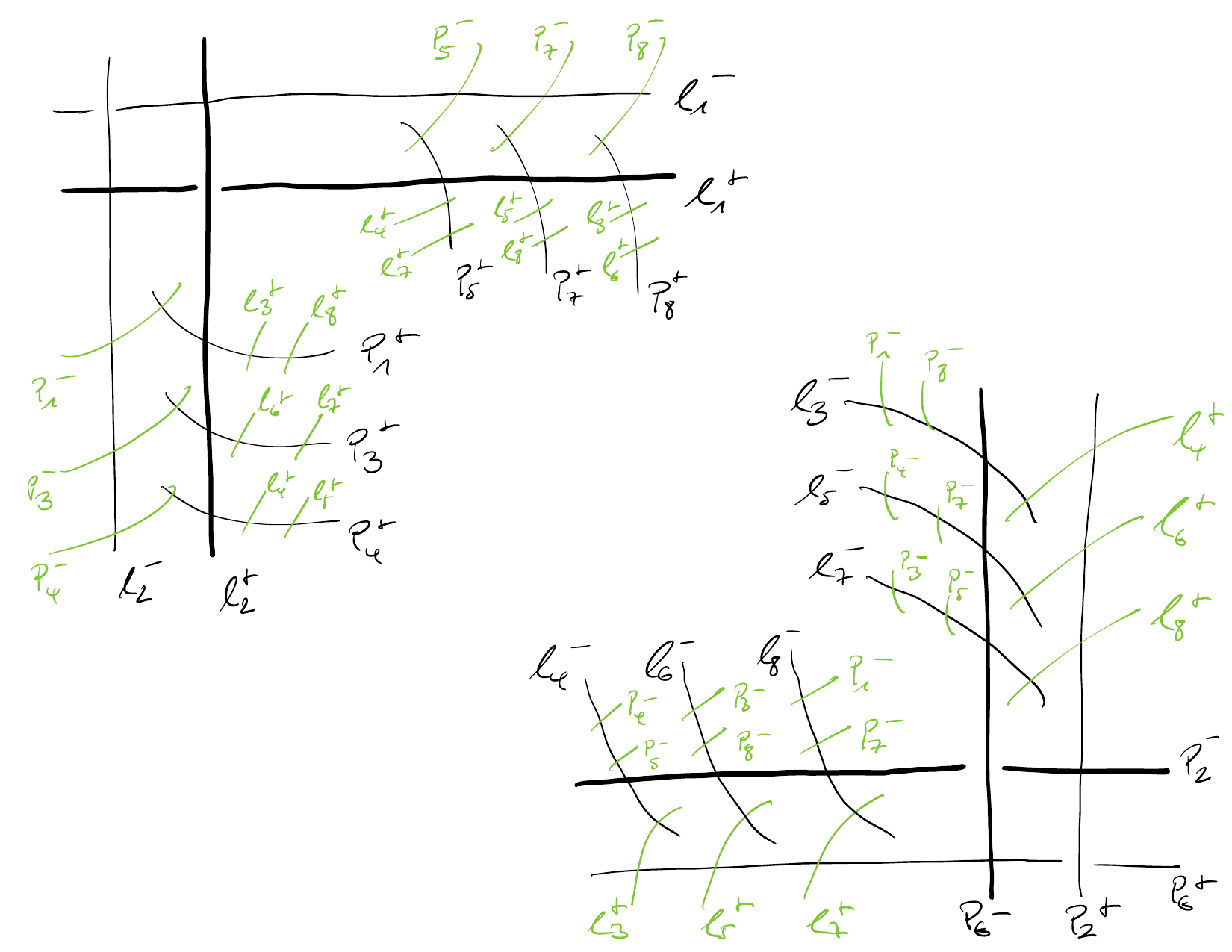}
\caption{32 smooth rational curves $E_i^\pm, \ell_j^\pm$ on $\tilde Y$}
\label{fig}
\end{figure}

\begin{lemma}
\label{lem:Y}
\begin{enumerate}
\item[(i)]
$\tilde Y$ is supersingular of Artin invariant $\sigma_0=1$.
\item[(ii)]
$\NS(\tilde Y)^{\tilde G} \cong \langle 2\rangle \oplus A_2[8]$, generated 
by the pull-back of a hyperplane section and the sums $\sum E_j^\bullet$.
\end{enumerate}
\end{lemma}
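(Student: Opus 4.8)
The plan is to obtain (i) by a reduction argument parallel to Claim \ref{claim:ss}, and (ii) by an explicit computation of three invariant classes whose saturation I control with the help of the model from (i).

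For (i) I would first study the complex double sextic $Y_\CC\colon w^2=xy(x^4+y^4)+z^6$. Carrying the symplectic $T_{48}$-action it has $\rank\NS(Y_\CC)\geq 19$, and I would identify it, via its extra symmetry, as a singular K3 surface of Picard number $20$ whose transcendental lattice is recorded in \cite{Hashi}. This exhibits $Y_\CC$ as a CM (Kummer-type) surface whose associated abelian surface has complex multiplication by an order ramified at $3$; such a surface has potentially good, and necessarily supersingular, reduction in characteristic $3$, forcing $\tilde Y$ to be supersingular of Artin invariant $\sigma_0=1$, hence $\tilde Y\cong X_0$. As a self-contained alternative I would instead read the elliptic fibration off Figure \ref{fig}: the $20$ black curves span the reducible fibres (their Kodaira types being visible in the dual graph), the $12$ green curves give sections whose heights are computed as in Lemma \ref{claim:MWL}, and then the Shioda--Tate formula gives $\rho(\tilde Y)=22$ while the determinant formula \cite[Cor.\ 6.39]{SS} gives $|\det\NS(\tilde Y)|=9$.

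For (ii) I would exhibit three manifestly $\tilde G$-invariant classes: the pullback $h$ of a line from $\PP^2$, with $h^2=2$ and $h\cdot E_i^\bullet=0$, and the two orbit sums $\sum_i E_i^+$ and $\sum_i E_i^-$. Since distinct nodes give disjoint $A_2$-chains while $E_i^+\cdot E_i^-=1$, the orbit sums have Gram matrix
\[
\begin{pmatrix}-16&8\\8&-16\end{pmatrix}=8\begin{pmatrix}-2&1\\1&-2\end{pmatrix},
\]
so the three classes generate $\langle 2\rangle\oplus A_2[8]$, of rank $3$ and determinant $384$. Because $\rank\NS(\tilde Y)^{\tilde G}=3$ (equivalently $\rank\Lambda^{T_{48}}=5$ in \cite{HM290}), this sublattice has finite index in $\NS(\tilde Y)^{\tilde G}$, and it remains to prove the index is $1$. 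Using $\NS(\tilde Y)_{\tilde G}\cong\Lambda_{T_{48}}$ (Theorem \ref{thm:Leech}), whose discriminant carries the full $2$-part $2^7$ by \cite{HM290}, together with $A(\NS(\tilde Y))\cong(\ZZ/3)^2$ from (i), the gluing inside $\NS(\tilde Y)$ forces the entire $2$-part of $A(\NS(\tilde Y)^{\tilde G})$ to match that of $\langle 2\rangle\oplus A_2[8]$; the only ambiguity is at the prime $3$, where one must exclude a $3$-divisible invariant class.

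The main obstacle is precisely this $3$-saturation in (ii): order-counting alone leaves open whether $\NS(\tilde Y)^{\tilde G}$ could contain $\tfrac13$ of an integral combination of the $E_i^\bullet$, which would make our lattice an index-$3$ sublattice. To settle it I would return to the explicit N\'eron--Severi lattice furnished by the fibration in (i) and verify directly that the $A_2^{\oplus 8}$ configuration spanned by the exceptional curves glues in $\NS(\tilde Y)$ so that no such fractional class is integral; this is exactly the sort of $p$-adic discriminant bookkeeping (here at $p=3$) that pervades the paper. By contrast, the reduction route for (i) is reassuring in that it yields $\sigma_0=1$ without the delicate count of $\#\MW_{\mathrm{tors}}$ on which the determinant formula would otherwise hinge.
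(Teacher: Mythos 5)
Your proof follows essentially the same route as the paper: part (i) via the transcendental lattice $A_2[-8]$ of $Y_\CC$ from \cite{Hashi} and the potentially good supersingular reduction argument of Claim \ref{claim:ss} (the paper also records the same kind of explicit alternative you offer), and part (ii) via the Gram matrix of $h$, $\sum E_i^+$, $\sum E_i^-$ and comparison with \cite[Table 1]{HM290}. Your computation $\bigl(\begin{smallmatrix}-16&8\\8&-16\end{smallmatrix}\bigr)=A_2[8]$ and the resulting determinant $-384$ are correct, and your gluing argument at $2$ does pin down the $2$-part. The one misdiagnosis is your claimed ``main obstacle'' at the prime $3$: an index-$3$ overlattice of $\langle 2\rangle\oplus A_2[8]$ would have determinant $-384/9\notin\ZZ$, so since $9\nmid 384$ no $3$-divisible invariant class can exist and the extra $3$-adic bookkeeping you propose is vacuous; the only prime needing attention is $2$, which you have already handled.
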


\begin{proof}
(i)
Since $Y_\CC$ has transcendental lattice $A_2[-8]$ by \cite{Hashi},
this follows as in the proof of Claim \ref{claim:ss}.
(Alternatively, one can verify that the Gram matrix of the given 44 smooth rational curves has rank $22$,
and exhibit a maximal minor of determinant $-9$.)

(ii)
Computing $\tilde G$-invariants, consider the hyperplane section
and the sums $\sum E_i^\bullet$ as stated.
This gives $\NS(\tilde Y)^{\tilde G} \cong \langle 2\rangle \oplus A_2[8]$,
in agreement with \cite[Table 1]{HM290}.
(It will be crucial in what follows that each sum $\sum \ell_i^\bullet$ is $\tilde G$-invariant, too.)
\end{proof}

To connect between the two groups $G_0=H_{192}$ and $\tilde G = T_{48}$,
we work with the isotrivial elliptic fibration \eqref{eq:Km} which we derived from $\tilde X$ in \ref{sss:121}.
On $\tilde Y$, this can be exhibited by identifying the following disjoint divisors of Kodaira type I$_0^*$:
\begin{eqnarray*}
\tilde D_{-1} = E_1^++E_3^++E_4^++\ell_1^-+2\ell_2^+, & & 
\tilde D_\infty = E_5^++E_7^++E_8^++\ell_2^-+2\ell_1^+,\\
\tilde D_1 = E_2^+ +\ell_4^-+\ell_6^-+\ell_8^-+2E_2^- ,&&
\tilde D_0 = E_6^++\ell_3^-+\ell_5^-+\ell_7^-+2E_6^-.
\end{eqnarray*}
This has twelve disjoint sections given by the remaining six exceptional curves $E_i^- (i\neq 2,6)$ 
and six lines $\ell_j^+ (j>2)$.
It follows from Lemma \ref{lem:Y} (i) that the fibration can be converted to the shape of \eqref{eq:Km}
because 
the fibration is automatically isotrivial, leading to Km$(E\times E')$ as before.
This Kummer surface is supersingular if and only if $E$ and $E'$ are,
but there is only one supersingular elliptic curve in characteristic $3$, namely $E_0\cong E\cong E'$.
We then use M\"obius transformations to locate each fibre at the point of $\PP^1$ indicated by the index.
After choosing $P_1^-$ as zero section, there is still enough freedom, using the automorphisms $\varphi_3$ and $\iota$, to arrange
for 
$$
P_5^- = P_{-1}, \;\;  \ell_5^+ = P_0, \;\; \ell_6^+ = P_1.
$$
For  the remaining eight sections,
 we can read off from the fibre components intersected that,
 after composing with the involution of the generic fibre, if necessary,
 \[
 P_3^- = \varphi^* Q \;\;\; \text{ and } \;\;\; P_4^- = -(\varphi^*)^2 Q
 \]
 where the sign follows from the property that $P_3^-, P_4^-$ are disjoint
 (cf.\ the proof of Lemma \ref{claim:MWL}).
 
 \begin{lemma}
 \label{claim:hat_G}
 Under these identifications, let $\hat G=\langle G_0, \tilde G\rangle$. Then
$$
\NS(\tilde X)^{\hat G} = \left\langle \frac 12\sum(P_i^++\ell_i^-),\, \frac 12 \sum(P_i^-+\ell_i^+)\right\rangle
\cong \begin{pmatrix}
-8 & 16\\
16 & -8
\end{pmatrix}.
$$
 \end{lemma}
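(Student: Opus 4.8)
The plan is to use that $\hat G=\langle G_0,\tilde G\rangle$ contains both subgroups, so that
\[
\NS(\tilde X)^{\hat G}=\NS(\tilde X)^{G_0}\cap\NS(\tilde X)^{\tilde G},
\]
an intersection of two lattices whose generators are already at hand: the rank-three lattice $\langle D_1,D_2,D_3\rangle$ from Claim \ref{claim:inv_G_)}, and $\NS(\tilde Y)^{\tilde G}\cong\langle 2\rangle\oplus A_2[8]$ from Lemma \ref{lem:Y}. Accordingly I would first verify that the two proposed classes $v_1=\tfrac12\sum(P_i^++\ell_i^-)$ and $v_2=\tfrac12\sum(P_i^-+\ell_i^+)$ lie in $\NS(\tilde X)^{\hat G}$, then compute their Gram matrix, and finally argue that they already generate the whole invariant lattice rather than a proper sublattice.

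Invariance under $\tilde G$ is immediate, since each of the four sums $\sum P_i^{\pm}$ and $\sum\ell_i^{\pm}$ is an orbit sum for $\tilde G$ and hence $\tilde G$-invariant by Lemma \ref{lem:Y} and the remark in its proof. Invariance under $G_0$ is the delicate part, because $G_0$ was defined on Mukai's model $\tilde X$ from \eqref{eq:X3} rather than on $\tilde Y$; here I would use the identification of the two surfaces through the isotrivial fibration \eqref{eq:Km} fixed in \ref{sss:112} and track the action of the generators $\varphi_0,\varphi_1,\varphi_2,\varphi_3$ together with the $2$-torsion translations generating $N$ on the curves $P_i^{\pm},\ell_j^{\pm}$, checking that each preserves the two displayed combinations. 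For the Gram matrix I would read off the incidences in Figure \ref{fig}: the $16$ curves $P_i^+,\ell_i^-$ are mutually disjoint $(-2)$-curves (and likewise the $P_i^-,\ell_i^+$), giving $v_1^2=v_2^2=\tfrac14\cdot 16\cdot(-2)=-8$; for the off-diagonal entry one adds the adjacencies $P_i^+\!\cdot P_i^-=1$ (the two components of each $A_2$-chain), the same-sign line–curve incidences $\sum P^+\!\cdot\sum\ell^+=\sum\ell^-\!\cdot\sum P^-=8\cdot 3$, and the single incidence between oppositely signed lines, obtaining $v_1\cdot v_2=\tfrac14(8+24+24+8)=16$. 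This yields the asserted matrix, of determinant $-192$.

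It remains to show that $v_1,v_2$ generate all of $\NS(\tilde X)^{\hat G}$. The invariant lattice has rank exactly $2$: it contains the independent classes $v_1,v_2$, while it cannot have rank $3$, since that would force $\NS(\tilde X)^{G_0}=\NS(\tilde X)^{\tilde G}$, whereas e.g.\ $D_3$ is readily checked not to be $\tilde G$-invariant. Saturation then follows by matching the determinant $-192$ of $\langle v_1,v_2\rangle$ against the invariant lattice of $2^2.\mathfrak A_{4,4}$ (no.\ 112) recorded in \cite[Table 1]{HM290}, which forces the index to be one; alternatively one computes the intersection $\NS(\tilde X)^{G_0}\cap\NS(\tilde X)^{\tilde G}$ directly in the span of the rational curves. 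I expect the genuine obstacle to be the $G_0$-invariance step: it requires transporting the action of $G_0$, originally given on $\tilde X$, faithfully onto the $32$ curves $P_i^{\pm},\ell_j^{\pm}$ of $\tilde Y$ through the M\"obius normalization fixed above, and it must be accompanied by a check that the half-sums $v_1,v_2$ are genuinely integral classes of $\NS(\tilde X)$.
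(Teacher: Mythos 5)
Your overall strategy is the paper's: identify $\NS(\tilde X)^{\hat G}$ with $\NS(\tilde X)^{G_0}\cap\NS(\tilde X)^{\tilde G}$ and exploit the explicit descriptions from Claim \ref{claim:inv_G_)} and Lemma \ref{lem:Y}. Your Gram matrix computation is correct (and more detailed than the paper's). But the step you flag as ``the genuine obstacle'' --- transporting the $G_0$-action onto the $32$ curves and checking invariance of $v_1,v_2$ generator by generator --- is exactly the point where you miss the observation that makes the paper's proof a one-liner: under the identification fixed in \ref{sss:112}, the $16$ simple components of the four $\I_0^*$ fibres are precisely the $P_i^+$ and $\ell_i^-$, while the four doubled components, the four torsion sections and the eight sections supporting $2D_3$ are precisely the $P_i^-$ and $\ell_i^+$. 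Hence $v_1=D_2$ and $v_2=D_1+D_3$ on the nose, so $G_0$-invariance \emph{and} integrality of the half-sums are immediate from \eqref{eq:inv_div} and Claim \ref{claim:inv_G_)}; no tracking of $\varphi_0,\dots,\varphi_3$ is needed.

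The second issue is your primary saturation argument: matching $\det\langle v_1,v_2\rangle=-192$ against the invariant lattice of $2^2.\mathfrak A_{4,4}$ in \cite[Table 1]{HM290} presupposes that $\hat G$ realizes that group's action, which is precisely what the \emph{subsequent} proposition proves using this lemma --- so as stated this is circular. Your fallback (computing the intersection directly) is the right fix, and with the identification above it is painless: a general element $aD_1+bD_2+cD_3$ of $\NS(\tilde X)^{G_0}$ is $\tilde G$-invariant iff it equals its $\tilde G$-average $b\,v_1+\tfrac{a+c}{2}v_2$ (since $\tilde G$ acts transitively on each of the four sets $\{P_i^\pm\},\{\ell_i^\pm\}$), forcing $a=c$; this gives $\NS(\tilde X)^{\hat G}=\ZZ v_1+\ZZ v_2$ exactly, with no index to worry about. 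Your rank-$2$ argument via the non-invariance of $D_3$ is fine but becomes superfluous once the intersection is computed this way.
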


\begin{proof}
This follows directly from our descriptions of $\NS(\tilde X)^{G_0}$ and $\NS(\tilde Y)^{\tilde G}$,
since the given curves exactly give  fibre components, torsion sections and the sections supporting
the effective divisor $2D_3$.
\end{proof}

\begin{corollary}
Under the above identifications, $\hat G$ is finite.
\end{corollary}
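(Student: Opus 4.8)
The plan is to read finiteness off directly from the invariant lattice computed in Lemma \ref{claim:hat_G}. First I would observe that the Gram matrix appearing there has determinant $(-8)(-8)-16^2=-192<0$, so the rank two lattice $\NS(\tilde X)^{\hat G}$ is indefinite, hence of signature $(1,1)$. Since $\NS(\tilde X)\cong\NS(X_0)$ has signature $(1,21)$, it follows that the coinvariant lattice $\NS(\tilde X)_{\hat G}=(\NS(\tilde X)^{\hat G})^\perp$ is negative definite of rank $20$. This is the crucial structural input: $\hat G$ fixes a positive direction and moves only the negative definite complement.

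Next I would promote this to finiteness of the group itself. By \cite{RS81} the representation $\hat G\to O(\NS(\tilde X))$ is faithful, and I claim that the induced representation $\rho\colon\hat G\to O(\NS(\tilde X)_{\hat G})$ is already faithful. Indeed, any $g\in\ker\rho$ acts trivially on $\NS(\tilde X)_{\hat G}$ and, being an element of $\hat G$, also fixes $\NS(\tilde X)^{\hat G}$ pointwise; hence $g$ acts trivially on the finite index sublattice $\NS(\tilde X)^{\hat G}\oplus\NS(\tilde X)_{\hat G}$, therefore on $\NS(\tilde X)\otimes\QQ$, and thus on $\NS(\tilde X)$, so that $g=\mathrm{id}$ by faithfulness.

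Finally I would invoke the classical fact that the orthogonal group of a negative definite lattice is finite, since such a lattice contains only finitely many vectors of each norm. Thus $\hat G$ embeds via $\rho$ into the finite group $O(\NS(\tilde X)_{\hat G})$ and is therefore finite. I do not expect a genuine obstacle: the only point requiring a little care is the faithfulness of $\rho$, i.e.\ checking that triviality on the definite orthogonal complement forces triviality on all of $\NS(\tilde X)$, which is exactly the finite index argument above. Everything else is immediate from Lemma \ref{claim:hat_G} together with the signature $(1,21)$ of $\NS(X_0)$; note in particular that one cannot argue geometrically by averaging an ample class, since that would presuppose the very finiteness we are proving, which is why the lattice-theoretic route through the negative definite coinvariant lattice is the natural one.
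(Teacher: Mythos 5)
Your proof is correct and follows essentially the same route as the paper: the paper likewise notes that $\NS(\tilde X)^{\hat G}$ is hyperbolic, so $\hat G$ preserves a positive class and hence acts faithfully on a negative definite lattice, which forces finiteness. You have merely made explicit the determinant computation ($-192<0$) and the faithfulness of the action on the coinvariant lattice, both of which the paper leaves implicit.
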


\begin{proof}
Since $\NS(\tilde X)^{\hat G} $ is hyperbolic, it follows that $\hat G$ preserves a positive class.
Hence it acts faithfully on some negative definite lattice and thus is finite.
\end{proof}

\begin{remark}
Other identifications lead to $\NS(\tilde X)^{\hat G}$ of smaller rank, possibly even empty,
where thus $\hat G$ is forced to be infinite.
\end{remark}

\begin{proposition}
Under the above identifications, $\hat G = G = 2^2.\mathfrak A_{4,4}$.
\end{proposition}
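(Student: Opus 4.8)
The plan is to determine $\hat G$ by a sandwich argument: the inclusions $H_{192}=G_0\subseteq\hat G$ and $T_{48}=\tilde G\subseteq\hat G$ provide a lower bound, while the invariant lattice computed in Lemma~\ref{claim:hat_G} yields the matching upper bound $\hat G\subseteq 2^2.\mathfrak A_{4,4}$; equality then follows once the two explicit subgroups are seen to generate the whole group.

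For the upper bound, note first that $\hat G$ is finite by the Corollary above, and, being generated by symplectic automorphisms, it acts symplectically on $\tilde X\cong X_0=X_{0,3}$. Theorem~\ref{thm:Leech} therefore realizes $\hat G$ as a subgroup of $\Co_0=O(\Lambda)$ with $\Lambda_{\hat G}\cong\NS(\tilde X)_{\hat G}$; by Lemma~\ref{claim:hat_G} the invariant lattice $\NS(\tilde X)^{\hat G}$ has rank $2$, so $\Lambda_{\hat G}$ has rank $20$ and $\rank(\Lambda^{\hat G})=4$. The decisive step is to identify the genus of $\Lambda_{\hat G}$. Starting from the rank-two invariant lattice of Lemma~\ref{claim:hat_G} (of determinant $-192$) and from $\det\NS(\tilde X)=-9$, Nikulin's gluing theory \cite{Nikulin80} together with Lemma~\ref{-1} forces the coinvariant lattice $\Lambda_{\hat G}\cong\NS(\tilde X)_{\hat G}$ into the genus $8_6^{-2}3^{-1}$. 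This is precisely the coinvariant lattice of entry no.~$112$ in \cite{HM290}, whose associated maximal group is $2^2.\mathfrak A_{4,4}$; since $\hat G$ fixes $\Lambda^{\hat G}$ pointwise and acts on $\Lambda_{\hat G}$, it lies, after a $\Co_0$-conjugation matching the two copies of this coinvariant lattice, inside that maximal group. Thus $\hat G\subseteq 2^2.\mathfrak A_{4,4}$.

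It remains to prove equality, i.e.\ that $\langle H_{192},T_{48}\rangle=2^2.\mathfrak A_{4,4}$. Since $H_{192}$ has index $6$ in $2^2.\mathfrak A_{4,4}$, its proper overgroups have order $384$ or $576$, and I would verify in GAP \cite{GAP}, using the explicit generators of $G_0$ and $\tilde G$ produced above, that their join equals neither of these but all of $2^2.\mathfrak A_{4,4}$. A convenient sanity check is that $\tilde G=T_{48}$ contains the order-$8$ automorphism $\psi$, whereas $H_{192}=2^4{:}D_{12}$ has no element of order $8$ (its quotient $D_{12}$ has exponent $6$ and the kernel $2^4$ is elementary abelian), so $\tilde G\not\subseteq G_0$ and the join is genuinely larger than $H_{192}$. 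Combined with the upper bound, this forces $\hat G=2^2.\mathfrak A_{4,4}$, as claimed.

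The hard part, I expect, is the upper bound rather than the generation step: one must guarantee that no strictly larger finite symplectic group on $X_{0,3}$ shares the rank-two invariant lattice of Lemma~\ref{claim:hat_G}. This is exactly what the genus identification together with the maximality of the H\"ohn--Mason entry no.~$112$ provides, but the delicate point is to check that the coinvariant lattice really lands in the genus $8_6^{-2}3^{-1}$ and not in a neighbouring genus produced by $2$-adic oddity fusion or sign-walking; I would settle this by negating via Lemma~\ref{-1} and comparing with the symbol of $\#112$ in Table~\ref{tab}. As a byproduct this also rules out the a priori possibility that $\hat G$ grows into the larger group $U_4(3)=\mathrm{PSU}(4,\FF_3)$, whose invariant lattice has rank $1$ rather than $2$.
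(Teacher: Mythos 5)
Your overall sandwich strategy (lower bound from $G_0$ and $\tilde G$, upper bound from the rank-two invariant lattice of Lemma \ref{claim:hat_G} via the H\"ohn--Mason classification, then force equality using the order-$8$ element) is the same as the paper's, but both halves of your execution have gaps that the paper's proof is specifically designed to avoid.

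For the upper bound, the paper does \emph{not} pin down the genus of $\Lambda_{\hat G}$. It only records that $\det\NS(\tilde X)^{\hat G}=-192$ lies in the square class of $-3$, and then invokes the already-completed classification of admissible rank-$4$ fixed lattices: exactly two maximal groups have $\det(\Lambda^{G})\equiv 3$ modulo squares, namely $G=2^2.\mathfrak A_{4,4}$ and $3^{1+4}:2.2^2$, and the latter is eliminated because its order $1944$ is not divisible by $|G_0|=192$. Your route through the exact genus $8_6^{-2}3^{-1}$ is strictly harder: the gluing data do not hand you the $2$-adic symbol for free (you flag the oddity-fusion/sign-walking issue yourself but do not resolve it), and even granting the genus you still need to know that no other admissible entry of \cite{HM290} has a coinvariant lattice in that genus and that membership in the genus forces conjugacy into entry no.\ $112$. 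In particular you never confront the actual competitor $3^{1+4}:2.2^2$, which also has rank-$4$ fixed lattice with determinant in the square class of $3$; your closing remark about $U_4(3)$ addresses a non-issue instead.

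For the equality step, the proposed GAP computation of $\langle G_0,\tilde G\rangle$ is not executable as described: $G_0$ and $\tilde G$ are given as automorphisms of two different projective models, identified only through an elliptic fibration, and there is no common finite ambient group in which to form the join until the upper bound is in place — and even then one would need to know explicitly how $\tilde G$ sits inside $2^2.\mathfrak A_{4,4}$. The paper closes this purely group-theoretically: the only proper subgroups of $G$ containing $G_0$ are $G_0$ itself and the index-$2$ subgroup $2^2.(\mathfrak A_4\times\mathfrak A_4)$, and \emph{neither} contains an element of order $8$, whereas $\hat G$ contains $\psi$ of order $8$. Your use of $\psi$ only rules out $\hat G=G_0$; you still need to rule out the intermediate subgroup, and the order-$8$ criterion does that too. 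Incorporating that one extra observation would let you drop the GAP step entirely.
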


\begin{proof}
By Lemma  \ref{claim:hat_G}, the determinant of $\NS(\tilde X)^{\hat G}$ lies in the square class of $-3$.
Among the admissible  subgroups of $O(\Lambda)$ with fixed lattice $\Lambda^G$ of rank $4$,
as classified in the previous sections,
there is only one  maximal group other than $G$ with $\det(\Lambda^G)=3 \mod(\QQ^*)^2$
in \cite[Table 1]{HM290}, namely $3^{1+4}:2.2^2$.
Since its order 1944 is not divisible by $\# G_0=192$, we infer that $\hat G$ can only be $G$ or a subgroup thereof.
Clearly, $\hat G\neq G_0$ since  $\hat G$ contains an element of order $8$ (namely $\psi\in\tilde G$)
while $G_0$ doesn't.
Assuming that $\hat G\subsetneq G$, it follows that
$\hat G$ is a subgroup of index $2$ (since it contains $G_0$).
Since $G$ and also $\hat G$ are subgroups of $M_{23}$ acting with 4 orbits,
these subgroups are known, and indeed $\hat G = 2^2.(\mathfrak A_4\times\mathfrak A_4)$.
But this group does not contain an element of order 8, either.
This contradiction proves that $\hat G=G$.
\end{proof}

\begin{remark}
The above argument provides a recipe
to exhibit actions of all maximal groups from Theorems \ref{thm}, \ref{thm1} explicitly  in a given characteristic.
Indeed, one can not only use Mukai's models from \cite{Mukai}
to get isomorphic supersingular reductions, but also those from \cite{BS} or \cite{BH}, for instance.
\end{remark}

\subsection{Explicit realizations of wild groups in rank 4}
\label{ss:explicit-rk4}

To supplement Table \ref{table6}, we note that the abstract realizations in the given characteristics can also be exhibited explicitly.

\subsubsection{Characteristic $11$}
For 
$L_2(11)$,
an explicit realization 
on the supersingular K3 surface of Artin invariant $\sigma=1$ in characteristic $11$ is given in \cite{DKeum}.

\subsubsection{Characteristic $7$}
\label{sss:7}

For the group $L_2(7)\times 2$, 
one can consider the elliptic fibration
\[
y^2 = x^3 + (t^8+1) x
\]
as in \cite{DKeum}.
In characteristic $7$, the K3 surface (supersingular of Artin invariant $1$)
inherits a  symplectic action of the simple group PSU$(2, \FF_{49})\cong L_2(7)$ induced from the base of the fibration.
The group is in fact to be found in Mukai's list of maximal groups with 5 orbits,
but presently the group action can be extended by translation by the 2-torsion section $(0,0)$
to yield a symplectic group action by $L_2(7)\times 2$ (and its subgroups).

\subsubsection{Characteristic $5$}
\label{sss:5}

Consider the K3 surface given by 
\[
y^2  + (t^6+1)y = x^3.
\]
In characteristic $5$, this comes with a natural symplectic action by the simple group $L_2(5)\cong \mathfrak A_5$
acting on the base $\PP^1$ of the fibration.
Together with the translations by the 3-torsion sections,
this realizes $3 \times  \mathfrak A_5$.
Note that this group admits a degree 2 extension given by composing the involution of the fibres
with the involution of the base $(t\mapsto -t)$;
this yields  the maximal subgroup $\mathfrak A_{3,5} = (3\times \mathfrak A_5):2\subset M_{23}$ from Table \ref{table6}.

\section{Proofs of the main theorems}
\label{s:proofs}

We conclude the paper by collecting the proofs of the theorems from the introduction.

\subsection{Proof of Theorem \ref{thm}}

For K3 surfaces of finite height, 
Theorem \ref{thm} was proved in \cite[Thm.\ 4.7]{DKeum}
(to the extent that, just like in Theorem \ref{thm:Mukai} over $\CC$, only subgroups of $M_{23}$
with 5 orbits may occur).

For supersingular K3 surfaces,
Theorem \ref{thm} follows from  the combination of Theorem \ref{thm:tame},
for groups with orbit length 4, also proving that they can only be realized on $X_{0,p}$,
and Theorem \ref{thm:equiv} which also settles the existence of each group action on $X_{0,p}$.

%
%

\subsection{Proof of Theorem \ref{thm1}}

Theorem \ref{thm1} is a consequence of all the groups from \cite{HM290} 
being excluded or realized (sometimes in speciific characteristics) in the course of this paper
(using the crystalline Torelli Theorem \ref{thm:cTorelli} and the explicit geometric models in Section \ref{s:models},
whenever there is no crystalline Torelli theorem available).
Indeed, 
 when reaching Conclusion \ref{conclusion:all}, 
 we checked the hierarchy of subgroups  continuously, 
 using \cite{Atlas} and the auxiliary files of \cite{HM290},
 to confirm the maximality statement of Theorem \ref{thm1}.
\qed

%
%

\subsection{Proof of Corollary \ref{cor:11}}

By \cite{DK-11}, a wild group $G$ acts symplectically on some K3 surface in characteristic $11$
if and only if $G\subset M_{11}$ or $M_{22}$, in agreement with Corollary \ref{cor}.
(\cite[p.\ 800]{DK-11} states $L_2(11)$ as third maximal group,
but that group is contained in both, $M_{11}$ and $M_{22}$.)

The possible tame groups in characteristic $11$ are classified in Theorem \ref{thm}.
By Theorem \ref{thm1} and its proof,
each tame group is contained in  $M_{11}, M_{22}$ or $2^4:\mathfrak S_{3,3}$
as stated.
\qed

\begin{remark}
It also follows that each finite symplectic group $G$ acting symplectically on some K3 surface in characteristic $11$
admits a symplectic action on $X_{0,11}$.
\end{remark}

\subsection{Proof of Theorem \ref{thm:cohom}}

Theorem \ref{thm:cohom} is an immediate combination of Theorem \ref{thm} and Theorem \ref{thm1},
since the  groups listed separately in Theorem \ref{thm:cohom} (2) -- (4) are exactly the groups
from Theorem \ref{thm1} which are not contained in $M_{23}$.
\qed

\subsection{Proof of Theorem \ref{thm:holds}}

To prove Theorem \ref{thm:holds}, we note that
characteristics $p=2,3,5,7$ are excluded by the condition that $p$ be a non-zero square,
and $p=11$ by the square condition at $2$ or $7$.
Thus we reduce to the case $p=$ char$(k)>11$
whence  it follows from \cite{DKeum} that any finite symplectic group is automatically tame,
and we can apply Theorem \ref{thm}.

To see that the square condition is sufficient, we note that any group $G$ from Table \ref{tab}
has $\det(\Lambda_G)$ (or orbit lengths) a product of  primes $p'\leq 7$.
If these are all squares modulo $p$, then \eqref{eq:Legendre} obviously cannot return $-1$.

To see that the square condition is necessary, 
we simply note that for each prime $p'\leq 7$, there is a group $G'$ in Table \ref{tab}
such that $\det\Lambda_{G'}=p'$ modulo squares (or equivalently, the product of orbit lengths 
is $p'$ times a square, cf.\ Theorem \ref{thm:tame}).
Therefore, if $p'$ were a non-square modulo $p$,
then $G'$ could be realized on $X_{0,p}$ by Theorem \ref{thm}.

Spelling out the square condition, we arrive at the equivalent characterization of the  primes $p$
where Mukai's classification holds true.
\qed

\subsection*{Acknowledgement}
We are grateful to the referees for the thorough comments which greatly helped us improve the paper.
We thank S.\ Brandhorst, I.\ Dolgachev and S.\ Muller for very useful comments.


\begin{thebibliography}{99}

\bibitem{Allcock-memoir}
D. Allcock:
The reflective Lorentzian lattices of rank 3,
Mem. Amer. Math. Soc. {\bf 220} (2012), no. 1033, x+108 pp.

\bibitem{AGM}
D. Allcock, I. Gal and A. Mark:
The Conway-Sloane calculus for 2-adic lattices,
L'Enseignement Math\'{e}matique \textbf{66} (2020), 5-31.


\bibitem{Artin}  
M.\ Artin,
{Supersingular $K3$ surfaces,} Ann. scient. \'Ec. Norm. Sup. (4) {\bf 7} (1974), 543--568.



\bibitem{AM77}
M. Artin and B. Mazur:
Formal groups arising from algebraic varieties,
Ann. Sci. \'{E}c. Norm. Sup\'{e}r. (4) \textbf{10} (1977), 87-131.

\bibitem{BS}
C.\ Bonnaf\'e and A.\ Sarti, 
K3 surfaces with maximal finite automorphism groups containing $M_{20}$,
Ann.\ Inst.\ Fourier {\bf 71} (2021), 711--730. 

\bibitem{BL19}
D. Bragg and M. Lieblich:
Twistor spaces for supersingular $K3$ surfaces,
arXiv:1804.07282v6. 

\bibitem{BH}
S.\ Brandhorst and K.\ Hashimoto,
Extensions of maximal symplectic actions on K3 surfaces, 
Ann. H. Lebesgue {\bf 4} (2021), 785--809.



\bibitem{Charles}
F.\ Charles, 
{The Tate conjecture for K3 surfaces over finite fields},
Invent. Math. {\bf 194} (2013), 119--145.
Erratum:  Invent. Math. {\bf 202} (2015), 481--485. 


\bibitem{Atlas} 
J.\ H.\ Conway et al: Atlas of finite groups, Clarendon Press,
Oxford (1985).


\bibitem{CS99}
J.\ H.\ Conway and N.\ J.\ A.\ Sloane:
Sphere packings, lattices and groups, Third edition,
Grundlehren der mathematischen Wissenschaften 290, Springer-Verlag, New York, 1999.


\bibitem{DKeum}
I.\ Dolgachev and 
JH Keum,
{Finite symplectic groups of automorphisms of K3 surfaces in positive characteristic},
Annals of Math.\ {\bf 169} (2009), 269--313.

\bibitem{DK-11}
I.\ Dolgachev and
JH Keum,
{K3 surfaces with a symplectic automorphism of order 11},
JEMS {\bf 11} (2009), 799--818.



\bibitem{DK}
I.\ Dolgachev and S.\ Kond\=o,
A supersingular $K3$ surface in characteristic 2 and the Leech lattice,
International Mathematics Reserch Notices, 2003/01, 1--23


\bibitem{Durfee77}
A. Durfee:
Bilinear and quadratic forms on torsion modules,
Adv. Math. \textbf{25} (1977), 133--164.



\bibitem{ES}
N.D.\ Elkies and M.\ Sch\"utt,
{Genus 1 fibrations on the supersingular K3 surface in characteristic 2 with Artin invariant 1},
Asian J. Math. {\bf 19} (2015), 555--582.


\bibitem{GHV12}
M. R. Gaberdiel, S. Hohenegger and R. Volpato:
Symmetries of $K3$ sigma models, 
Commun. Num. Theor. Phys. \textbf{6} (2012), 1--50.


\bibitem{GAP}
The GAP Group, GAP --  Groups, Algorithms, and Programming, Version 4.15.1, 2025,
{\url http://www.gap-system.org}.

\bibitem{vdGK}
G.\ van der Geer and T.\ Katsura, 
Relations between some invariants of al-
gebraic varieties in positive characteristic, 
Rend. Circ. Mat. Palermo {\bf 62} (2013),
111--125.


\bibitem{Hashi}
K.\ Hashimoto,
{Finite symplectic actions on the K3 lattice}, 
Nagoya Math.\ J.\ {\bf 206} (2012), 99--153. 

\bibitem{HM290}
G. H\"ohn and G. Mason:
The 290 fixed-point sublattices of the Leech lattice, 
J. Algebra \textbf{448} (2016), 618--637.
Supplementary tables available on journal website.

\bibitem{Huybrechts16}
D.\ Huybrechts,
{On derived categories of $K3$ surfaces, symplectic automorphisms and the Conway group},
Development of moduli theory-Kyoto 2013, 387-405.
Adv. Stud. Pure Math.\ {\bf 69}, Math. Soc. Japan, [Tokyo], (2016).

\bibitem{Jang19} 
J. Jang:
The non-symplectic index of supersingular $K3$ surfaces,
Taiwanese J. Math. \textbf{23}, No. 6, 1327--1338. (2019).

\bibitem{KK}
T.\ Katsura and S.\ Kond\=o,
On Enriques surfaces in characteristic 2 with a finite group of automorphisms,
J. Algebraic Geometry {\bf 27} (2018), 173--202.

\bibitem{KS-Zariski}
T.\ Katsura and M.\ Sch\"utt, 
Zariski K3 surfaces, 
Rev. Mat. Iberoam. {\bf 36} (2020), 869--894. 


\bibitem{KMP16}
W. Kim and K. Madapusi Pera,
$2$-adic integral canonical models, 
Forum Math. Sigma \textbf{4} (2016), 
Paper No. e28, 34 pp.


 \bibitem{Kondo-Mukai}
S.\ Kond\=o,
 {Niemeier lattices, Mathieu groups, and finite groups of symplectic automorphisms of K3 surfaces}
(with an appendix by Shigeru Mukai),
Duke Math. J. {\bf 92} (1998), 593--603. 


\bibitem{Kondo}
S.\ Kond\=o,
Maximal subgroups of the Mathieu group $M_{23}$ and symmetric automorphisms of supersingular K3 surfaces,
International Mathematics Research Notices, 2006, 1--9.

%
%
%


 \bibitem{Kubert}
 D.\ S.\ Kubert, 
 {Universal bounds on the torsion of elliptic curves},
 Proc. London Math. Soc. {\bf 33} (1976), 193--237.
 
%

\bibitem{MP15}
K. Madapusi Pera, 
The Tate conjecture for $K3$ surfaces in odd characteristic,
Invent. Math. \textbf{201} (2015), no. 2, 625-668.

\bibitem{Maulik}
D.\ Maulik,
{Supersingular K3 surfaces for large primes}, 
    Duke Math. J. {\bf 163} (2014), 2357--2425.


\bibitem{Mukai}
S.\ Mukai,
{Finite groups of automorphisms of K3 surfaces and the Mathieu group},
Invent. Math. {\bf 94} (1988), 183--221. 

\bibitem{MM}
L.\ Marquand and S.\ Muller, 
Finite groups of symplectic birational transformations of IHS manifolds
of OG10 type, Forum of Mathematics, Sigma {\bf 13} (2025).


\bibitem{Matsumoto}
Y.\ Matsumoto,
On good reduction of some K3 surfaces related to abelian surfaces, 
Tohoku Math. J. {\bf 67} (2015), 83--104.



\bibitem{Nikulin-finite}
V.\ V.\ Nikulin, 
{Finite automorphism groups of K\"ahler surfaces of type K3}, 
Proc.
Moscow Math. Soc. {\bf 38} (1979), 75--137.


\bibitem{Nikulin80}
V.~V.~Nikulin: 
Integral symmetric bilinear forms and some of their applications (English translation), 
Math.\ USSR Izv., \textbf{14} (1980), 103--167. 

\bibitem{Nygaard79}
N. O. Nygaard:
A $p$-adic proof of the nonexistence of vector fields on $K3$ surfaces,
Ann. of Math. (2) \textbf{110} (1979), no. 3, 515--528.


\bibitem{Nygaard80} 
N. O. Nygaard:
Higher de Rham-Witt complexes of supersingular $K3$ surfaces,
Compos. Math. \textbf{42}, 245--271 (1980).



\bibitem{Ogus79}
A. Ogus:
Supersingular $K3$ crystals,
Journ\'ees de G\'eom\'etrie Alg\'ebrique de Rennes Vol. II, Ast\'erisque {\bf 64}, 3--86 (1979). 

\bibitem{Ogus83}
A. Ogus:
A crystalline Torelli theorem for supersingular $K3$ surfaces,
Arithmetic and geometry II, Progress in Mathematics {\bf 36}, 361-394, Birkh\"auser (1983).

\bibitem{RS76}
A. N. Rudakov and I. R. Shafarevich:
Inseparable morphisms of algebraic surfaces (English translation),
Math. USSR-Izv. \textbf{40} (1976), no. 6, 1205-1237 (1978).

\bibitem{RS}
A. N. Rudakov and I. R. Shafarevich,
{Supersingular K3 surfaces over fields of characteristic 2},
Math. USSR Izv. {\bf 13} (1979), 147--165.



\bibitem{RS81}
A. N. Rudakov and I. R. Shafarevich,
Surfaces of type $K3$ over fields of finite characteristic,
Current problems in mathematics {\bf 18}, Akad. Nauk SSSR, 115--207 (1981).


\bibitem{S-nodal}
M.\ Sch\"utt,
{Divisibilities among nodal curves},
Math. Res. Letters {\bf 25} (2018), 1359--1368.


\bibitem{SS}
M.\ Sch\"utt and T.\ Shioda,
Mordell--Weil lattices,
Erg. der Math. und ihrer Grenzgebiete, 3. Folge, Band {\bf 70}. Springer, 2019.

%
%

\bibitem{Shimada04}
I. Shimada,
Supersingular $K3$ surfaces in odd characteristic and sextic double planes, 
Math. Ann. \textbf{328} (2004), no. 3, 451--468.

\bibitem{Shioda-Alg} 
T.\ Shioda,
An explicit algorithm for computing the Picard
number of certain algebraic surfaces, Am. J. Math. {\bf 108} (1986), 415--432.


\bibitem{MWL} 
T.\ Shioda,
{On the Mordell--Weil lattices},  Comment. Math. Univ. St.
Pauli {\bf 39} (1990), 211--240.  



\bibitem{SI} 
T.\ Shioda and H.\ Inose, 
{On Singular $K3$ Surfaces}, in: W. L. Baily Jr., T. Shioda (eds.), \emph{Complex analysis and algebraic geometry}, 
Iwanami Shoten, Tokyo, and
Cambridge Univ.\ Press, Cambridge (1977), 119--136.

\bibitem{SM} 
T.\ Shioda and N.\ Mitani,
 {Singular abelian surfaces and binary quadratic forms},
 in: \emph{Classification of algebraic varieties
 and compact complex manifolds},
 Lect.~Notes in Math.~{\bf 412} (1974), 259--287.




\bibitem{Tate}
J.\ Tate, 
\emph{Algebraic cycles and the pole of zeta
functions}, in: Arithmetical Algebraic Geometry, 93--110, Harper and Row,
New York (1965).


\bibitem{WZ}
B.\ Wang, Z.\ Zheng,
\emph{Finite groups of symplectic automorphisms of supersingular K3 surfaces in odd characteristic},
preprint (2025), arXiv: 2406.1499v3.



\bibitem{Xiao}
G.\ Xiao,
\emph{Galois covers between K3 surfaces}, 
Ann. Inst. Fourier (Grenoble) {\bf 46} (1996), 73--88.

\bibitem{Zheng}
Z.\ Zheng,
A Lemma on Leech-like Lattices,  preprint (2025), 
arXiv:2507.10414v2.



\end{thebibliography}
\end{document}